\newcommand{\R}{{\mathbb R}}
 \def\F{\mathbf{F}}
 \def\E{\mathbf{E}}
 \def\prox{\mathrm{prox}}
\def\argmin{\mathop{\mathrm{arg min}}}
\providecommand\argmin{\mathop{\mathrm{argmin}}}
\newtheorem{lemma}{Lemma}
\newtheorem{remark}{Remark}
\begin{document}
\title{Deep Unfolding Network for Nonlinear Multi-Frequency\\ Electrical Impedance Tomography}
\author[1]{G.S. Alberti}
\author[2]{D. Lazzaro}
\author[2]{S. Morigi}
\author[2]{L. Ratti}
\author[1]{M. Santacesaria}
\affil[1]{\small MaLGa Center, Department of Mathematics, University of Genova, Via Dodecaneso 35, 16146 Genova, Italy}
\affil[2]{\small University of Bologna, Department of Mathematics, Piazza di Porta S. Donato 5, 40126 Bologna, Italy}
\date{ }


\maketitle

\begin{abstract}
Multi-frequency Electrical Impedance Tomography (mfEIT) represents a promising biomedical imaging modality that enables the estimation of tissue conductivities across a range of frequencies. Addressing this challenge, we present a novel variational network, a model-based learning paradigm that strategically merges the advantages and interpretability of classical iterative reconstruction with the power of deep learning. This approach integrates graph neural networks (GNNs) within the iterative Proximal Regularized Gauss Newton (PRGN) framework. By unrolling the PRGN algorithm, where each iteration corresponds to a network layer, we leverage the physical insights of nonlinear model fitting alongside the GNN's capacity to capture inter-frequency correlations. Notably, the GNN architecture preserves the irregular triangular mesh structure used in the solution of the nonlinear forward model, enabling accurate reconstruction of overlapping tissue fraction concentrations.\end{abstract}

\textbf{Keywords:} Deep Unfolding; multi-frequency Electrical Impedance Tomography; Inverse Problems; Graph Neural Networks; Regularized Gauss-Newton Methods

\section{Introduction}
\label{sec:intro}

Electrical Impedance Tomography (EIT) is a non-invasive medical imaging technique that utilizes a series of electrodes placed on the body’s surface to inject low-amplitude alternating currents at a given frequency, and measure the resulting voltages. From these voltage differences, solving the inverse EIT problem enables the reconstruction of images that map the internal distribution of electrical impedance within the body \cite{adler-boyle-2017}. This impedance distribution reflects variations in tissue properties, such as conductivity and permittivity, which can differ between healthy and pathological tissues.
EIT is particularly valued for its portability, lack of ionizing radiation, and ability to provide real-time functional imaging, making it suitable for applications like monitoring lung ventilation \cite{frerichs2000}, detecting brain abnormalities \cite{TIDSWELL2001283}, or assessing breast tissue \cite{VCherepenin_2001},  and biological tissue imaging \cite{DEAN2008165}.

However, reconstructing the internal impedance from surface voltage measurements is a complex inverse problem \cite{cheney-isaacson-newell-1999,borcea-2002}. It is inherently nonlinear and severely ill-posed, meaning it is highly sensitive to modeling inaccuracies, measurement noise, and even slight errors in electrode placement. This also requires significant computational resources.  

\subsection{State of the art on multi-frequency EIT}

The bioimpedance characteristics of biological tissues, comprising both conductivity and permittivity components, exhibit frequency-dependent variations that correlate with tissue type and underlying physiological or pathological states \cite{DEAN2008165,widlak-scherzer-2012}. 
 
Multi-frequency EIT (mfEIT)  is an advanced extension of EIT that leverages the frequency-dependent electrical properties of biological tissues. 
This frequency-dependent behavior, known as the conductivity spectrum, enables mfEIT to differentiate tissues more effectively and detect subtle biological changes, providing richer diagnostic information.
By injecting currents at multiple frequencies and measuring the resulting voltages through electrodes placed on the body's surface, solving the mfEIT inverse problem allows for the reconstruction of detailed images of the internal impedance distribution. 

The availability of multi-frequency data allows for the use of Multiple Measurement Vector (MMV) approaches, which perform simultaneous reconstructions across all frequencies through multi-task optimization, generating multiple conductivity images concurrently. These methods capitalize on inter-frequency correlations by identifying shared structural features, enabling more accurate characterization of frequency-dependent tissue electrical properties.

These methods can be compared with Single Measurement Vector (SMV) approaches, 
which decompose mfEIT reconstruction into independent single-frequency problems, generating distinct images for each frequency. Although this strategy enables the application of well-established single-frame algorithms \cite{TanyuNingHauptmannJinMaass+2025+437+470, Lazzaro2024OracleNet}, it neglects inter-frequency correlations in the measurement data, potentially compromising the characterization of complex tissue properties.

Our motivating application comes from tissue engineering, and in particular from the study of osteogenic differentiation in cell cultures. In this process, the deposition of calcium and phosphate ions at a cellular scale leads to the formation of mineral deposits, which macroscopically alter the conductivity properties of the tissue. 

We therefore assume the fractional model paradigm \cite{Malone2014}: by averaging at a microscopic scale, we assume that several elementary tissue types coexist at each point of the domain of interest, and the relative concentration of each tissue is represented by space-dependent fractions.
Under the assumption that the frequency-dependent electrical conductivity spectrum (or admittivity spectrum) of each elementary tissue is empirically characterized in advance, or theoretically modeled, this leads to a paradigm shift in the considered inverse problem. 
Indeed, instead of trying to recover an arbitrary conductivity value for every point in the domain and every frequency, the fraction model targets the reconstruction of the fraction of each tissue type. At each domain point, such unknown fractions must satisfy non-negativity and unit sum properties. Finally, to guarantee both an accurate and efficient reconstruction, we assume that the microscopic relative concentrations of each tissue (hence, the unknown fractions) are piecewise constant on a predefined partition of the domain.

\subsection{Contributions}

In the context of the MMV-based methods which follow the fraction-model paradigm, our contribution is twofold.

We first propose a variational method, named Fractional Proximal Regularized Gauss-Newton (FR-PRGN), in which the Entropic Mirror Descent (EMDA) \cite{BT2003} algorithm plays a critical role in enforcing constraints on the tissue fractions. Specifically, EMDA is used in the proximal step of the PRGN to impose the non-negativity and sum-to-one constraints on the fractions. This is effectively implemented through a row-wise softmax operation, ensuring that the reconstructed fractions are always physically consistent with the model assumptions.

As a second main contribution, we introduce a novel variational network, called mf-Net, which unrolls the iterations of the FR-PRGN algorithm into a trainable deep network. At the heart of this learning framework lies the proximal operator. The output of the denoiser, implemented as a Graph Neural Network (GU-Net), is projected onto the constraint space via the softmax operation. This step guarantees that the estimated fractions remain physically plausible by satisfying the intrinsic constraints of the fraction model.

The mf-Net architecture embodies a model-based learning approach that combines the interpretability and robustness of classical iterative reconstruction with the expressive power of deep learning.

\subsection{Related works}

We discuss here some related works on mfEIT that are relevant for our approach.

Malone et al. \cite{Malone2014}  presented a method for mfEIT that leverages spectral constraints to reconstruct the volume fraction distribution of the constituent tissues using a nonlinear approach. This approach was extended in \cite{malone2015} by removing the prior knowledge of the spectral profiles.
Alberti et al.\ in  \cite{alberti-etal-2016} analyzed the linearized inverse problem in mfEIT, discussed reconstruction methods for known, partially known, or unknown spectral profiles, and demonstrated the effectiveness of the multifrequency approach in handling modeling errors, such as imperfectly known boundaries.

Battistel et al. in \cite{Battistel} proposed a mfEIT reconstruction algorithm that incorporates spectral correlation via a tensor product between the spatial variation and the spectral variation. The resulting inverse problem is solved using Tikhonov regularization.

Chen et al. in \cite{MMVNET2023} introduced MMV-Net, a model-based deep learning algorithm designed for the simultaneous reconstruction of mfEIT images. MMV-Net unrolls the update steps of the Alternating Direction Method of Multipliers (ADMM) algorithm tailored to the MMV-based problem. To effectively capture both intra-frequency and inter-frequency dependencies, the nonlinear shrinkage operator in MMV-ADMM is generalized in MMV-Net through a cascade of two modules: a Spatial Self-Attention (SSA) module and a convolutional Long Short-Term Memory (ConvLSTM) module.  

Fang et al. proposed in \cite{fang2024multifrequencyelectricalimpedancetomography} a model-based unsupervised learning framework named Multi-Branch Attention Image Prior for mfEIT image reconstruction. This method employs a custom-designed Multi-Branch Attention Network to represent multiple frequency-dependent conductivity images. 
 
However, the proposals in \cite{alberti-etal-2016}, \cite{Battistel}, \cite{MMVNET2023}, and \cite{fang2024multifrequencyelectricalimpedancetomography} adopted a linearized version of the EIT inverse problem, and consider a simplified case with no overlapping among tissues. 

Our proposal instead is based on the most complete nonlinear EIT forward model. Furthermore, it overcomes the limitation of non-overlapped tissues by incorporating partially or fully overlapped tissues using the fractional model paradigm as in \cite{Malone2014}. 

\subsection{Structure of the paper}

In Section~\ref{sec:problem} we describe the model and the inverse problem under consideration. Section~\ref{sec:proximal} contains the details of FR-PRGN, the variational method that we use to solve the inverse problem. The deep learning version of this algorithm, mf-Net, is discussed in Section~\ref{sec:unfolding}. Extensive numerical experiments are presented in Section~\ref{sec:numerics}. Finally, some concluding remarks are included in Section~\ref{sec:conclusions}.

\section{Problem formulation}\label{sec:problem}

We discuss the model for the frequency-dependent conductivities and for the inverse problem under consideration.

\subsection{Discrete fraction model}
Consider a conductive body represented as a bounded, simply connected Lipschitz domain $\Omega \subset \mathbb{R}^2$ with a piecewise $C^\infty$ boundary $\partial\Omega$. We consider a piecewise constant conductivity, depending on the frequency, namely
\begin{equation}
\sigma(x,\omega) = \sum_{n=1}^N \sigma_n(\omega) \mathbf{1}_{\Omega_n}(x),
    \label{eq:sigma}
\end{equation}
where $\mathbf{1}_{\Omega_n}$, $n=1,\dots,N$, are indicator functions of (known and fixed) subdomains forming a partition $\mathcal{T}=\{\Omega_n\}_{n=1}^N$ of $\Omega$. In practice, this partition may be a triangulation, which is convenient for the numerical implementation, and represents the finest spatial resolution we are able to achieve in the reconstruction of $\sigma$.
In this way, we identify the conductivity at a frequency $\omega$ as a vector $(\sigma_n(\omega))_n \in \R^N$. 

We assume that there are $T$ types of tissues, and that each tissue $j$ has a known conductivity spectrum $\epsilon_j(\omega)$, as a function of the frequency $\omega$.

We rely on the so-called \textit{fraction model} \cite{Malone2014}: namely, we assume that in each subdomain $\Omega_n$, $ n=1,\ldots,N$, each tissue $j=1,\ldots,T$ is present with an unknown fraction $f_{nj}$ such that
\begin{equation}
0\leq f_{nj}\leq 1 \ \ \forall j,n, \quad \sum_{j=1}^T f_{nj} = 1\ \ \forall n.
    \label{eq:frac_constraint}
\end{equation} 
The $NT$ fractions $(f_{nj})$ can be arranged in a matrix
\begin{equation}
    \F = (f_{nj})_{n,j}\in \R^{N\times T}.
\end{equation}
We denote the vectorized version of $\F$ by  $F \in \R^{NT}$, composed of $T$ column blocks $f_j \in \R^N$ corresponding to the original columns, i.e., $[F]_j = f_j$.

In some applications, it is meaningful to assume that only one tissue is present in each subdomain, namely, that all the fractions are either $0$ or $1$. As anticipated in Section \ref{sec:intro}, we are also interested in applications, such as cell calcification in organic samples, in which more tissues are present at a microscopical scale (e.g., calcified and non-calcified cells). When providing a piecewise description of $\sigma$ as in \eqref{eq:sigma}, we neglect the microscopic structure of the medium and replace it with an effective conductivity, acknowledging possible different concentrations in different subdomains.
According to \cite{Malone2014}, the resulting expression of $\sigma_n(\omega)$ in each $\Omega_n$ 
is
\begin{equation}
\sigma_n(\omega) = \sum_{j=1}^T f_{nj} \epsilon_j(\omega), 
\quad n=1,\ldots, N.
\label{eq:sigman}
\end{equation}
The proposed weighted average might be too simplistic in some applications, as it only considers the relative concentrations of the different tissues, neglecting the microscopic structure: a more refined description can be obtained by means of homogenization theory  \cite{cioranescu-donato-1999}. However, it is easy to show that model \eqref{eq:sigman} is accurate in a fully discretized model, as discussed in Remark \ref{rem:averages}.

We stress that, in \eqref{eq:sigman}, the conductivity spectra $\epsilon_j(\omega)$ are assumed to be known, while the fractions $f_{nj}$ are unknown. In other words, compared to other approaches in the literature, our objective is not to uniquely reconstruct the conductivity $\sigma$, but we consider the fractions $F$ as the unknown, leveraging a priori knowledge of the conductivity spectra for each tissue.

We consider only a finite set of frequencies $\omega_i$, $i=1,\ldots,M$. 
Each of the $T$ functions $\epsilon_j(\omega)$ can be represented by a vector $\epsilon_j\in \R^M$, which can be arranged as the rows of the (known) spectra conductivity matrix $\E$ with elements $\epsilon_{ji} = \epsilon_j(\omega_i)$, 
\begin{equation}\label{eq:matE}
    \E = (\epsilon_{ji})_{j,i}\in \R^{T\times M}.
\end{equation}
By \eqref{eq:sigman}, we can summarize the \textit{discrete fraction model} as follows:
\begin{equation}
\sigma_n(\omega_i) = \sum_{j=1}^T f_{nj} \epsilon_{ji}= [\F\E]_{ni} .
    \label{eq:discreteFM}
\end{equation}
Combining this with \eqref{eq:sigma}, we obtain the following expression for the conductivity, seen as a piecewise constant function:
\begin{equation}\label{eq:Ftosigma}
    \sigma_F(\omega_i) = \sum_{n=1}^N [\F\E]_{ni} \chi_n,
\end{equation}
where we have highlighted the dependence on the unknown fractions $F$.

\subsection{The Complete Electrode Model}

The Complete Electrode Model (CEM) is a widely used mathematical framework for accurately describing the relationship between internal conductivity and boundary measurements in EIT \cite{cheney-isaacson-newell-1999,Vetal1999}. 

The key assumption behind CEM is that the current injection and boundary voltage measurements are performed through $P$ electrodes $\{E_p\}_{p=1}^P$, which are open, non-empty, and non-overlapping subsets of $\partial \Omega$. Both the injected current $I$ and the measured boundary voltages $U$ are assumed to be piecewise constant on each electrode, as well as zero-mean: i.e., they belong to $\mathcal{E}_P =\{v\in\R^P:v_1+\dots+v_P=0\}$. 

For a fixed conductivity $\sigma \in L^\infty(\Omega)$ and for a selected injected current $I \in \mathcal{E}_P$, we define the pair $(u,U) \in H^1(\Omega) \times \mathcal{E}_P$ as the solution of the following system of differential equations:
\begin{equation}
\begin{aligned}
-\nabla \cdot (\sigma \nabla u) &= 0 \qquad \text{in} \; \Omega, \\
u + z_p \sigma \frac{\partial u}{\partial n} &= U_p \qquad \text{on} \; E_p, \ p=1,\ldots,P, \\
\int_{E_p} \sigma \frac{\partial u}{\partial n} \, ds &= I_p \qquad p= 1, \ldots, P, \\
\sigma \frac{\partial u}{\partial n} &= 0 \qquad \text{on} \ \partial\Omega \setminus \cup_{p=1}^P E_p.
\end{aligned}
\label{eq:CEM}
\end{equation}
where $z_p$ are the (known) contact impedances of the electrodes and $\partial\Omega \setminus \cup_{\ell=1}^p E_p$ is the part of the boundary without electrodes. The vector $U \in \mathcal{E}_P$ corresponds to the boundary measurements of the voltage $u$ at the $P$ electrodes.

In practice, we do not solve \eqref{eq:CEM} exactly, but we rely on a finite element strategy for its numerical approximation. In doing so, we consider a triangulation $\mathcal{T}$ of $\Omega$ and introduce a space of (globally continuous) piecewise polynomials on it. When restricting the weak formulation of \eqref{eq:CEM} to such functions, the determination of the solution reduces to solving a linear system (see, e.g., \cite{lechleiter,lechleiter2008newton,felisi2024full}). 

Such a process can be iterated for different boundary currents: in particular, in the CEM formulation, the forward problem of EIT consists of determining, given a conductivity $\sigma$, the voltage measurements $U^h$ corresponding to several applied current patterns $I^h$, $h=1,\ldots,H$. Since the map from $I$ to $U$ is linear and the space $\mathcal{E}_P$ is isomorphic to $\R^{P-1}$, it would be enough to consider $H=P-1$ independent current vectors and acquire the corresponding boundary voltages. Nevertheless, it is common practice to probe the body with a larger number of current patterns ($H>P-1$), according to standardized protocols, so as to acquire overdetermined measurements. We concatenate the recorded boundary voltage vectors associated with different currents in a single vector in $\R^K$, being $K=(P-1)H$. This allows us to define the forward map of CEM-EIT, i.e., $v \colon \R^N \rightarrow \R^{K}$ such that
\begin{equation}
v(\sigma) = (U^{h})_{h=1}^H, \ \ (u^h,U^h) \text{ solves \eqref{eq:CEM}-FE with $I=I^h$}, 
    \label{eq:fwdprob}
\end{equation}
where we denote the finite element discretization of the differential problem by \eqref{eq:CEM}-FE.

When considering a frequency-dependent $\sigma$ described via the fraction model \eqref{eq:Ftosigma}, we denote the forward map as
\begin{equation}
v_F(\omega_i) = v(\sigma_F(\omega_i))
    \label{eq:Ftov}
\end{equation}
to stress its dependency (at any frequency) on the true unknown of the inverse problem, namely, the fractions $F$.

\begin{remark}
In the proposed Finite Element approximation of \eqref{eq:CEM}, the conductivity $\sigma$ only plays a role in the quadratic form
\[
\int_\Omega \sigma(x) \nabla u(x) \cdot \nabla v(x) dx = \sum_{n=1}^N [\nabla u]_n\cdot [\nabla v]_n \int_{\Omega_n} \sigma_n(x) dx,
\]
where we used the fact that the unknown and test functions are piecewise linear on the triangulation, thus their gradients are constant on each triangle.
If we assume that each element $\Omega_n$ can be partitioned into $T$ (possibly empty) subdomains $\Omega_{nj}$, each composed of a single tissue, hence associated to a constant conductivity $\epsilon_j$,  the respective integral in the previous summation reads as 
\[
 \int_{\Omega_n} \sigma_n(x) dx = \sum_{j=1}^T \epsilon_j|\Omega_{nj}| = |\Omega_n|\sigma_n, \quad \sigma_n:= \sum_{j=1}^T \epsilon_j\frac{|\Omega_{nj}|}{|\Omega_n|},
\]
which provides a convincing motivation for adopting the fraction model described in \eqref{eq:sigman}, defining $f_{nj}=\frac{|\Omega_{nj}|}{|\Omega_n|}$.
    \label{rem:averages}
\end{remark}

\subsection{Frequency difference and the inverse problem}

Instead of looking directly at the measurements $v_F(\omega_i)\in\R^K$ for several frequencies, it is common practice to use frequency-difference data,
in order to reduce the frequency-independent modeling error (such as uncertainties in the knowledge of $\partial\Omega$ or of the contact impedances), see \cite{Seo_2008,alberti-etal-2016}.  To this aim, we fix a reference frequency $\omega_0$, with associated conductivity spectrum $\epsilon_{j0},$ $j=1,\ldots,T$, for which the measurements are available, and consider the frequency-difference data
\[
\Phi(F) = (v_F(\omega_i) - v_F(\omega_0))_{i=1}^M.
\]
For simplicity of notation, we collect these $M$ vectors in $\R^K$ into a single vector of size $KM$, so that the forward map $\Phi$ is defined as
\[
\Phi \colon \R^{NT}\to \R^{KM},\quad  F\mapsto \Phi(F),
\]
where we recall that the fractions were assembled into a vector $F\in \R^{NT}$.

Let $F^\dagger$ denote the true unknown fraction vector. We consider the following perturbed measurements
\begin{equation}
    y=\Phi(F^\dagger)+\eta,
    \label{eq:IP}
\end{equation}
where $\eta\in \R^{KM}$ represents measurement noise. The inverse problem under consideration consists of the reconstruction of an approximation of $F^\dagger$ from the knowledge of $y$. From the reconstructed fractions, it is possible to obtain an approximation of the
corresponding conductivity distribution by using \eqref{eq:Ftosigma}.
A possible approach for the solution of the inverse problem is to tackle the following nonlinear least squares problem
\begin{equation}
\min_{F \in \Gamma} \frac{1}{2}\| r(F) \|^2,\qquad r(F) = \Phi(F) - y,
    \label{eq:LS}
\end{equation}
where the residual $r$
is minimized among all fractions complying with \eqref{eq:frac_constraint}, namely belonging to the set 
\begin{equation}
\label{eq:gamma}
\Gamma = 
\left\{
F: f_{nj} = [f_j]_n \geq 0 \ \forall n,j, \  \sum_{j=1}^T f_{nj} = 1 \ \forall n 
\right\}.
\end{equation}
Nevertheless, due to the well-known ill-posedness of the inverse EIT problem, the solution of \eqref{eq:LS} would be highly unstable, especially in the presence of noisy measurements. For this reason, in the next section we introduce a suitable regularized version of \eqref{eq:LS}, together with an iterative algorithm capable to handle the non-trivial constraint \eqref{eq:gamma} appearing in it.

For the implementation of gradient-based methods, it will be useful to compute the Jacobian $J_\Phi(F) \in \R^{(M K) \times (T N) } $ of $\Phi(F)$: for this reason, we provide here its explicit expression. 
Such an operator can be seen as a block matrix: for each frequency $i = 1,\ldots,M$ and tissue $j = 1, \ldots, T$ the block $[J_\Phi(F)]_{ij} \in \R^{K\times N}$ is given by
\begin{equation}
\begin{split}
    [J_\Phi(F)]_{ij} &= \frac{\partial (v_F(\omega_i) - v_F(\omega_0))}{\partial f_j} \\
    &= \frac{\partial v(\sigma_i)}{\partial\sigma} \epsilon_{ji}-\frac{\partial v(\sigma_0)}{\partial\sigma} \epsilon_{j0}, 
 \end{split}
\end{equation}
which has been obtained via the chain rule since, by \eqref{eq:discreteFM} and  \eqref{eq:Ftosigma}, we have
\(
\frac{\partial \sigma_i}{\partial f_j} = \epsilon_{ji}.
\)

\section{Proximal Regularized Gauss-Newton (PRGN) for mfEIT reconstruction} \label{sec:proximal}

In this section, we propose a variational strategy to solve the inverse problem of recovering $F^\dag$ from $y$ in \eqref{eq:IP}.

\subsection{FR-PRGN variational method}

The chosen variational formulation for fraction recovery in multi-frequency EIT is based on solving the following regularized version of the least squares problem \eqref{eq:LS}:
\begin{equation}
    \label{eq:functional2}
   \min_{F \in \Gamma}{\mathcal{J}(F)} , \quad \mathcal{J}(F):= 
   \frac{1}{2}\| r(F)\|^2 + \frac{\alpha}{2} \| F - \hat{F}\|^2 + R(F),
\end{equation}
where $R \colon \R^{NT} \rightarrow \R$ is a convex, eventually non-smooth, functional,  $\Gamma$ is defined as in \eqref{eq:gamma}, and we select a known reference $\hat{F} \in \Gamma$, as described in Section \ref{sec:FEST}.

In problem \eqref{eq:functional2}, we consider two different regularization functionals: the convex term $R$ is suitably chosen to capture desirable features of the unknown solution, possibly motivated by prior information. The quadratic term $\frac{\alpha}{2} \| F-\hat{F}\|^2$, instead, is introduced to deal with the ill-conditioning of the EIT inverse problem. The functional
\begin{equation}
\label{eq:falpha_GN}
f_{\alpha}(F) :=\frac{1}{2} \|r(F)\|^2 + \frac{\alpha}{2} \|F - \hat{F}\|^2
\end{equation} 
can be interpreted as a regularized squared residual
\[
f_\alpha(F) = \frac{1}{2}\| r_\alpha(F)\|^2, \quad r_\alpha(F) = \begin{bmatrix}
    r(F) \\ \sqrt{\alpha} (F-\hat{F})
\end{bmatrix} .
\]

The constrained optimization problem in \eqref{eq:functional2} can be reformulated by means of the characteristic function 
\[
\iota_{\Gamma}(F) =  \begin{cases} 0 & \text{if } F \in \Gamma,\\ \infty & \text{if } F \notin \Gamma, \end{cases} 
\] 
obtaining the following unconstrained optimization problem 
 \begin{equation}
    \label{eq:functional55}
   \min_{{F \in \R^{NT}}} f_\alpha(F) + g(F),
\end{equation}
being 
$g(F)=R(F) +\iota_{\Gamma}({F})$. 

We remark that the functional $f_\alpha$ is differentiable with Lipschitz continuous gradient given by $\nabla f_\alpha(F) = J_{r_\alpha}(F)^T r_{\alpha}(F) = J_\Phi(F)^T r(F) + \alpha (F-\hat{F})$, whereas $g$ is a proper, convex, non-smooth function. 
Leveraging these properties, we approximate the solution of  \eqref{eq:functional55} with a Proximal Regularized Gauss-Newton (PRGN) method. The PRGN approach extends the Proximal Gauss-Newton (PGN) method, originally proposed by \cite{salzo2012convergence,Lee2014}, to effectively manage the inherent ill-posedness of the EIT problem \cite{Col2023DeepplugandplayPG}.

It alternates an inexact Newton step with respect to the residual $f_\alpha$ with a scaled proximal step. At this aim, the Hessian matrix at $F^{(k)}$ is approximated by the positive definite matrix 
\begin{equation}
H_k = J_{r_\alpha}(F^{(k)})^T J_{r_\alpha}(F^{(k)}) = J_{\Phi}(F^{(k)})^T J_{\Phi}(F^{(k)}) + \alpha I,
    \label{eq:hessian}
\end{equation}
where $\alpha I$ overcomes the ill-conditioning of the matrix $J_{\Phi}^TJ_{\Phi}$.
Starting from an initial 
value ${F}^{(0)}$, for $k=1,2,\ldots$ the PRGN algorithm reads as 
\begin{align}
z^{(k)} &= F^{(k-1)} - \beta H_{k-1}^{-1}\nabla f_{\alpha}(F^{(k-1)}),
\label{eq:PRGN_1}\\
F^{(k)} &= \prox_{g}^{H_{k-1}} (z^{(k)}) 
\label{eq:PRGN_2}
\end{align}
where $\displaystyle \prox_g^{H}(z) := \argmin_{F \in \R^{NT}} \left\{ \frac{1}{2}\| F - z \|_H^2 + g(F) 
 \right\}$ and $\| z \|_H^2 = \langle Hz,z \rangle$.
The computation of $F^{(k)}$ via \eqref{eq:PRGN_2}  requires the solution of a constrained minimization problem of the following form:
\begin{equation}
\min_{F \in \Gamma} \tilde{\mathcal{J}}(F), \quad \tilde{\mathcal{J}}(F):= \frac{1}{2}\| F - z^{(k)} \|_{H_{k-1}}^2 + R(F).
\label{eq:md1}
\end{equation} 
For $R(F)$ in \eqref{eq:md1}, a simple Tikhonov regularization option is $R(F)=\frac{1}{2}\alpha_{\text{EMDA}}\| F\|^2$.
A closed-form solution of this kind of minimization problems over the
unit simplex is, in general, not available. 
Since $R$ is convex and differentiable, so is also the objective function $\tilde{\mathcal{J}}$; moreover, as the set $\Gamma \subset \R^{NT}$ is closed and convex, we can approximate the solution of \eqref{eq:md1} by applying the Entropic Mirror Descent Algorithm (EMDA) proposed in \cite{BT2003}.

In particular, to employ EMDA, we need first to specify a suitable entropy function $\psi\colon \Gamma \rightarrow \R$, and then, starting from $F^{(k,0)} \in \operatorname{int}(\Gamma)$, computing the following sub-iterates for $\ell=1,\ldots$:
\begin{equation}
F^{(k,\ell)} = \nabla \psi^\star\big( \nabla \psi(F^{(k,\ell-1)})-t_\ell \nabla \tilde{\mathcal{J}}(F^{(k,\ell-1)})\big),
\label{eq:EDA}
\end{equation}
where $\psi^\star$ is the Fenchel conjugate of $\psi$,
\begin{equation*}
\nabla \tilde{\mathcal{J}}(F^{(k,\ell-1)})=H_{k-1}(F^{(k,\ell-1)}-z^{(k)})+ \nabla R(F^{(k,\ell-1)}),
\end{equation*}
and $t_\ell >0$ are step sizes, chosen as $t_\ell=\frac{\sqrt{2\ln T}}{L_{\tilde{\mathcal{J}}}} \frac{1}{\sqrt{\ell}}$ (being $L_{\tilde{\mathcal{J}}}$ an estimate of the Lipschitz constant of $\tilde{\mathcal{J}}$).

A general guideline for the choice of $\psi$ is that the gradient of its conjugate $\nabla \psi^\star$ should play the role of a nonlinear projection operator on $\Gamma$. 
In \cite{BT2003}, the authors propose an entropy function for the case in which $\Gamma$ is the unitary simplex. Adapting their expression, we consider
\begin{equation}
\psi(F) =  \begin{cases}
    \sum_{n=1}^N \sum_{j=1}^T f_{nj} \ln(f_{nj})  &\text{if $F \in \Gamma$,} \\
\infty  &\text{if $F \notin \Gamma$,}
\end{cases}
    \label{eq:entropy}
\end{equation}
where $f_{nj} = [f_j]_n$ denotes the $n$-th component of the $j$-th block (or row) of $F$. It follows that, for $F \in \Gamma$, we have $\nabla\psi(F)=\ln(F) + 1$. 
Two important properties of $\psi$ are collected in the following result, whose proof can be found in the appendix.

\begin{lemma}
\label{lem:entropy}
Let $\psi$ be defined as in \eqref{eq:entropy}. Then:
    \begin{enumerate}[i)]
        \item $\psi$ is strongly convex over $\Gamma$ with respect to the 1-norm, namely, for a positive constant $C$,
\begin{equation}
\langle \nabla \psi(F)- \nabla \psi(G),F-G \rangle \geq C \| F-G\|_1^2 \quad \forall F,G \in \operatorname{int}(\Gamma).
    \label{eq:strongconv}
\end{equation}
        \item the Fenchel conjugate $\psi^\star(F)$ is given by 
\begin{equation}
    \label{eq:entropystar}
\psi^\star(F) = \sum_{n=1}^N \ln \left( \sum_{j=1}^T e^{f_{nj}} \right).
\end{equation}
    \end{enumerate}
\end{lemma}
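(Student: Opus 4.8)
The plan is to exploit the separable structure of $\psi$ across the mesh index $n$. Writing $\Delta_T=\{p\in\R^T: p_j\geq 0,\ \sum_{j=1}^T p_j=1\}$ for the unit simplex, $h(p)=\sum_{j=1}^T p_j\ln p_j$ for the negative Shannon entropy, and $F_n:=(f_{nj})_{j=1}^T\in\R^T$ for the $n$-th row of $F$, we have the decomposition $\psi(F)=\sum_{n=1}^N h(F_n)$ (with the convention $\psi=+\infty$ as soon as some $F_n\notin\Delta_T$), and on $\operatorname{int}(\Gamma)$ the gradient acts row-wise as $[\nabla\psi(F)]_n=\ln(F_n)+\mathbf{1}=\nabla h(F_n)$. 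Both assertions then reduce to the corresponding single-simplex facts, aggregated over $n$.

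For (i) I would first invoke the classical fact that $h$ is $1$-strongly convex on $\Delta_T$ with respect to $\|\cdot\|_1$ (see, e.g., \cite{BT2003}): the Bregman divergence of $h$ is the Kullback--Leibler divergence $D_h(p,q)=\sum_j p_j\ln(p_j/q_j)$, and Pinsker's inequality gives $D_h(p,q)\geq\tfrac12\|p-q\|_1^2$; adding the same bound for $D_h(q,p)$ and using $D_h(p,q)+D_h(q,p)=\langle\nabla h(p)-\nabla h(q),p-q\rangle$ yields $\langle\nabla h(p)-\nabla h(q),p-q\rangle\geq\|p-q\|_1^2$. Applying this row by row,
\[
\langle \nabla\psi(F)-\nabla\psi(G),\,F-G\rangle=\sum_{n=1}^N\langle\nabla h(F_n)-\nabla h(G_n),\,F_n-G_n\rangle\geq\sum_{n=1}^N\|F_n-G_n\|_1^2 .
\]
Since $\|F-G\|_1=\sum_{n=1}^N\|F_n-G_n\|_1$, the power-mean (Cauchy--Schwarz) inequality $\sum_n a_n^2\geq\frac1N(\sum_n a_n)^2$ gives $\langle\nabla\psi(F)-\nabla\psi(G),F-G\rangle\geq\frac1N\|F-G\|_1^2$, so \eqref{eq:strongconv} holds with $C=1/N$.

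For (ii), since $\psi=+\infty$ off $\Gamma$ and both the linear form $\langle G,F\rangle=\sum_n\langle G_n,F_n\rangle$ and the constraint $F\in\Gamma$ decouple over $n$, the supremum defining $\psi^\star$ splits as $\psi^\star(G)=\sum_{n=1}^N h^\star(G_n)$ with $h^\star(g)=\sup_{p\in\Delta_T}\big(\langle g,p\rangle-h(p)\big)$. The inner problem is a smooth concave maximization over the simplex; writing the first-order (KKT) condition with a multiplier $\lambda$ for $\sum_j p_j=1$ gives $g_j-\ln p_j-1-\lambda=0$, hence $p_j=e^{g_j}/\sum_k e^{g_k}$, which lies in $\operatorname{int}(\Delta_T)$, so the non-negativity constraints are inactive and this is the unique maximizer. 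Substituting back, using $\ln p_j=g_j-\ln\sum_k e^{g_k}$ and $\sum_j p_j=1$, gives $h^\star(g)=\sum_j p_j(g_j-\ln p_j)=\ln\sum_k e^{g_k}$; summing over $n$ yields \eqref{eq:entropystar}. As a byproduct, $\nabla\psi^\star$ is the row-wise softmax, which is precisely the nonlinear projection onto $\Gamma$ used in \eqref{eq:EDA}.

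The Lagrangian computation in (ii) and the aggregation step in (i) are routine; the only genuinely non-trivial ingredient is Pinsker's inequality, which underlies the $\|\cdot\|_1$-strong convexity of $h$. The one point that deserves care is that passing from the per-row bound to a global bound measured in the full $\|\cdot\|_1$ norm unavoidably costs a factor $N$ (take $F,G$ differing in all $N$ rows by perturbations of equal $\ell_1$-size to see this is sharp); since the statement only requires \emph{some} positive constant $C$, this is harmless, but it should be made explicit. One should likewise note that the identities $\nabla\psi=\ln(\cdot)+\mathbf 1$ and the monotonicity inequality only make sense on $\operatorname{int}(\Gamma)$, which is exactly where \eqref{eq:strongconv} is asserted.
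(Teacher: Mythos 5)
Your proof is correct and follows essentially the same route as the paper: both arguments exploit the row-wise separability of $\psi$, establish a per-row strong-convexity bound on the simplex, aggregate over the $N$ rows via Cauchy--Schwarz at the cost of a factor $N$ (yielding the same constant $C=1/N$), and compute $\psi^\star$ by the identical Lagrange-multiplier calculation. The only difference is in the per-row ingredient for (i): you invoke Pinsker's inequality in its symmetrized Bregman form, whereas the paper proves the same bound $\sum_j(p_j-q_j)\ln(p_j/q_j)\geq\|p-q\|_1^2$ from scratch via the elementary pointwise inequality $(t-1)\ln t\geq 2(t-1)^2/(t+1)$ followed by Cauchy--Schwarz; your observation that the unconstrained stationary point in (ii) lies in $\operatorname{int}(\Delta_T)$, so the nonnegativity constraints are inactive, is a small point of rigor the paper leaves implicit.
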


We can now provide an explicit formulation of the EMDA algorithm \eqref{eq:EDA} for the problem at hand. Notice that, from \eqref{eq:entropystar},
\[
[\nabla \psi^\star(F)]_{nj} = \frac{e^{f_{nj}}}{\sum_{j'=1}^T e^{f_{nj'}}}, 
\]
which is equivalent to performing, for each position $n$, a soft-max operation across the blocks $F$ (analogously, recalling that $F$ is the vectorized version of a matrix $\F$, a row soft-max operation),  namely:
\[
\begin{gathered}
\nabla\psi^\star(F) = \operatorname{softmax}_r(F), \\
[\operatorname{softmax}_r(F)]_{n,:} = \operatorname{softmax}([F]_{n,:})    .
\end{gathered}
\]
Moreover, since the soft-max operation is invariant by the addition of the same constant to each component, we can replace $\nabla \psi(F)$ in \eqref{eq:EDA} by $\ln(F)$. Finally, given the expression of $\tilde{\mathcal{J}}$ in \eqref{eq:md1}, we can reformulate \eqref{eq:EDA} as follows
\begin{equation*}
F^{(k,\ell)} = \operatorname{softmax}_r\big(\ln(F^{(k,\ell-1)}) 
-t_\ell \nabla \tilde{\mathcal{J}}(F^{(k,\ell-1)})\big) .
\end{equation*}
Equivalently, for each row $n=1,\ldots,N$ and component $j=1,\ldots,T$, EMDA takes the following explicit form
\begin{equation}
\label{eq:itn}
f_{nj}^{(k,\ell)} = \frac{f_{nj}^{(k,\ell-1)} e^{-t_\ell [\nabla \tilde{\mathcal{J}}(F^{(k,\ell-1)})]_{nj}}}{\displaystyle\sum_{j'=1}^T f_{nj'}^{(k,\ell-1)} e^{-t_\ell [\nabla \tilde{\mathcal{J}}(F^{(k,\ell-1)})]_{nj'}}}.
\end{equation}

We summarize in Algorithm \ref{alg:alg1} the proposed algorithm for Fraction Reconstruction by means of PRGN, with approximate proximal computed via EMDA.
\begin{algorithm}[H]
\caption{FR-PRGN}\label{alg:alg1}
\begin{algorithmic}
\State 
\State {\textsc{Input}}\hspace{0.3cm} $y \in \R^{KM}$; $\E \in \R^{T \times M}$, $\alpha,\beta \in \R$, $\hat{F} \in \R^{NT}$, $tol$
\State  
{\textsc{Output}} \hspace{0.3cm} $F \in \R^{NT}$
\State $k=1$
\State \textbf{repeat} 
\Comment{PRGN loop}
\State \hspace{0.5cm}Update $H_{k-1}=J_\Phi(F^{(k-1)})^T J_\Phi(F^{(k-1)}) + \alpha I $
\State \hspace{0.5cm}$z^{(k)} \gets F^{(k-1)}- \beta H_{k-1}^{-1}\nabla f_{\alpha}(F^{(k-1)})$ 
\State \hspace{0.5cm}set $F^{(k,0)}=F^{(k-1)}$
\State \hspace{0.5cm}\textbf{for } $ \ell= 1,...,L$ \Comment{EMDA loop}
\State \hspace{1cm}$
F^{(k,\ell)}\!\gets\!\operatorname{softmax}_r\!\big(\!\ln(F^{(k,\ell-1)}) 
-t_\ell \nabla \tilde{\mathcal{J}}(F^{(k,\ell-1)})\!\big)$
\State \hspace{0.5cm}\textbf{end } 
\State \textbf{until } $|F^{(k)}-F^{(k-1)}| \le tol$
\end{algorithmic}
\label{alg1}
\end{algorithm}

In Algorithm \ref{alg:alg1} the PRGN loop is initialized with $ F^{(0)}$ which has $f_1 = 1_N$ for the background tissue, and 
$[f_2, \ldots, f_T ] = 0$ for all the other $T-1$ column blocks/tissues. The EMDA loop, properly initialized with $F^{(k,0)}=F^{(k-1)}$, requires only a few iterations $L$ to obtain accurate results.
Although a thorough discussion of the convergence properties of Algorithm \ref{alg:alg1} is outside the scope of this work, we remark that, due to the approximate solution of \eqref{eq:PRGN_2} via EMDA, the previous algorithm can be interpreted as an inexact Proximal Gauss-Newton method, for which a local convergence result is discussed, e.g., in \cite{porta2022inexact}.

\begin{figure*}[!t]
    \centering
\includegraphics[width=1\textwidth]{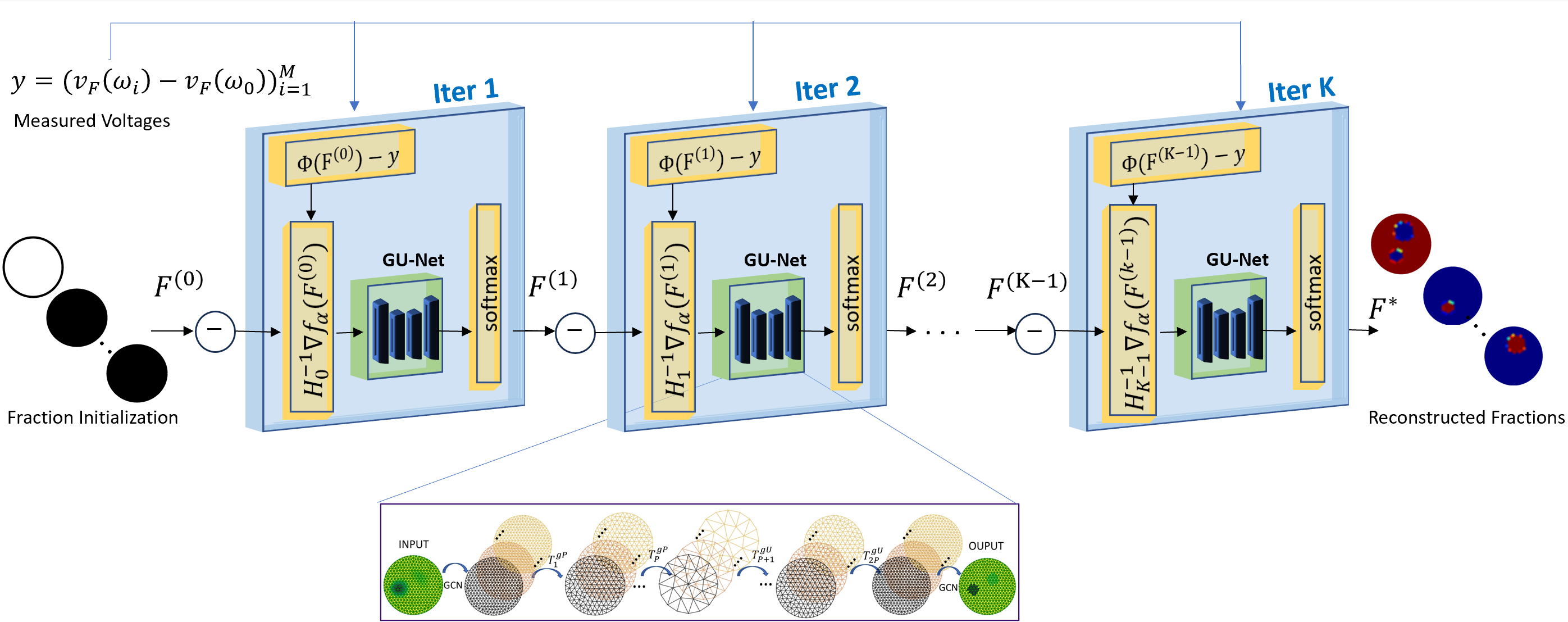} 
\caption{The overall unrolled architecture of the proposed mf-Net with $K$ iterative blocks.}
    \label{fig:unrolled}
\end{figure*}

\subsection{Fraction Estimate (F-EST)}
\label{sec:FEST}

The reference fraction matrix $\hat{F}$ is obtained by minimizing the following regularized functional for $\bar{\F}=[f_2,..,f_T]\in \R^{N\times (T-1)}$ 
\begin{equation}
    \label{eq:functional5}
\mathcal{J}_{\bar{\F}}(\bar{\F}):= \frac{1}{2}\|\bar{\F}\bar{\E}
- {\Sigma} \|_2^2 + \frac{\lambda}{2} \| \bar{\F}\|_2^2,  \end{equation}
where $\lambda >0$, and $\bar{\E}=[\epsilon_2,..,\epsilon_M]\in \R^{(T-1) \times M}$ and $\Sigma \in \R^{N \times M}$ collects the column vectors $\sigma_i - \epsilon_{i,1}$, $i=1,..,M$, where $\sigma_i$ is simply obtained by NOSER estimates \cite{noser}. Then  $\hat{\F}=
[ \mathbf{1}_N -\sum_{j=2}^T f_j,\bar{\F}]$.
Minimizing functional \eqref{eq:functional5}, which reads as
$$
\mathcal{J}_{\bar{\F}}(\bar{\F})= \frac{1}{2}( \bar{\E}^T \bar{\F}^T \bar{\F}\bar{\E} - 2 {\Sigma}^T \bar{\F} \bar{\E} 
+ {\Sigma}^T{\Sigma}) + \frac{\lambda}{2} \bar{\F}^T\bar{\F},
$$
involves calculating its gradient and then applying the first-order optimality conditions.
In particular
\begin{equation}
\label{eq:oc}
\begin{aligned}
    \nabla{\mathcal{J}_{\bar{\F}}}(\bar{\F}) &= \bar{\F}(\bar{\E}\bar{\E}^T +\lambda I )-
    {\Sigma} \bar{\E}^T =0,     
\end{aligned}
\end{equation}
leads to the computations of $N$ linear systems with shared (full rank) coefficient matrix, unknown column vectors $(\bar{\F}^T)_i$ and right-hand side $(\bar{\E}\Sigma^T)_i, i=1,..,N$.

\section{Unfolding PRGN for Fraction Reconstruction in mfEIT}\label{sec:unfolding}

We propose a variational network, named mf-Net, which unrolls the iterations of the FR-PRGN Algorithm 1 into a deep neural network that is based on a graph convolutional neural network, introduced in Section \ref{sec:GUNET}. 

\subsection{The network mf-Net}
Specifically, mf-Net alternates between two main iterative steps in Algorithm 1: the gradient descent step \eqref{eq:PRGN_1}, and the proximal step \eqref{eq:PRGN_2}, which will be the main core of the learning proposal.

The proximal operator \eqref{eq:PRGN_2} of the convex function $g$ at a point $z$ is the solution of the constrained minimization problem \eqref{eq:md1} 
which can be interpreted as a regularized denoising operator in $H$-norm. 
To ensure that the solution $F$ belongs to the set $\Gamma$, the denoising result is then projected onto $\Gamma$ via a {\tt softmax()} operator. 
The {\tt softmax()} is not a projection itself, but can be seen as an approximation of the nonlinear projection of a vector onto the probability simplex: for each mesh node, the tissues sum up to one, and are all positive. 

The graph neural network GU-Net plays the role of the learned denoiser in $\R^{NT}$, the result of which is then projected into the $\Gamma$ space.

The unrolling mfNet framework relies on the architecture illustrated in Fig. \ref{fig:unrolled}. Starting from an initial fraction guess ${F}^{(0)}$ and the collection of the voltage measurements stacked in the vector $y$, each iteration block $k$ is outlined in Algorithm \ref{alg2}.
The output $F^{(k)}$ of the iteration block $k$ is used as an input for the following one.

After stacking \texttt{K} such blocks, the resulting network is trained end-to-end using $N_s$ training samples $\{(y_i,F^\dag_i)\}_{i=1}^{N_s}$, being $y_i = \Phi(F^\dag_i)+\eta_i$, through back-propagation and ADAM optimizer. 
Let $F_{\Theta}^*(y)$ be the output of the mfNet; then, the loss function to be minimized reads as:
\[
\mathcal{L}(\Theta)= \frac{1}{N_s}\sum_{i=1}^{N_s} \| F_{\Theta}^*(y_i) -F^\dag_i \|^2. 
\]
The following part discusses the details of the GU-Net denoiser.

\subsection{GU-Net architecture}
\label{sec:GUNET}

In the following, we describe the GU-Net denoiser, proposed in \cite{Col2023DeepplugandplayPG} for the solution of the EIT inverse problem, which extends the CNN-based denoiser to non-Euclidean manifold domain and relies on the encoder-decoder Graph-U-Net architecture introduced in \cite{gao2019}.
 It is based on a convolution graph kernel and the gPool and gUnpool operators. The pool (gPool) operation samples some nodes to form a smaller graph based on their scalar projection values on a trainable projection vector. 
As an inverse operation of gPool, the unpooling (gUnpool) operation restores the graph to its original structure with the help of locations of nodes selected in the corresponding gPool \cite{gao2019}. 

The GU-Net denoiser with $D$ layers can be formalized as a composition of functions $\mathcal{G} = T_D \circ \cdots \circ T_1$, where $(T_\ell )_{1 \le \ell \le D}$ are the layers of the network. At this aim, let us define an input mesh with $N$ vertices, characterized by the adjacency and degree matrices $A,B \in \R^{N \times N}$, respectively, and a graph convolutional operator defined as
\[
GCN(X;\Theta):=B^{-1/2} A B^{-1/2} X \Theta,
\]
which incorporates the trainable weights $\Theta$. 

Each layer is characterized by the composition of the graph convolution GCN, a ReLU activation function $s$, and a gPool/gUnpool operator, here denoted by a generic $p$, which acts only on the vertices of the mesh, and is applied to the feature array $X$, namely
\begin{equation}
\label{eq:layer}
T_\ell: X \mapsto s(GCN(p(X);\Theta_{\ell})),
\end{equation}
with $X \in \R^{N \times n_c}$ for $\ell=1$, $X \in \R^{N_{\ell} \times h_f}$ for $\ell=2,\ldots,D-1$, and $X \in \R^{N \times n_c}$ for $\ell=D$, $h_f$ the number of hidden features.
The set of all the trainable weights of the GU-Net denoiser is given $\Theta=\{\Theta_\ell\}_{\ell=1}^D$, with $\Theta_{1}  \in \R^{n_c \times h_f}$, $\Theta_{\ell}  \in \R^{h_f \times h_f}$, $\ell=2,\ldots,D-1$,
$\Theta_{D} \in \R^{h_f \times n_c}$.

The resulting network is an autoencoder defined by the encoder-decoder structure as follows:
\begin{equation}
\mathcal{G} = \underbrace{T_{2P}^{gU} \circ \cdots \circ T_{P+1}^{gU}}_{decoder} \circ \underbrace{T_P^{gP} \circ \cdots \circ T_1^{gP}}_{encoder},
\label{eq:Gcomp}
\end{equation}
where in $T_\ell^{gP}$, $p(X) = gPool(X)$ for $\ell=1,\ldots,P$, and $T_\ell^{gU}$, $p(X) = gUnPool(X)$ for $\ell=P+1,\ldots,2P$, with number of layers $D=2P$. In Fig.\ref{fig:unrolled}, a representation of the GU-Net is zoomed in on the box below Iter 2. In our mf-Net, the number of input/output features is the number of tissues, i.e., $n_c = T$.

\begin{algorithm}[H]
\caption{Iter $k$: mf-Net}\label{alg:alg2}
\begin{algorithmic}
\State 
\State {\textsc{Input}}\hspace{0.3cm} $y\!\in\!\R^{KM}$; $\E\!\in\!\R^{T \times M}$, $\alpha,\!\beta\!\in\!\R$, $\hat{F}\!,\!F^{(k-1)}\!\in \!\R^{NT}$
\State  
{\textsc{Output}} \hspace{0.3cm} $F^{(k)} $
\State \hspace{0.5cm}{Update Metric $H_{k-1}=J_{r_{\alpha}}(F^{(k-1)})^T J_{r_{\alpha}}(F^{(k-1)}) +\alpha I $}
\State \hspace{0.5cm}$ z^{(k)} \gets F^{(k-1)} - \beta H_{k-1}^{-1}\nabla f_{\alpha}(F^{(k-1)})$
\State \hspace{0.5cm}$F^{(k)} = \mathrm{softmax}(\mbox{GU-Net}(z^{(k)}))$
\State
\textbf{ end } 
\end{algorithmic}
\label{alg2}
\end{algorithm}

\section{Experimental Evaluation}\label{sec:numerics}
This section details the numerical experiments conducted to evaluate our proposed approaches: the variational FR-PRGN method, and the variational network, mf-NET. 
In Section \ref{sec:ID}, the implementation details are reported; the employed datasets are described in Section \ref{sec:DS}; comparisons and ablation studies are detailed in Sections~\ref{sec:RE} and \ref{sec:AS}, respectively.

In order to evaluate the quality of the reconstructions, we consider two metrics. 
The first, Err$_{\sigma_i}$, is the $L_2$-norm of the relative error between the recovered conductivity $\sigma(\omega_i)$ and the corresponding ground truth phantom $\sigma^{GT}(\omega_i)$ for each frequency $\omega_i, i=1,\dots,M$.
The second, Err$_{f_i}$, is the fraction relative recovery error given by the $L_2$ norm between the approximated fraction solution and the reference (ground truth) fraction. 

\subsection{Implementation Details}
\label{sec:ID}

In the computation of the forward map $\Phi(F)$ we applied the KTCFwd forward solver, a two-dimensional version of the FEM described in \cite{Vetal1999}, kindly provided by the authors\footnote{\texttt{\url{https://github.com/CUQI-DTU/KTC2023-CUQI4}}}, which is based on a FEM implementation of the CEM model on triangle elements. The electric potential is discretized using second-order polynomial basis functions, while the conductivity is discretized on the nodes using linear basis functions on triangular elements.

The training of mf-Net has been performed with the ADAM optimizer \cite{Adam}, using a learning rate equal to $1.0 \times 10^{-3}$ through $1000$ epochs with a mini-batch size of $10$ instances. 
 
The free parameters of the FR-PRGN method (see Algorithm \ref{alg:alg1}) are set to $\alpha = 1.0 \times 10^{-9}$ and $\beta=0.3$, and we selected $R(F) = \frac{1}{2}\alpha_{\text{EMDA}}\|F\|^2$ with $\alpha_\text{EMDA}=1.0 \times 10^{-4}$, setting $L_{\tilde{\mathcal{J}}}=1.5$. We implemented Algorithm 1 from \cite{Malone2014}, selecting $\tau = 1.0 \times 10^{-4}$ and fixing $\beta^t = 0.3$ at each iteration.

The system was implemented leveraging PyTorch 2.5.1, Python 3.11.2, and PyTorch Geometric 2.6.1. The experiments were conducted on an Intel i9 workstation running Linux, equipped with an NVIDIA RTX 4090 GPU.
The mf-Net architecture's baseline configuration, featuring \texttt{Iter Block K}$=9$, $h_f=64$, and a GU-Net \texttt{Depth} of $3+3$, was used for the experiments in Section \ref{sec:RE}. Our choice of these parameters is justified by the ablation study presented in Section \ref{sec:AS}.

\subsection{Simulation Datasets}
\label{sec:DS}

We created two {\em training datasets}: the \texttt{No-Overlap} dataset, which contains samples with no overlapping among tissues, and the \texttt{Overlap} dataset, where all samples can present non-trivial partial or whole intersection between tissues, with randomly chosen radii and centers. 

Each dataset contains 100 samples with two/three circular inclusions, which represent different tissues contained in a saline solution ($T=3$ for \texttt{Overlap}, $T=3$ or $T=4$ for \texttt{No-Overlap}). The {\em testing datasets} for both \texttt{No-Overlap} and \texttt{Overlap} datasets consist of 50 samples each.

The spatial domain $\Omega$  is a circular tank discretized by a triangulation $\mathcal{T}$ composed of 432 vertices.
The measurements are taken with a circular 32-electrode EIT sensor at 2 different frequencies and an additional reference frequency $\omega_0$. For each sample in the dataset and for each frequency, a pair of voltage-conductivity has been synthetically generated through FEM simulation, obtaining a total of $2\times 100$ simulated data. We adopted the adjacent-adjacent measurement strategy. 

The experimental data follows the experimental design described in \cite{SpectraTissue}, by considering as circular inclusions carrot, potato and cucumber tissues in a saline background. 

For the \texttt{Overlap} dataset, the conductivity spectral data for saline ($t_1$), carrot ($t_2$) and cucumber ($t_3$) tissues are defined with reference frequency $\omega_0=1KHz$ and $M=2$  frequencies  $\omega_1=5KHz$, $\omega_2=50KHz$.
The matrix $\E\in \R^{3 \times 2 }$ in \eqref{eq:matE} is composed by two columns $\epsilon^1= [0.13, 0.043, 0.066]^T$, $\epsilon^2= [0.13, 0.150, 0.181]^T$, and, for the reference frequency, the conductivity spectra are given by $\epsilon^0= [0.13, 0.034, 0.048]^T$.

For the \texttt{No-Overlap} dataset, the conductivity spectral data for saline ($t_1$), carrot ($t_2$), cucumber ($t_3$) and potato ($t_4$) tissues are defined with reference frequency $\omega_0=1KHz$ and $M=2$  frequencies  $\omega_1=100KHz$, $\omega_2=1000KHz$.
The matrix $\E\in \R^{4 \times 2 }$ in \eqref{eq:matE} is composed column-wise by $\epsilon^1= [0.13, 0.175, 0.250, 0.130]^T$, $\epsilon^2= [0.13, 0.310, 0.405, 0.230]^T$, and $\epsilon^0= [0.13, 0.100, 0.023, 0.008]^T$ for the reference frequency $\omega_0$. 

The estimated fractions $\hat{F}$ have been computed for all the experiments by applying the Algorithm F-EST (Section~\ref{sec:FEST}), which relies on the solution of the linear systems \eqref{eq:oc}.

All the methods have been initialized with a fraction matrix $\F^{(0)}$ with columns $f_1^{(0)}= {\mathbf 1}_N$, $f_2^{(0)}= {\mathbf 0}_N$,\ldots,  $f_T^{(0)}= {\mathbf 0}_N$, perturbed by additive white Gaussian noise $\mathcal{N}(0,1)$.

\begin{figure*}[h]
    \centering
    \begin{tabular}{c|ccc|cccc}
    Sample & 3 & 23 & 46 & 47 & 0 & 17 &38  \\
    \hline
    GT & 
\includegraphics[width=0.07\textwidth]{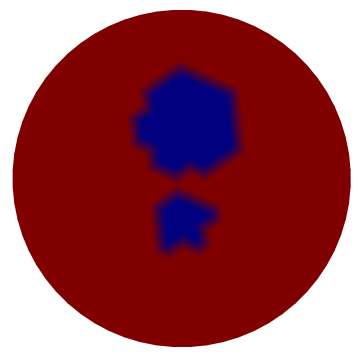} & \includegraphics[width=0.07\textwidth]{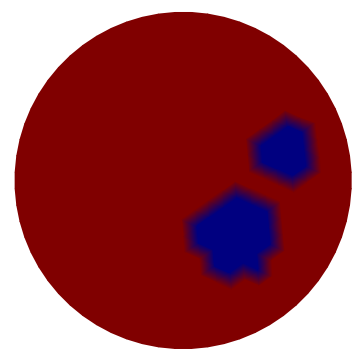} & \includegraphics[width=0.07\textwidth]{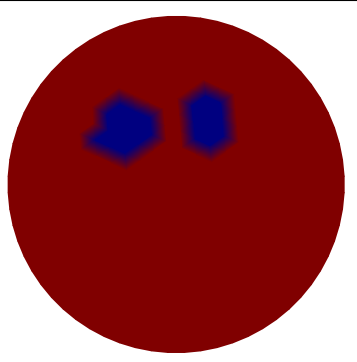} & 
\includegraphics[width=0.07\textwidth]{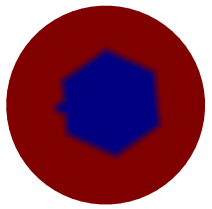} &
\includegraphics[width=0.07\textwidth]{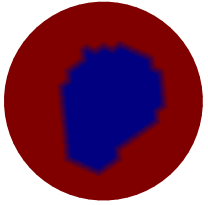} &
\includegraphics[width=0.07\textwidth]{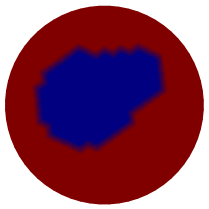} &
\includegraphics[width=0.07\textwidth]{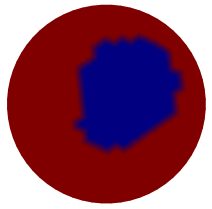} 
\\
& \includegraphics[width=0.07\textwidth]{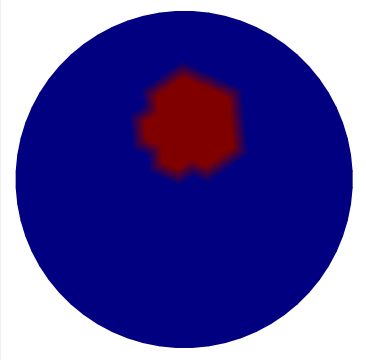} & 
\includegraphics[width=0.07\textwidth]{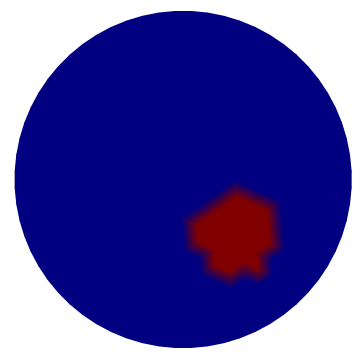} & 
\includegraphics[width=0.07\textwidth]{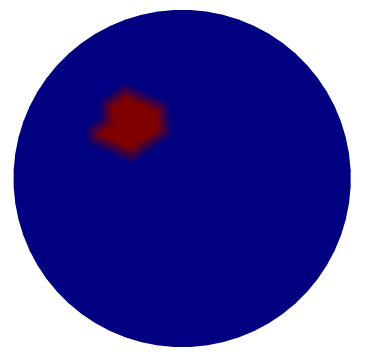} & 
\includegraphics[width=0.07\textwidth]{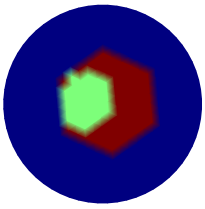}&
\includegraphics[width=0.07\textwidth]{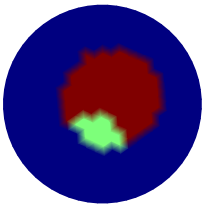} &
\includegraphics[width=0.07\textwidth]{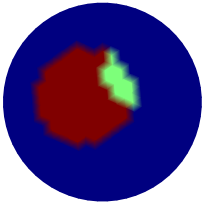} &
\includegraphics[width=0.07\textwidth]{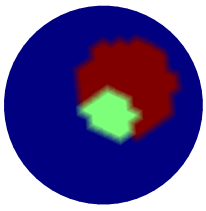} 
\\
& \includegraphics[width=0.07\textwidth]{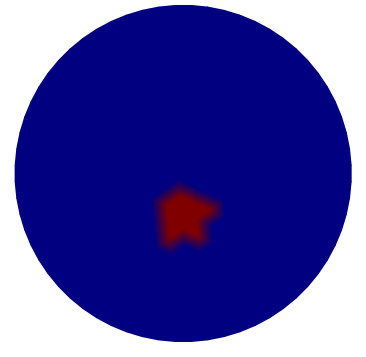} & 
\includegraphics[width=0.07\textwidth]{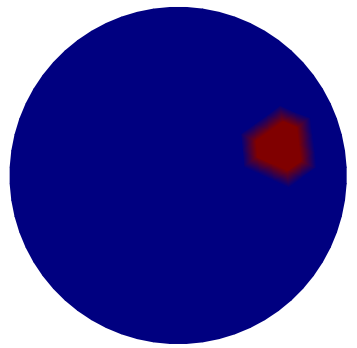} & 
\includegraphics[width=0.07\textwidth]{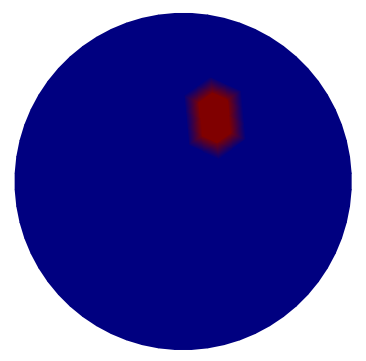} & 
\includegraphics[width=0.07\textwidth]{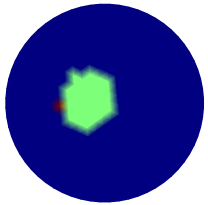}&
\includegraphics[width=0.07\textwidth]{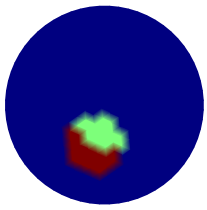} &
\includegraphics[width=0.07\textwidth]{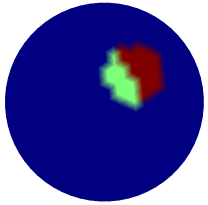} &
\includegraphics[width=0.07\textwidth]{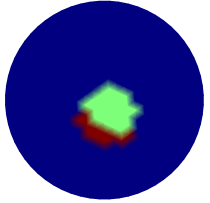} 

\\
\hline
mf-NET  &
\includegraphics[width=0.07\textwidth]{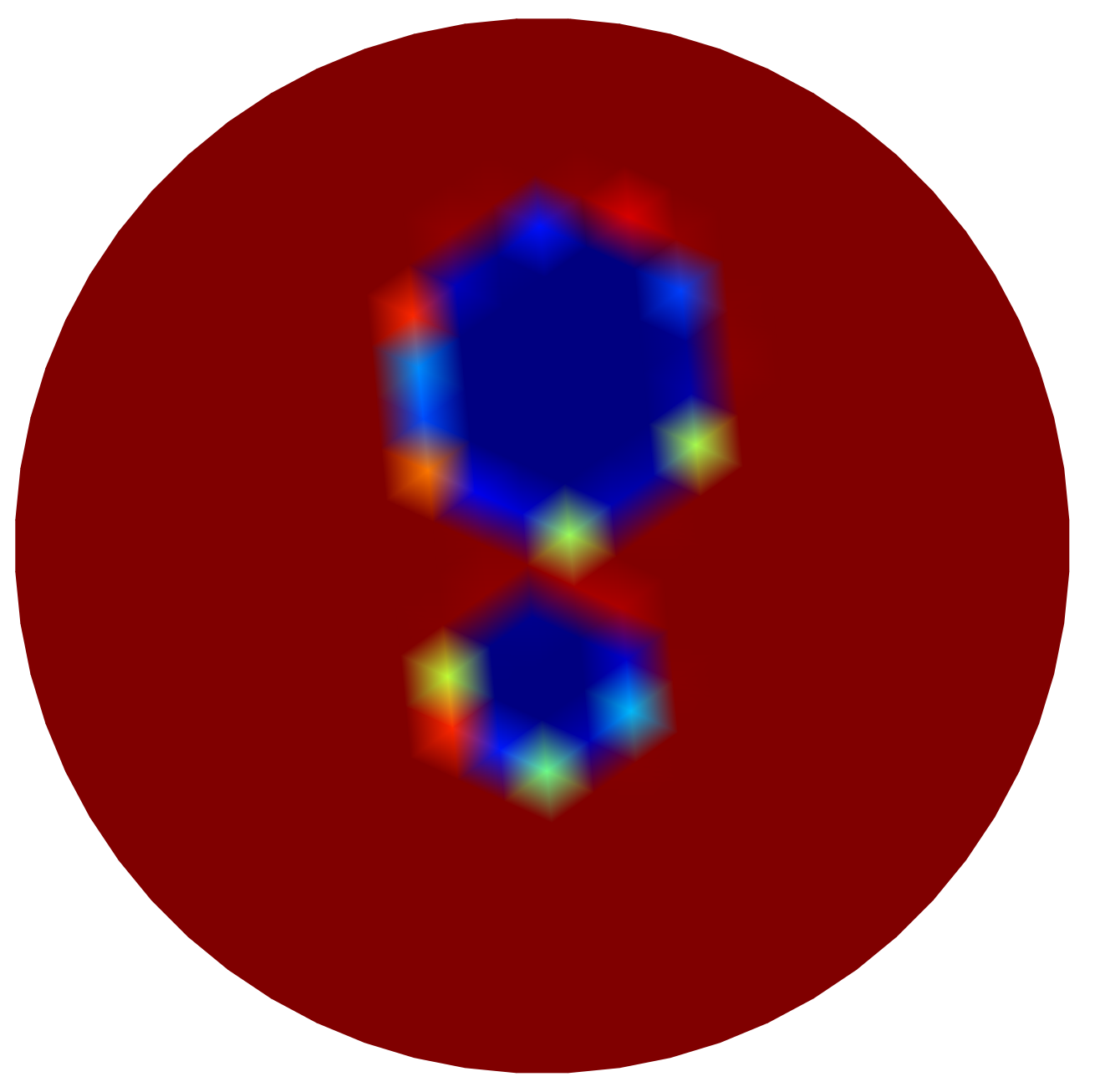} & 
\includegraphics[width=0.07\textwidth]{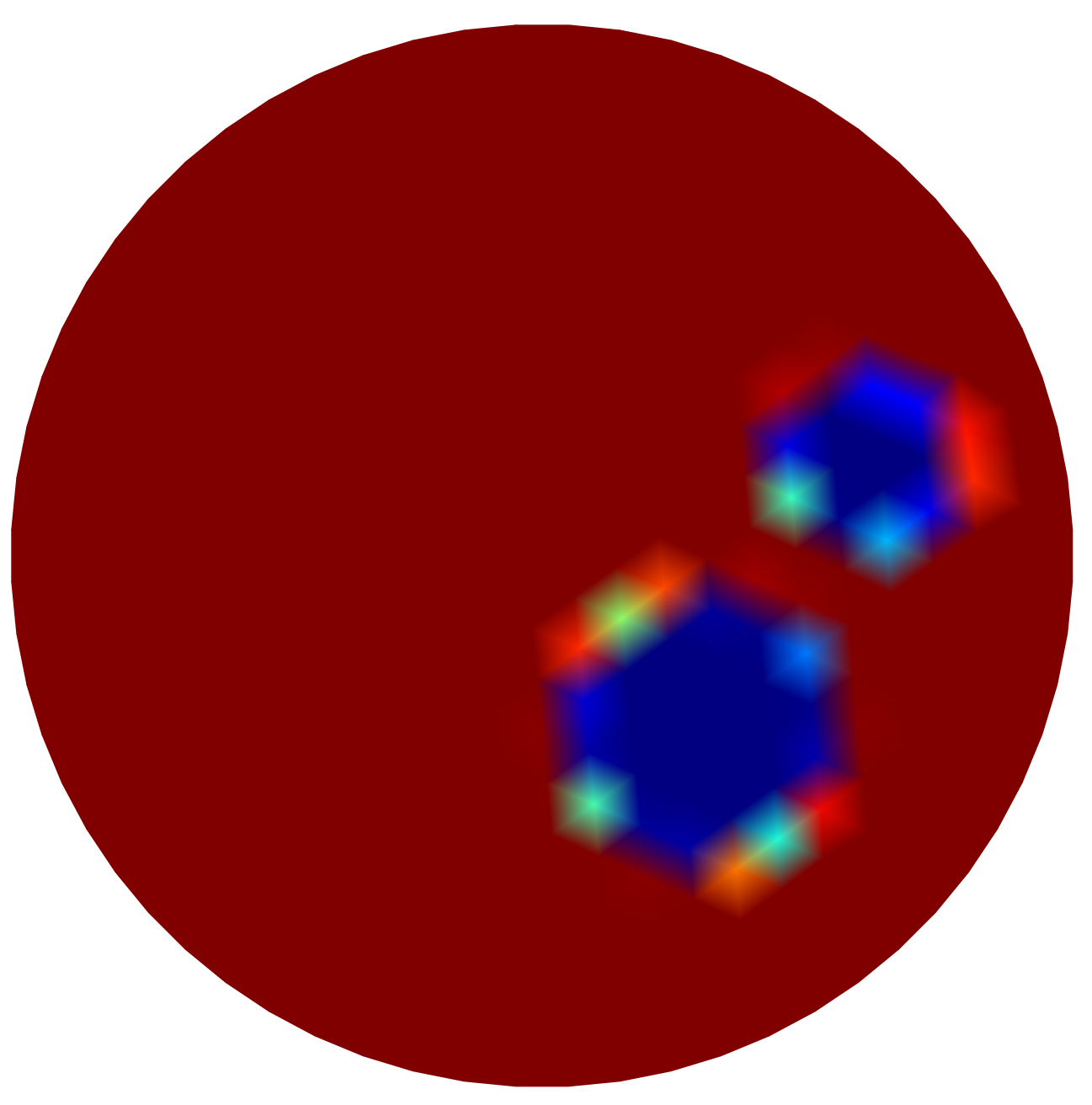} &
\includegraphics[width=0.07\textwidth]{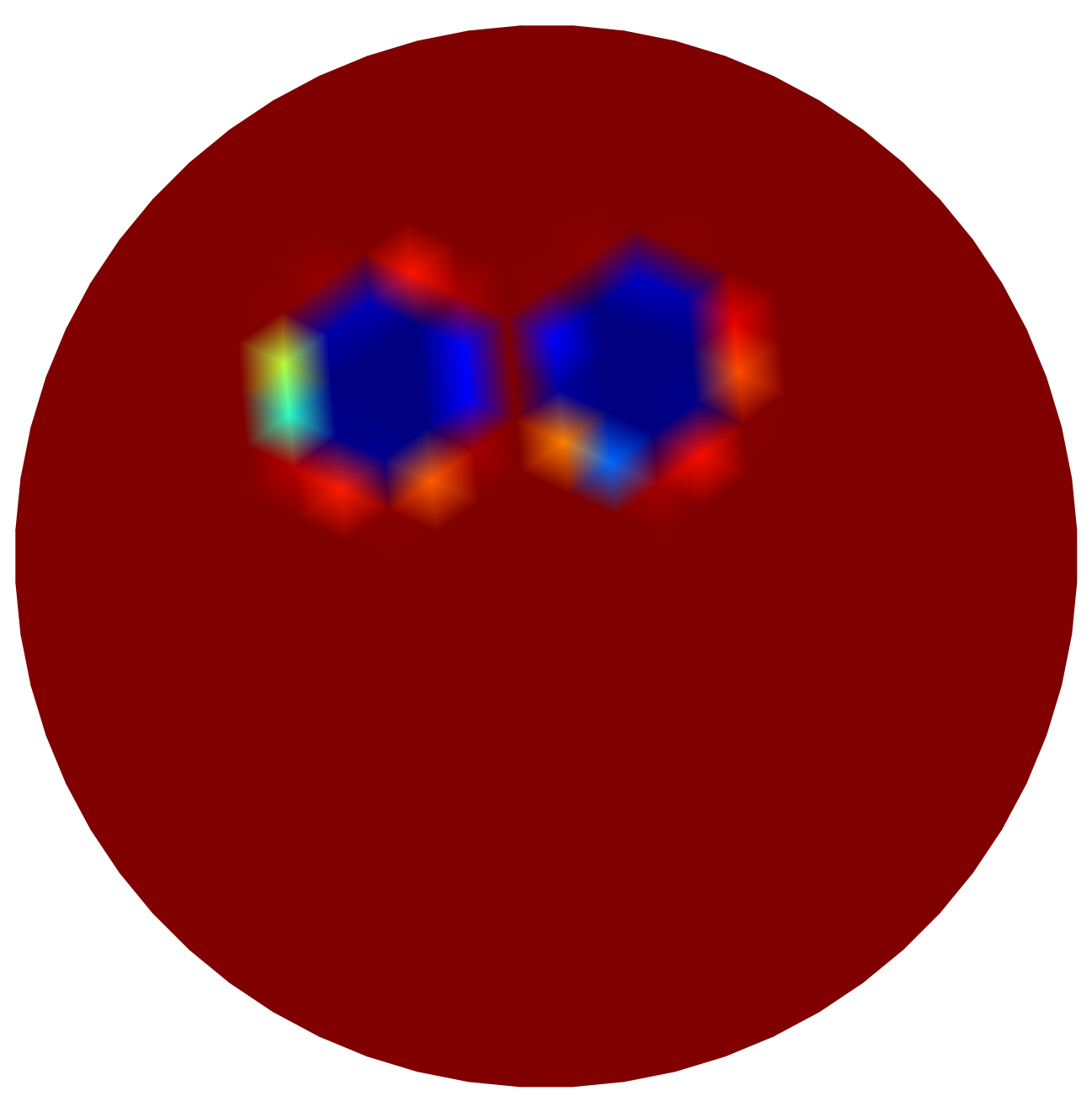} &
\includegraphics[width=0.07\textwidth]{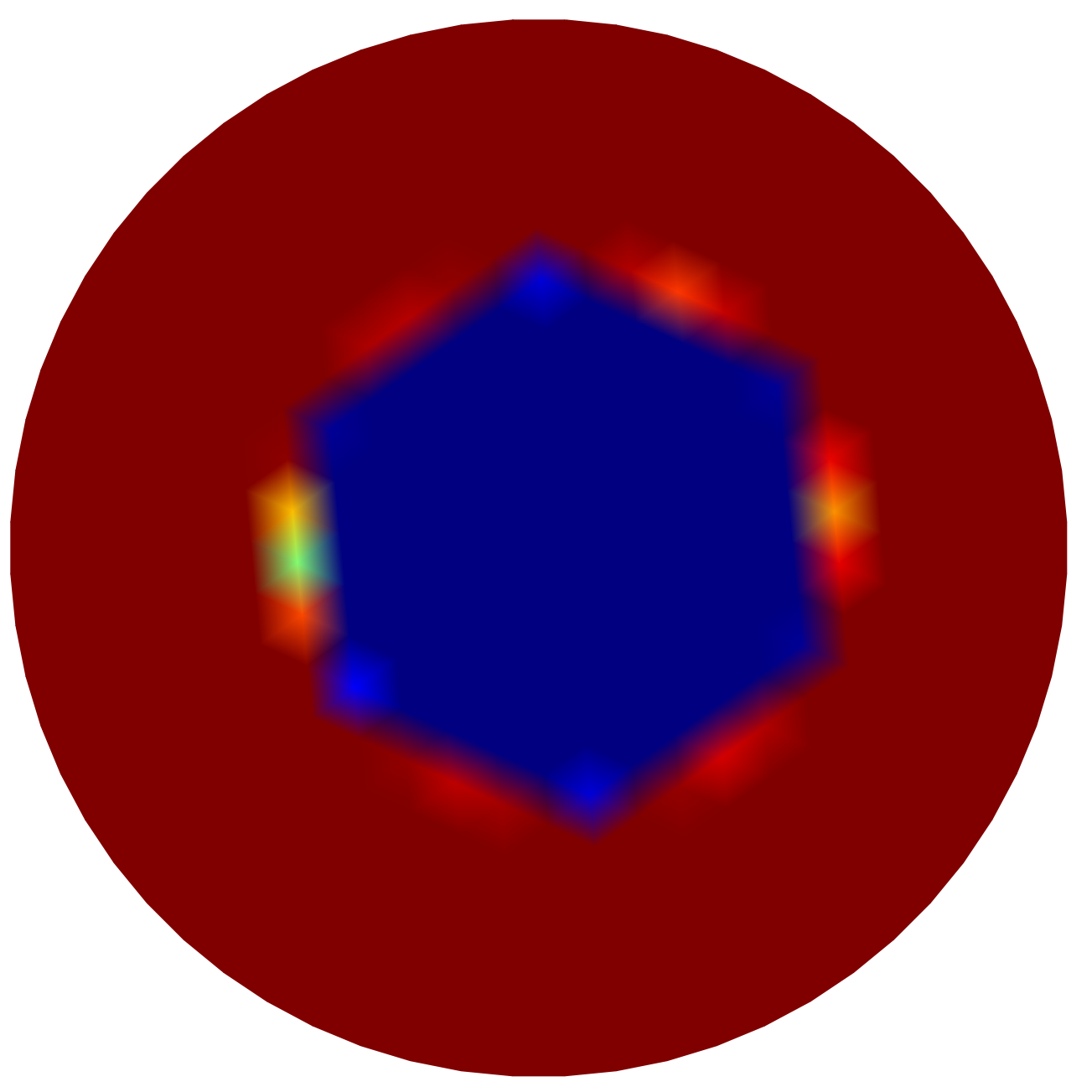} &
\includegraphics[width=0.07\textwidth]{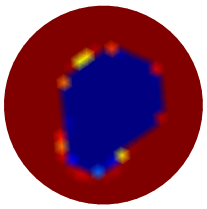} &
\includegraphics[width=0.07\textwidth]{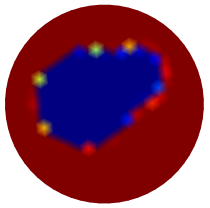} &
\includegraphics[width=0.07\textwidth]{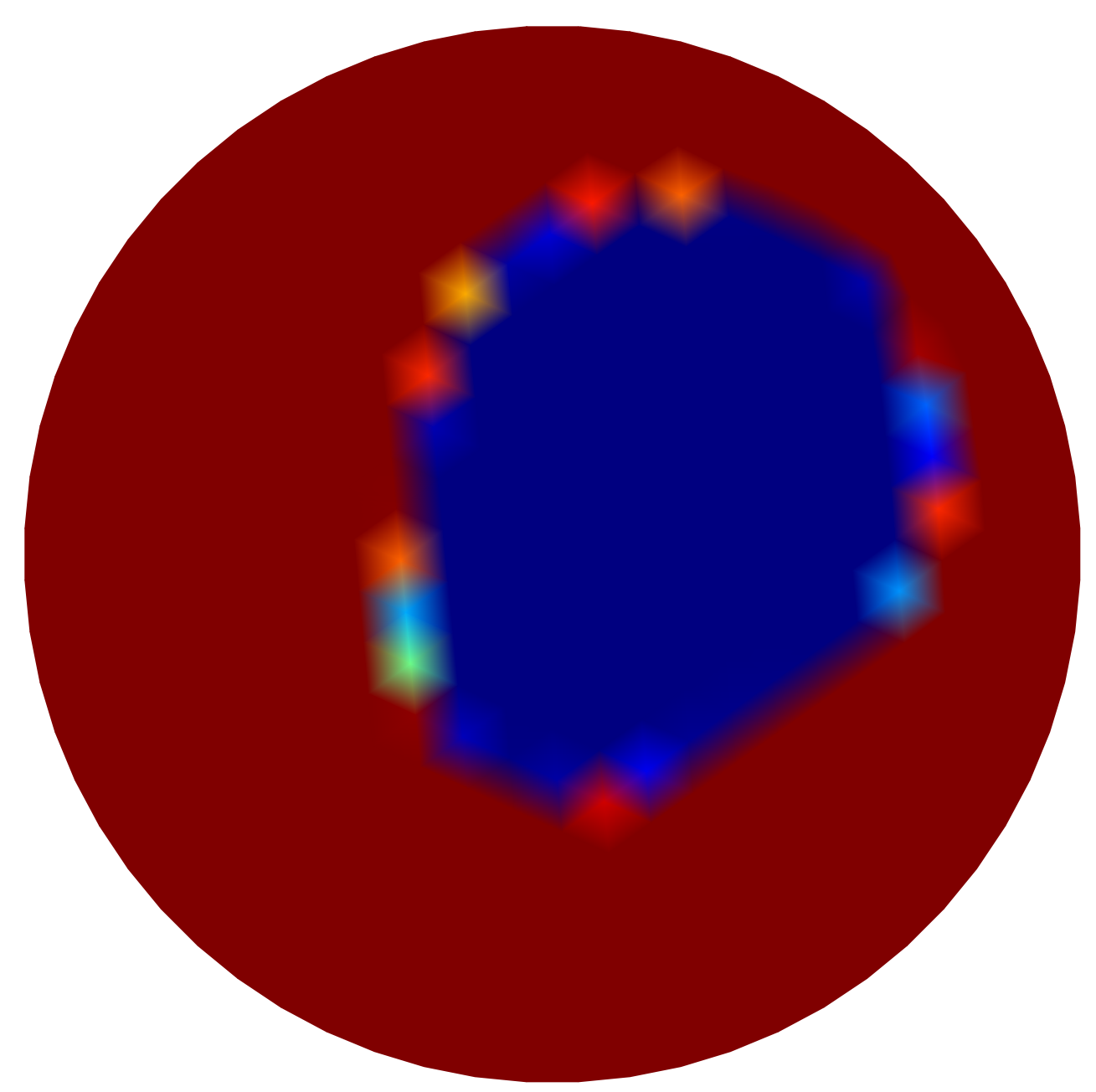} 
\\
& \scriptsize{Err$_{f_1}$=0.068} & \scriptsize{Err$_{f_1}$=0.056} & \scriptsize{Err$_{f_1}$=0.085} &
\scriptsize{Err$_{f_1}$=0.039}&\scriptsize{Err$_{f_1}$=0.054}&\scriptsize{Err$_{f_1}$=0.047}&\scriptsize{Err$_{f_1}$=0.076} 
\\
& \includegraphics[width=0.07\textwidth]{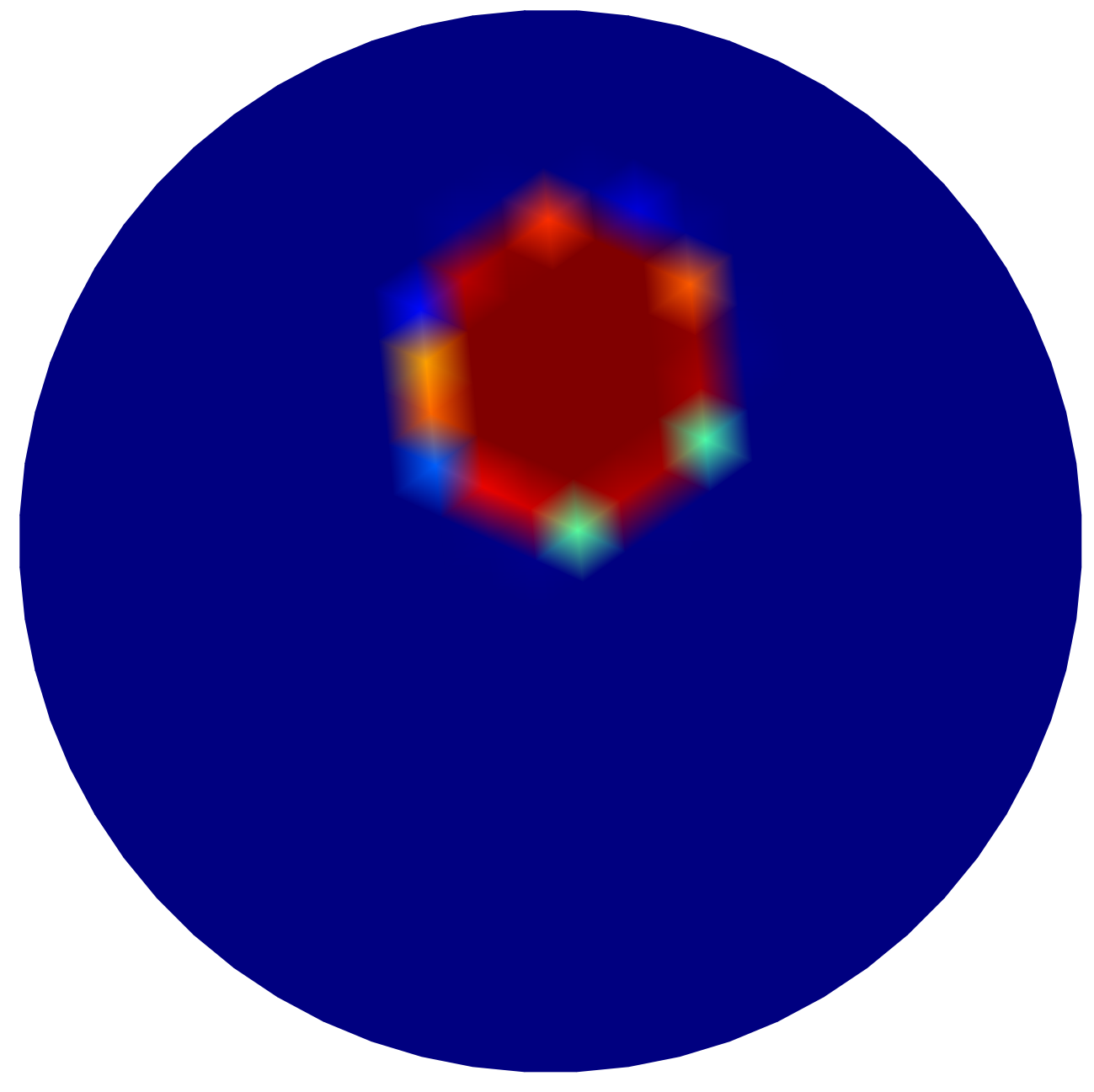} & 
\includegraphics[width=0.07\textwidth]{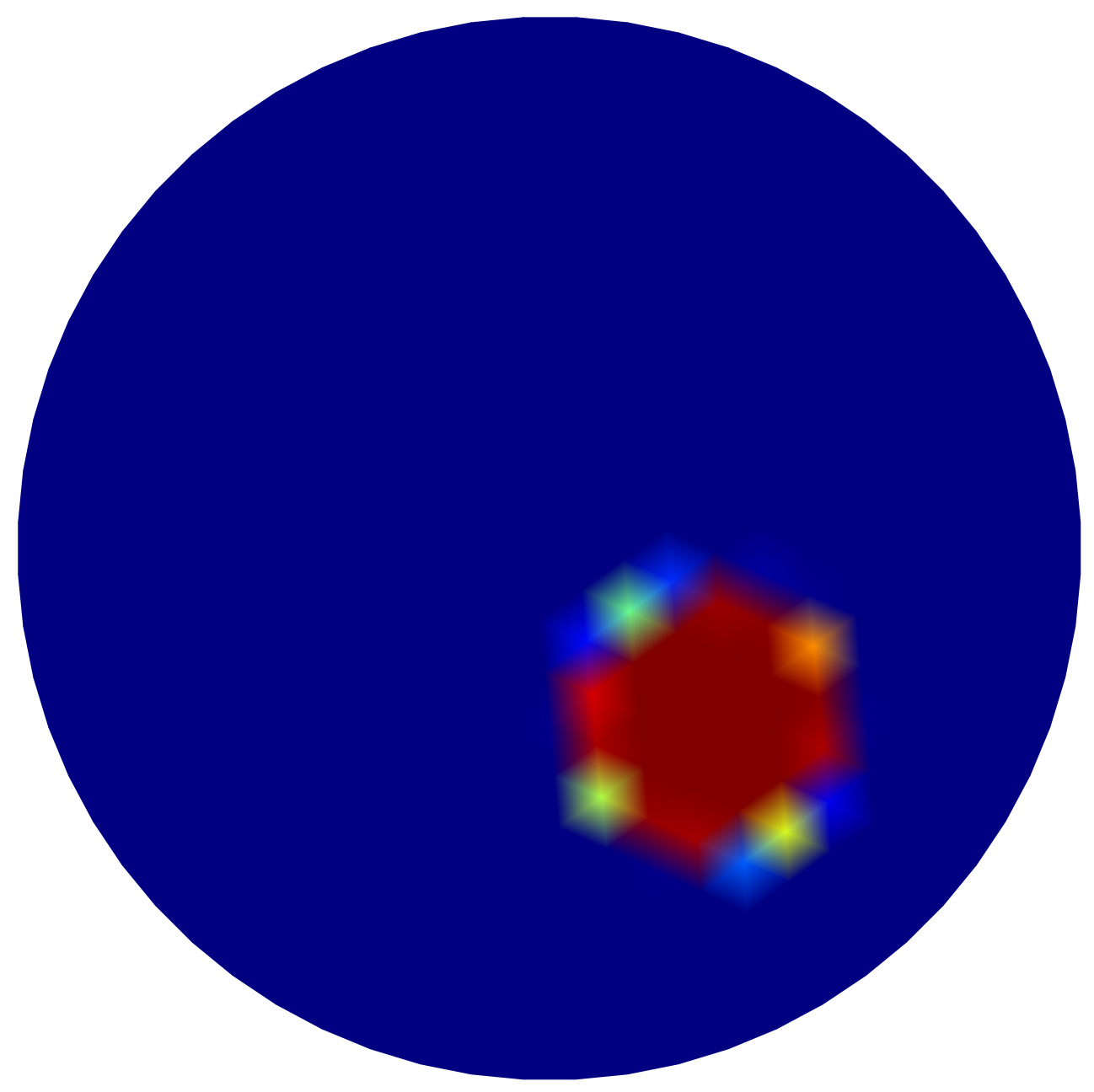} &
\includegraphics[width=0.07\textwidth]{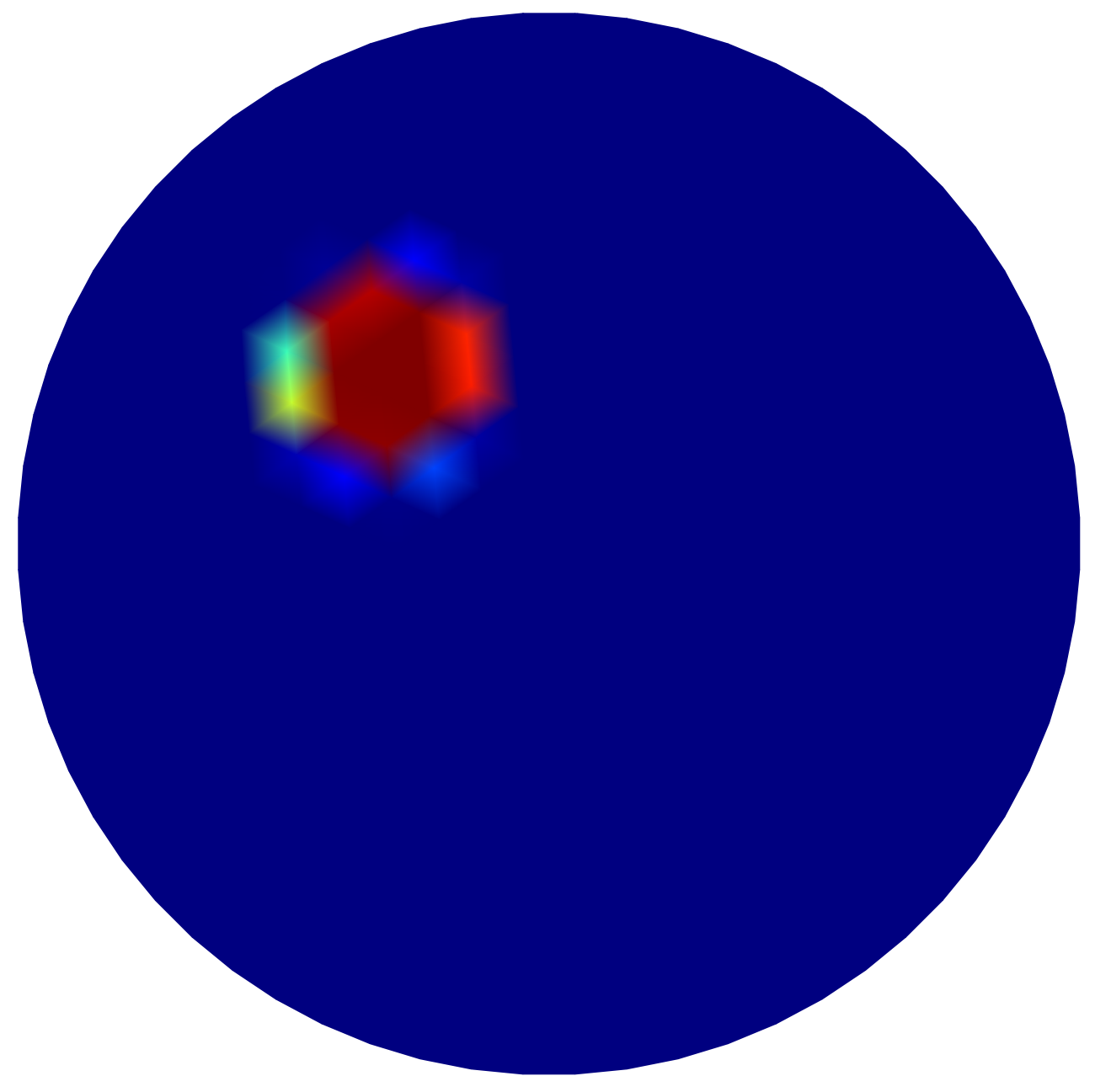} & \includegraphics[width=0.07\textwidth]{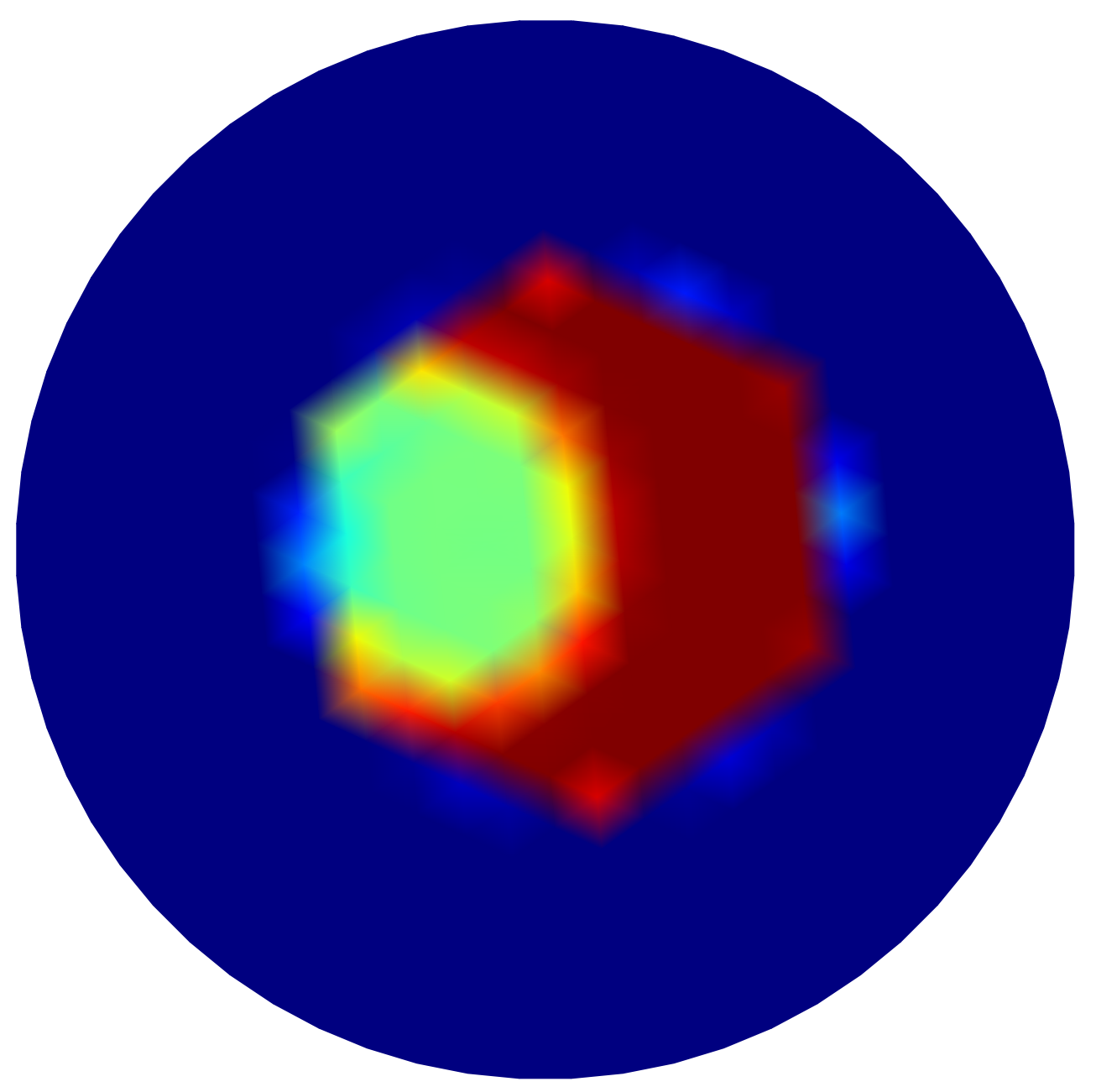}&
\includegraphics[width=0.07\textwidth]{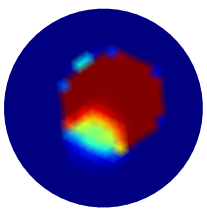} &
\includegraphics[width=0.07\textwidth]{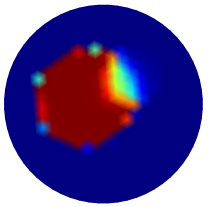} &
\includegraphics[width=0.07\textwidth]{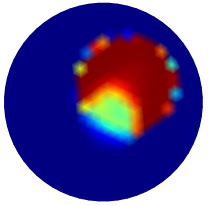} 
\\
& \scriptsize{Err$_{f_2}$=0.152} & \scriptsize{Err$_{f_2}$=0.191} & \scriptsize{Err$_{f_2}$=0.187} &
\scriptsize{Err$_{f_2}$=0.120} &
\scriptsize{Err$_{f_2}$=0.130} &
\scriptsize{Err$_{f_2}$=0.128} &
\scriptsize{Err$_{f_2}$=0.132} 
\\
& \includegraphics[width=0.07\textwidth]{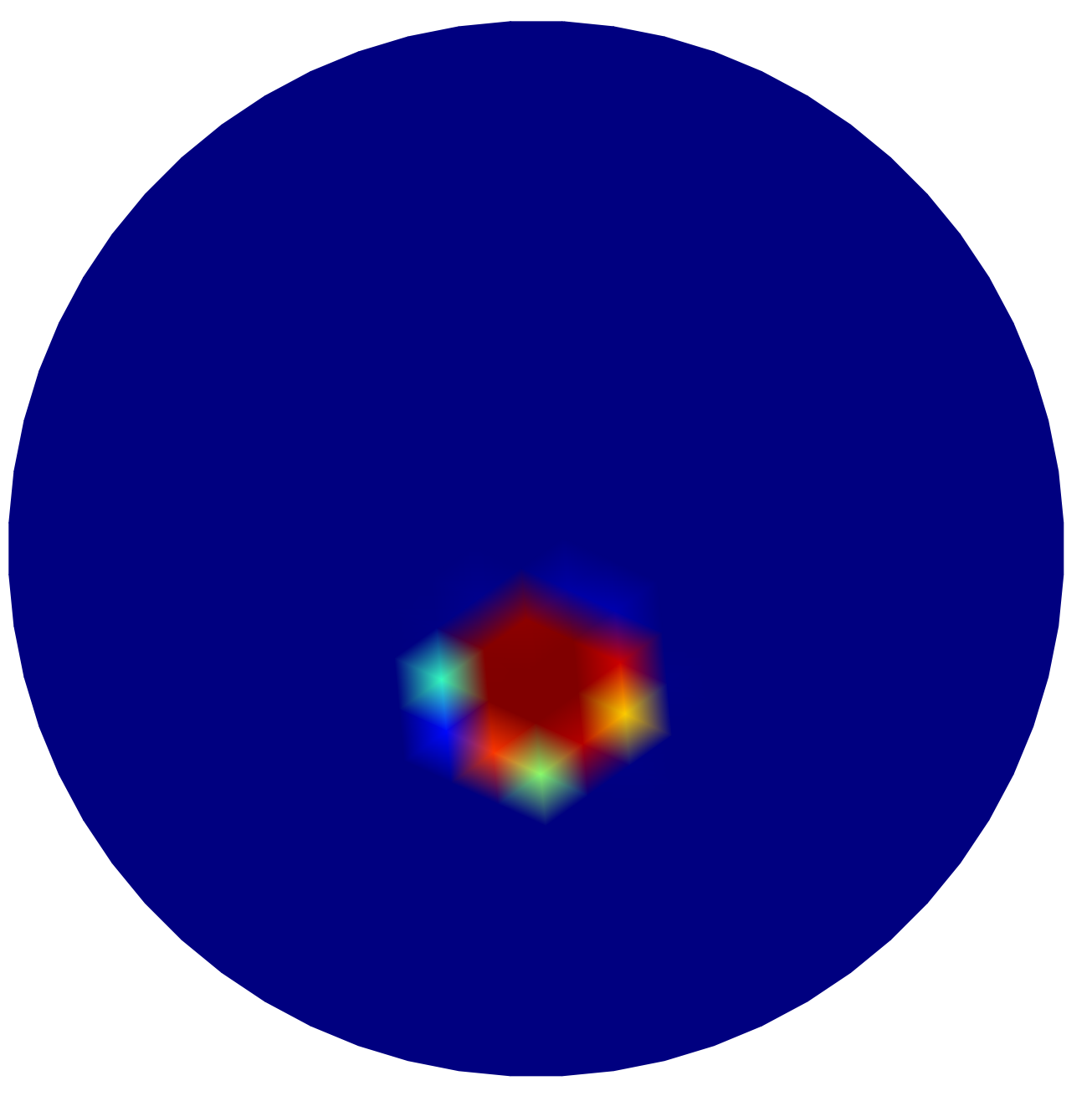} & 
\includegraphics[width=0.07\textwidth]{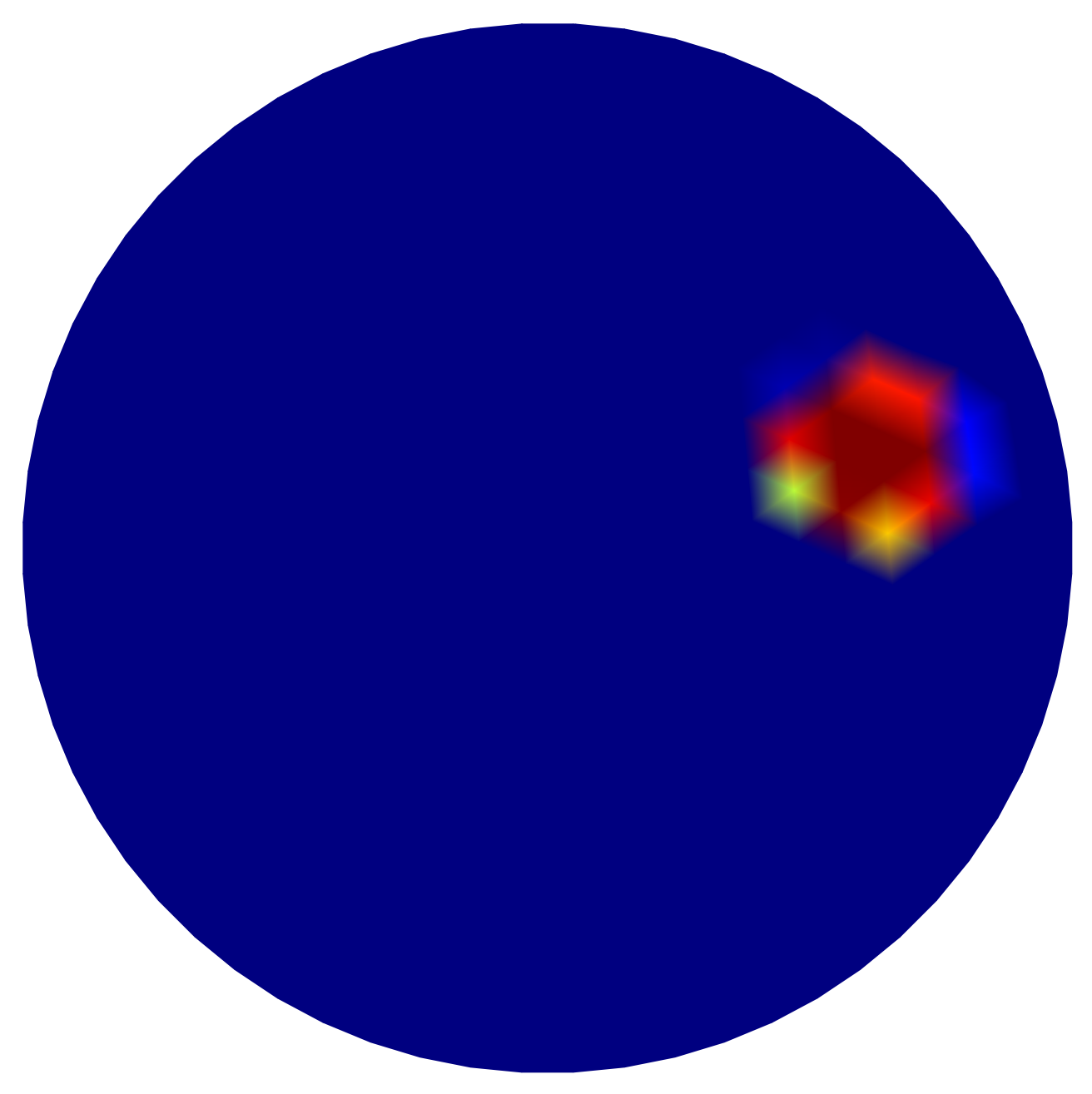} &
\includegraphics[width=0.07\textwidth]{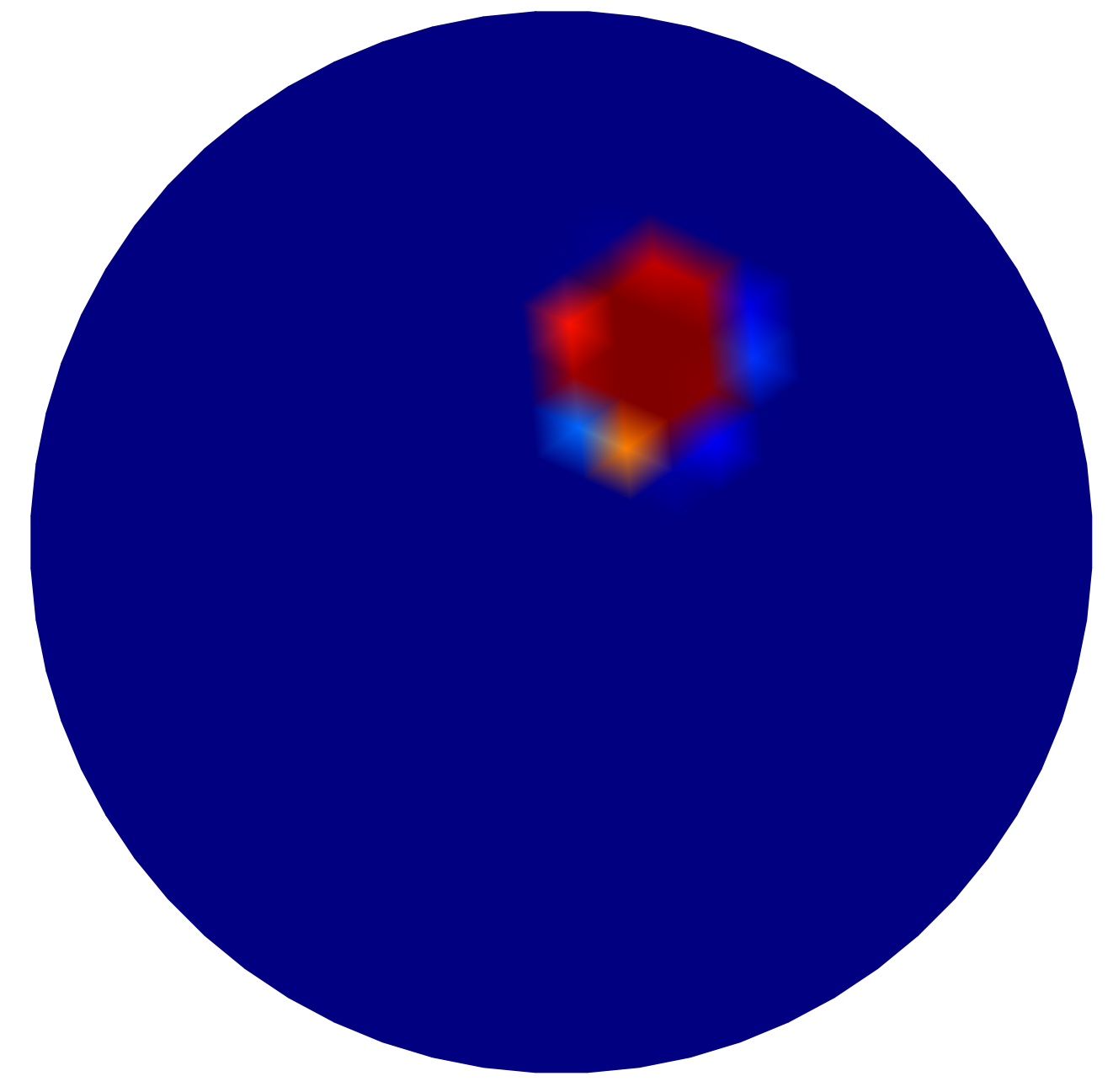} &
\includegraphics[width=0.07\textwidth]{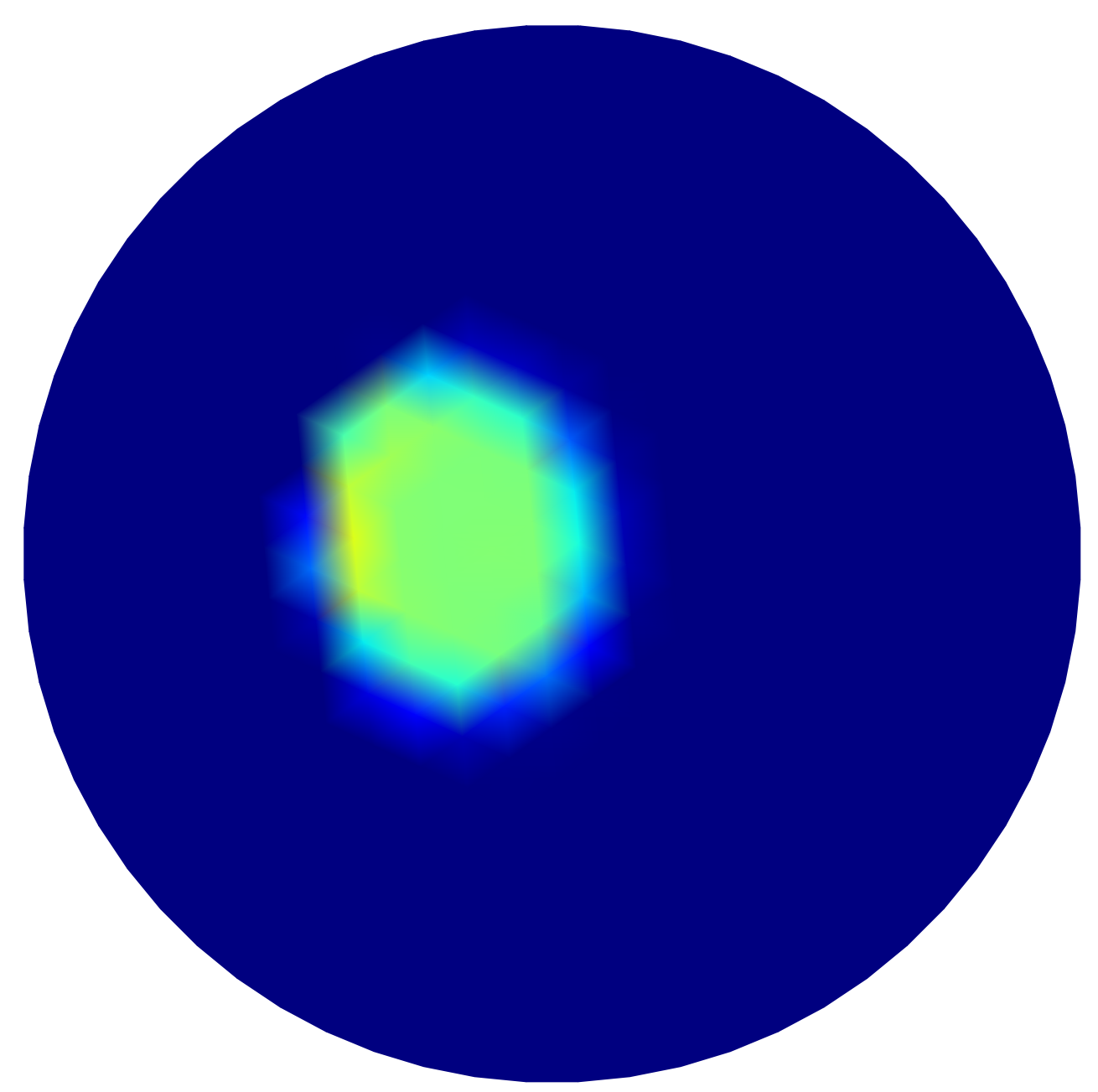} &
\includegraphics[width=0.07\textwidth]{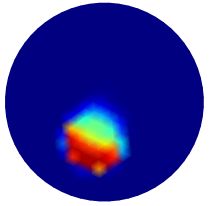} &
\includegraphics[width=0.07\textwidth]{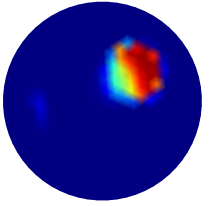} &
\includegraphics[width=0.07\textwidth]{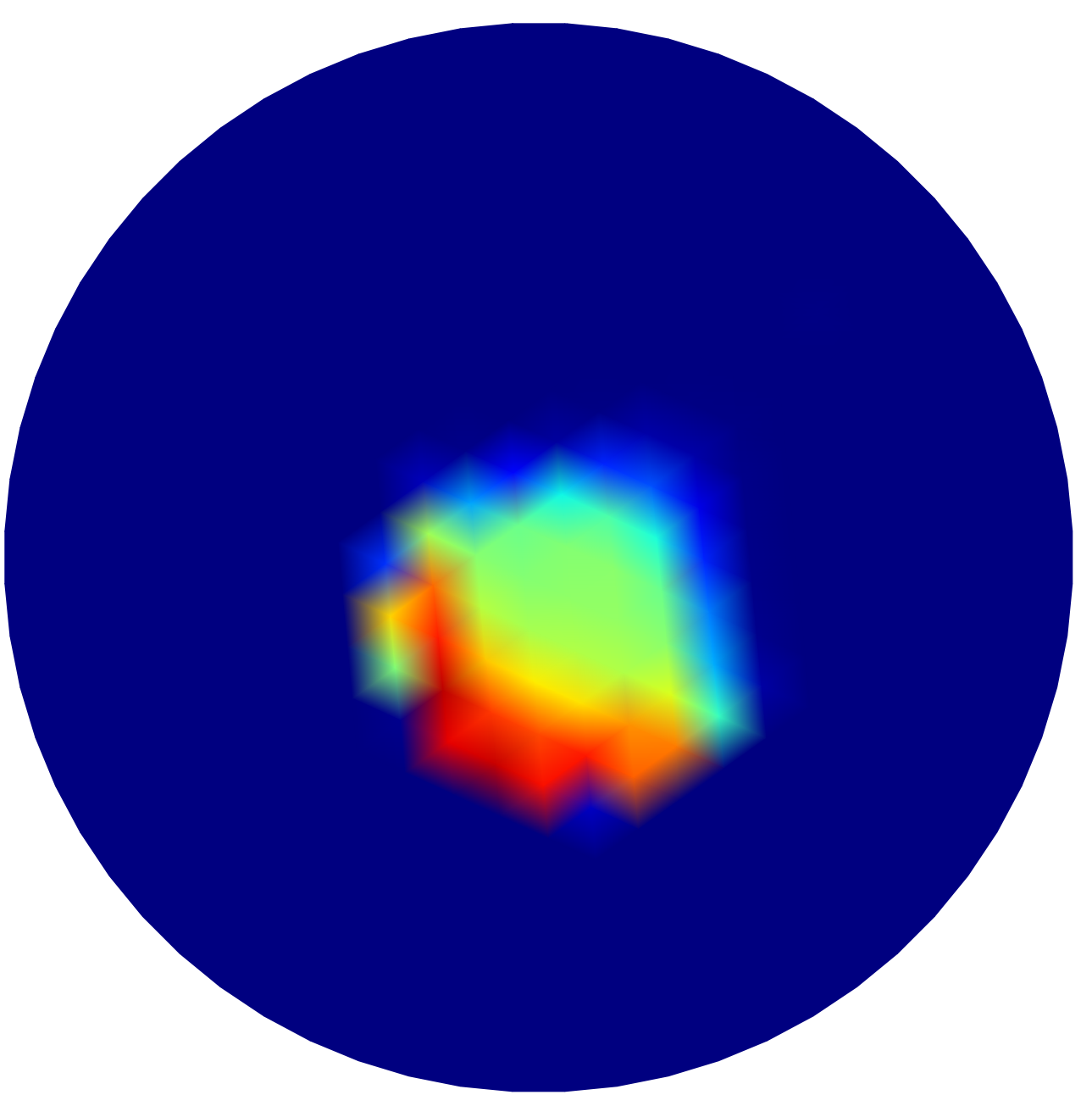} 
\\
& \scriptsize{Err$_{f_3}$=0.311} & \scriptsize{Err$_{f_3}$=0.170} & \scriptsize{Err$_{f_3}$=0.493} &
\scriptsize{Err$_{f_3}$=0.382} &
\scriptsize{Err$_{f_3}$=0.213} &
\scriptsize{Err$_{f_3}$=0.218} &
\scriptsize{Err$_{f_3}$=0.352} 
\\
\hline
    \cite{Malone2014} & 
    \includegraphics[width=0.07\textwidth]{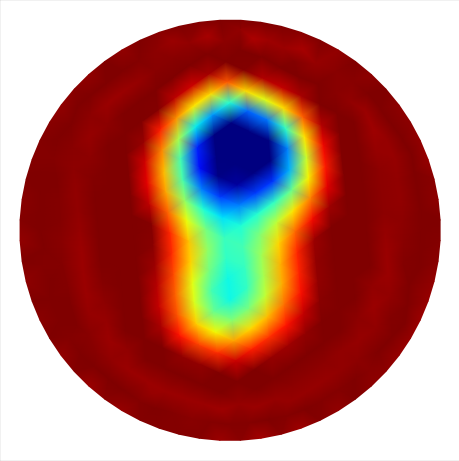} &
    \includegraphics[width=0.07\textwidth]{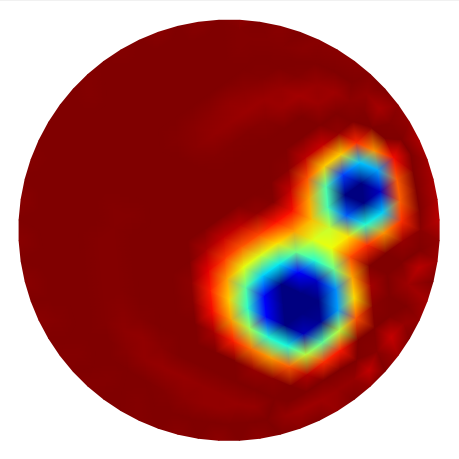} &
    \includegraphics[width=0.07\textwidth]{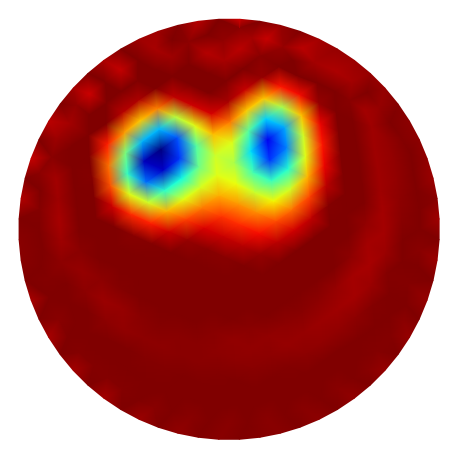} &
    \includegraphics[width=0.07\textwidth]{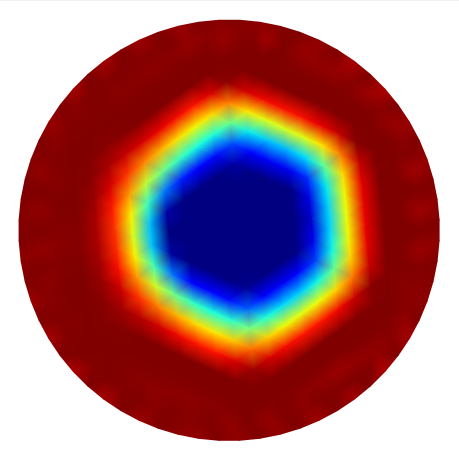} &
    \includegraphics[width=0.07\textwidth]{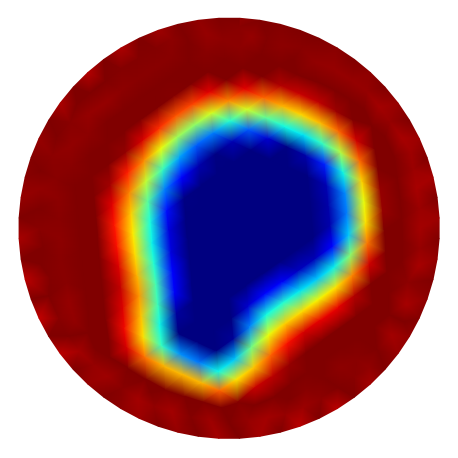} & 
    \includegraphics[width=0.07\textwidth]{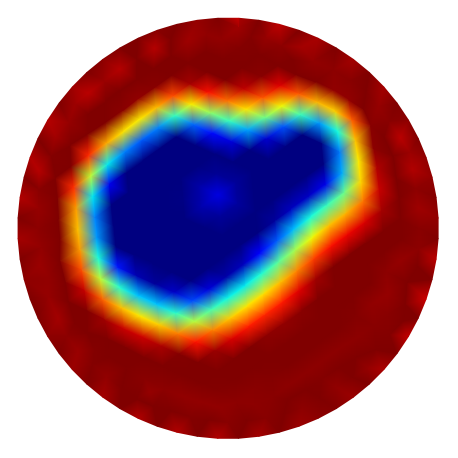} & 
    \includegraphics[width=0.07\textwidth]{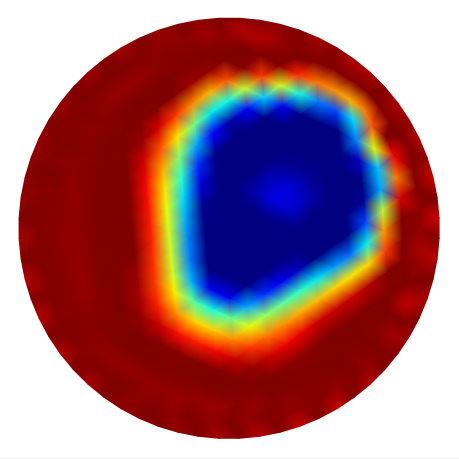} \\&
\scriptsize{Err$_{f_1}$=0.167} &
\scriptsize{Err$_{f_1}$=0.124} &
\scriptsize{Err$_{f_1}$=0.130} &
\scriptsize{Err$_{f_1}$=0.169} &
\scriptsize{Err$_{f_1}$=0.171} &
\scriptsize{Err$_{f_1}$=0.163} &
\scriptsize{Err$_{f_1}$=0.158} \\
& \includegraphics[width=0.07\textwidth]{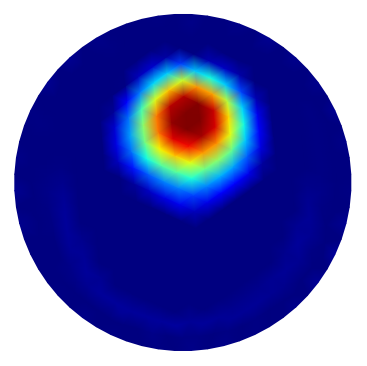} &
\includegraphics[width=0.07\textwidth]{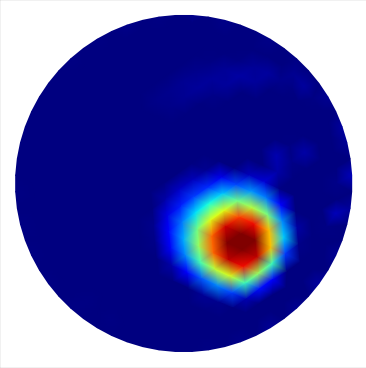} &
\includegraphics[width=0.07\textwidth]{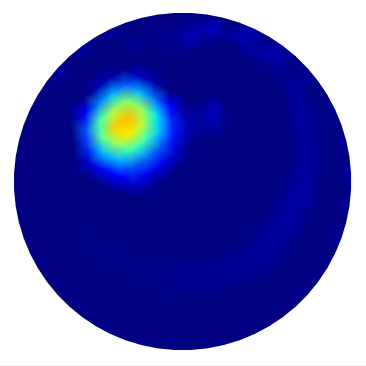} &
    \includegraphics[width=0.07\textwidth]{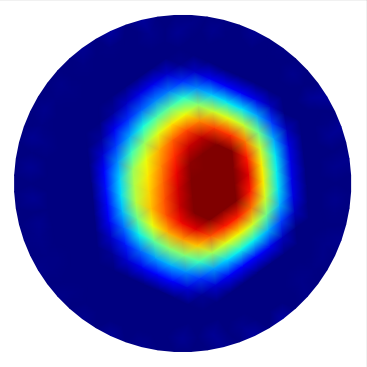} & 
    \includegraphics[width=0.07\textwidth]{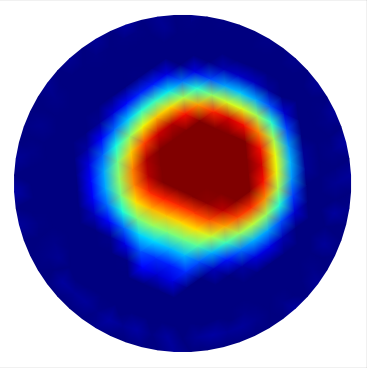} & 
    \includegraphics[width=0.07\textwidth]{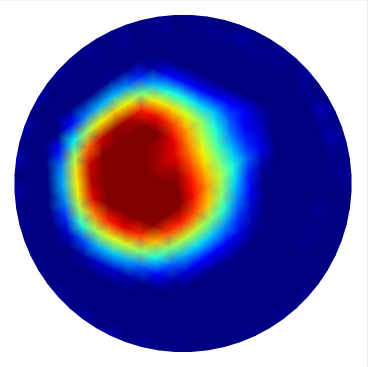} & 
    \includegraphics[width=0.07\textwidth]{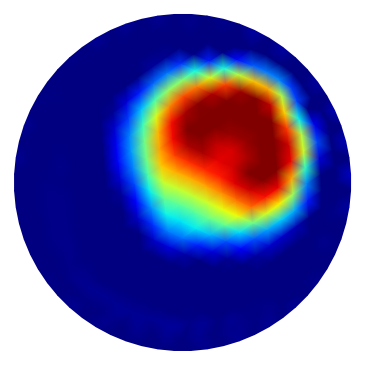} \\ &
\scriptsize{Err$_{f_2}$=0.414} &
\scriptsize{Err$_{f_2}$=0.419} &
\scriptsize{Err$_{f_2}$=0.576} &
\scriptsize{Err$_{f_2}$=0.427} &
\scriptsize{Err$_{f_2}$=0.323} &
\scriptsize{Err$_{f_2}$=0.310} &
\scriptsize{Err$_{f_2}$=0.321} \\
& 
\includegraphics[width=0.07\textwidth]{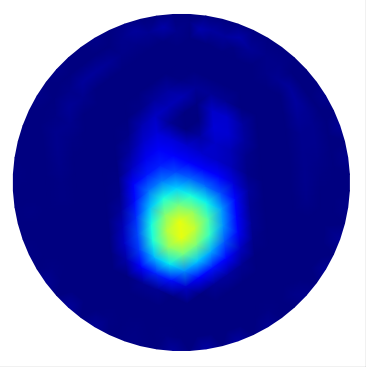} &
\includegraphics[width=0.07\textwidth]{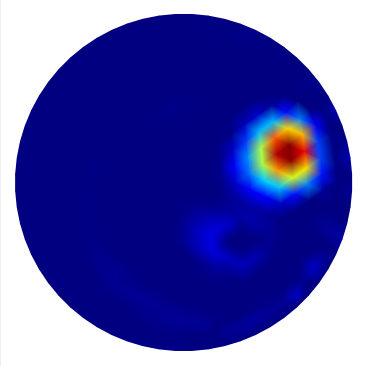} &
\includegraphics[width=0.07\textwidth]{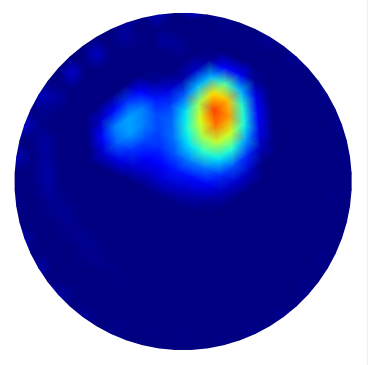} &
    \includegraphics[width=0.07\textwidth]{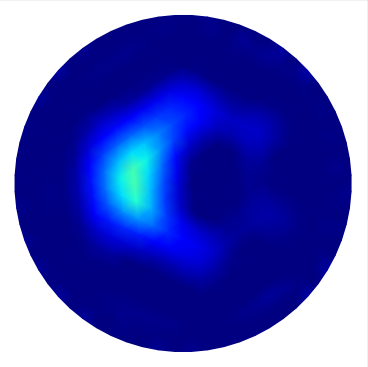} & 
    \includegraphics[width=0.07\textwidth]{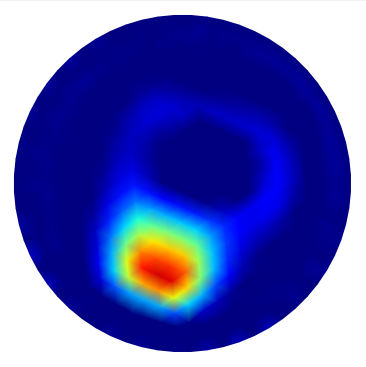} & 
    \includegraphics[width=0.07\textwidth]{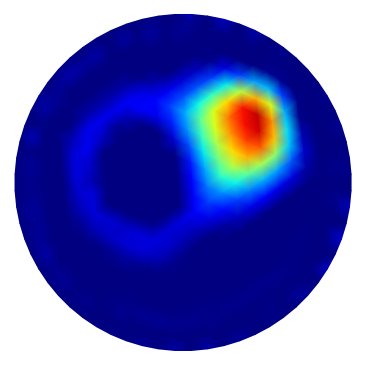} & 
    \includegraphics[width=0.07\textwidth]{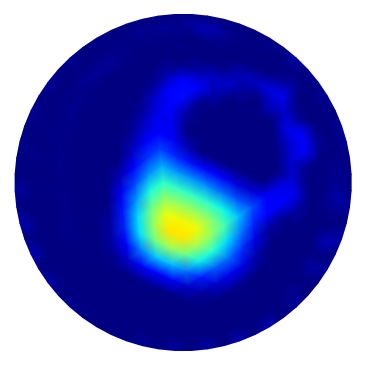} \\&
\scriptsize{Err$_{f_3}$=0.705} &
\scriptsize{Err$_{f_3}$=0.398} &
\scriptsize{Err$_{f_3}$=0.653} &
\scriptsize{Err$_{f_3}$=0.688} &
\scriptsize{Err$_{f_3}$=0.447} &
\scriptsize{Err$_{f_3}$=0.429} &
\scriptsize{Err$_{f_3}$=0.578} \\
\hline
FR-PRGN 
& \includegraphics[width=0.07\textwidth]{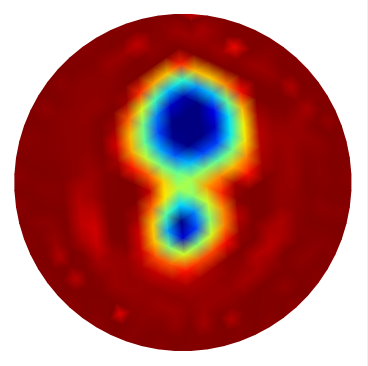} & 
\includegraphics[width=0.07\textwidth]{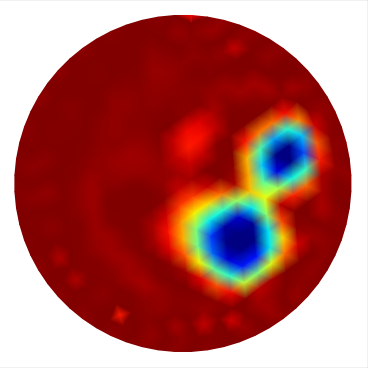} &
\includegraphics[width=0.07\textwidth]{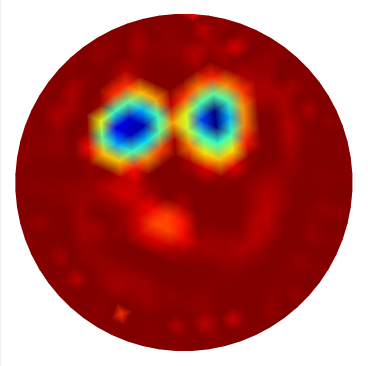} &
\includegraphics[width=0.07\textwidth]{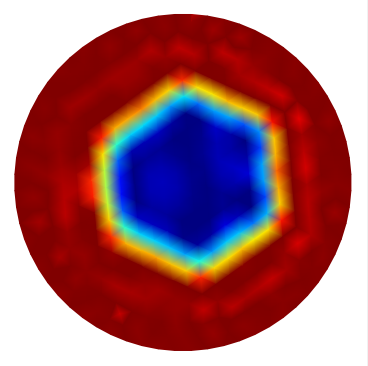} & 
\includegraphics[width=0.07\textwidth]{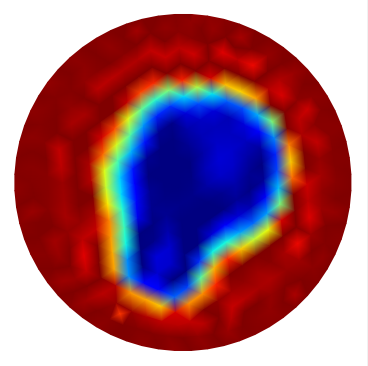} & 
\includegraphics[width=0.07\textwidth]{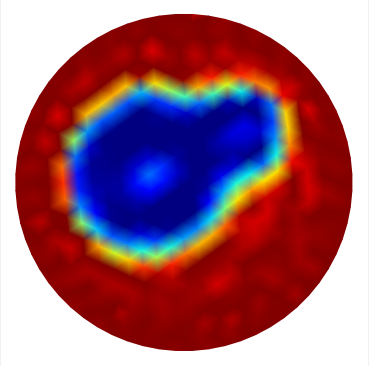} & 
\includegraphics[width=0.07\textwidth]{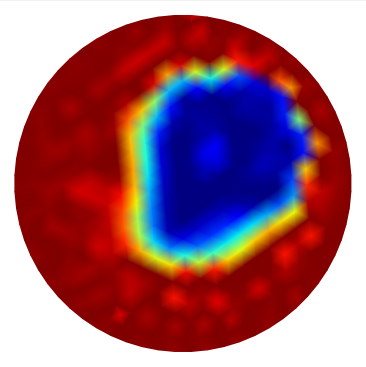} \\
& \scriptsize{Err$_{f_1}$=0.132} & \scriptsize{Err$_{f_1}$=0.122} & \scriptsize{Err$_{f_1}$=0.106} &
\scriptsize{Err$_{f_1}$=0.133}&
\scriptsize{Err$_{f_1}$=0.152}&
\scriptsize{Err$_{f_1}$=0.142}&
\scriptsize{Err$_{f_1}$=0.143} 
\\
& \includegraphics[width=0.07\textwidth]{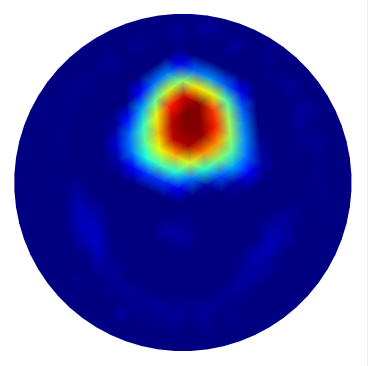} & 
\includegraphics[width=0.07\textwidth]{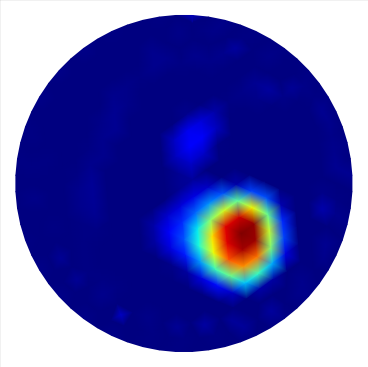} &
\includegraphics[width=0.07\textwidth]{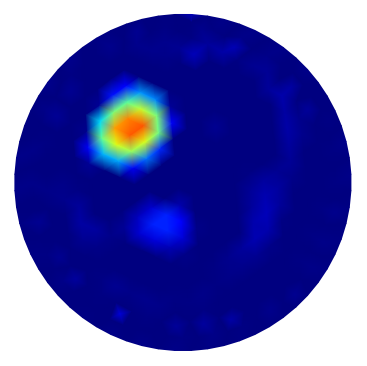} & 
\includegraphics[width=0.07\textwidth]{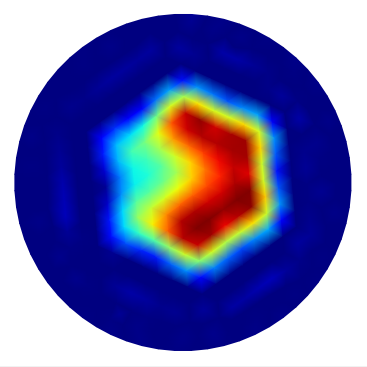}& 
\includegraphics[width=0.07\textwidth]{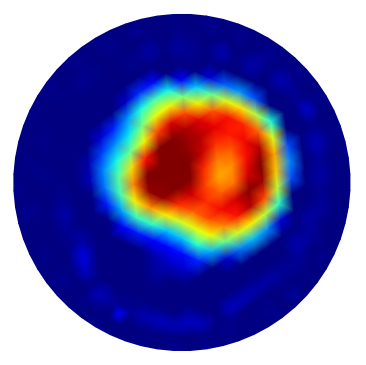}& 
\includegraphics[width=0.07\textwidth]{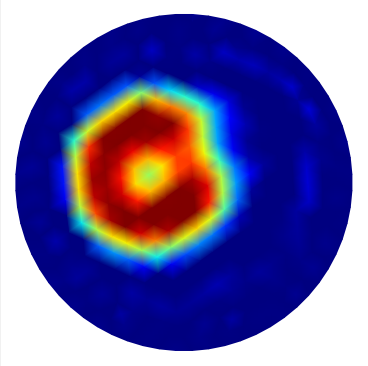}& 
\includegraphics[width=0.07\textwidth]{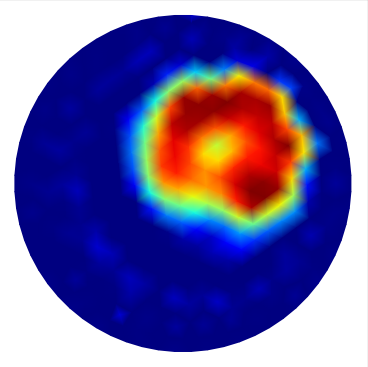}\\
& \scriptsize{Err$_{f_2}$=0.404} & \scriptsize{Err$_{f_2}$=0.430} & \scriptsize{Err$_{f_2}$=0.443} &
\scriptsize{Err$_{f_2}$=0.338} &
\scriptsize{Err$_{f_2}$=0.332}&
\scriptsize{Err$_{f_2}$=0.288}&
\scriptsize{Err$_{f_2}$=0.317}\\
& \includegraphics[width=0.07\textwidth]{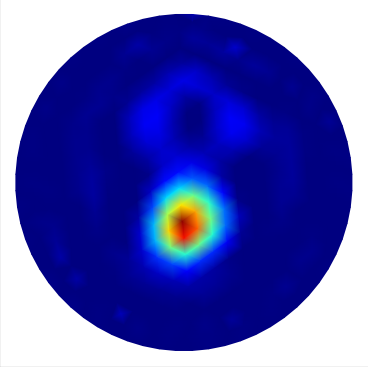} & 
\includegraphics[width=0.07\textwidth]{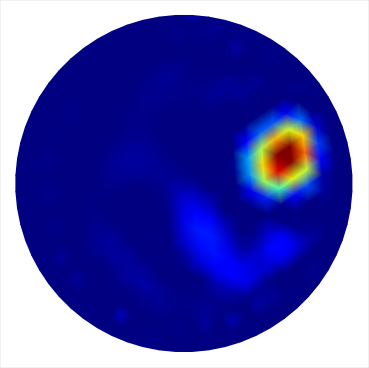} &
\includegraphics[width=0.07\textwidth]{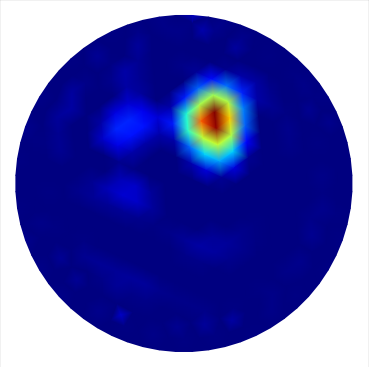} & 
\includegraphics[width=0.07\textwidth]{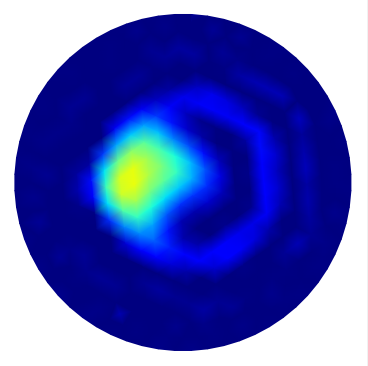}& 
\includegraphics[width=0.07\textwidth]{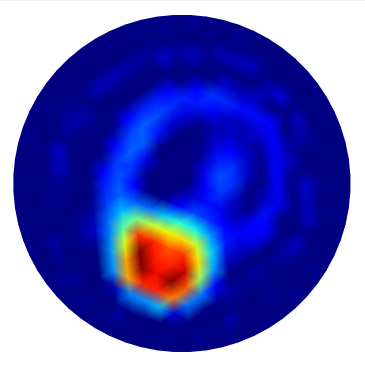}& 
\includegraphics[width=0.07\textwidth]{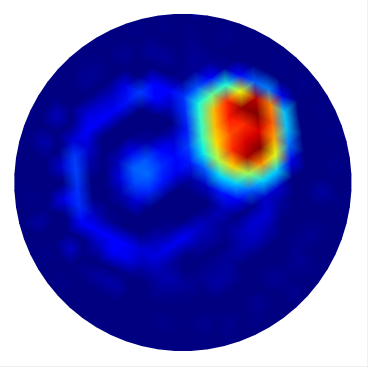}& 
\includegraphics[width=0.07\textwidth]{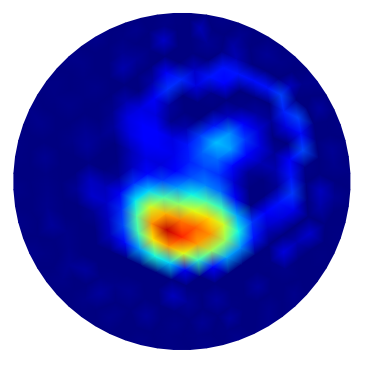}\\
& \scriptsize{Err$_{f_3}$=0.547} & \scriptsize{Err$_{f_3}$=0.449} & \scriptsize{Err$_{f_3}$=0.469} &
\scriptsize{Err$_{f_3}$=0.532} &
\scriptsize{Err$_{f_3}$=0.437}&
\scriptsize{Err$_{f_3}$=0.395}&
\scriptsize{Err$_{f_3}$=0.566}\\
\hline    
\end{tabular}
    \caption{Example 1: Fraction reconstructions comparison among FR-PRGN, mf-Net, and \cite{Malone2014} on a few test samples (column-wise).}
    \label{fig:Comp}
\end{figure*}

\subsection{Reconstruction examples}
\label{sec:RE}

In the following examples, we aim to validate the two proposed approaches – unrolled mf-Net and FR-PRGN – on synthetic data for both fraction reconstruction and conductivity reconstruction.

\paragraph{Example 1. Fraction Reconstruction.}

This example aims to validate the fraction reconstruction using MMV-based procedures.
We compared the performance of Algorithm 1 FR-PRGN, with the unrolled mf-Net - see $k$th iterative block in Algorithm \ref{alg:alg2} - and with the iterative method proposed in \cite{Malone2014}.  
Among the existing MMV-based mfEIT reconstruction methods, only the iterative algorithm proposed in \cite{Malone2014} addresses the direct reconstruction of fraction distributions, which then easily allows reconstructing the associated conductivities for each frequency. 
Other mfEIT approaches instead reconstruct only the different conductivities for each frequency, see \cite{MMVNET2023}. Moreover, no other methods but \cite{Malone2014} consider the co-existence of more than one tissue at a given region on the domain,  a necessary requirement for the applications to tissue engineering.

Fig. \ref{fig:Comp} provides both a visual and quantitative comparison of the recovered fractions for the three evaluated methods.
In each horizontal block, the first, second, and third rows show the first, second, and third reconstructed fraction, respectively, which corresponds to a different tissue. In the first three columns, we report a few results from the \texttt{No-Overlap} test dataset ($T=3,M=2$), while in the remaining four columns the results come from the \texttt{Overlap} test dataset ($T=3,M=2$).

All three methods demonstrate a reasonable capability to recover the underlying fractional distribution, broadly capturing the general shape and presence of the active regions compared to the Ground Truth (GT in the first row block).
Both mf-NET and FR-PRGN are highly effective in recovering the fractional distributions, often outperforming or matching the performance of the established variational method \cite{Malone2014}.
This is supported by the Err$_{f_i}$ values reported below each reconstruction, reinforcing their strong performance.

The variational method \cite{Malone2014} also shows good recovery, though in some instances, the recovered fractions might appear slightly more diffused or less sharply defined than those from the GT or mf-Net. However, it consistently identifies the correct locations of the tissue fractions.
 
FR-PRGN exhibits strong performance, often yielding recovered fractions that are distinct and accurately located. In several samples, its output is highly comparable to both mf-Net and GT, demonstrating its effectiveness in capturing the underlying distribution.

We achieve the best results applying mf-Net, obtaining accurately shaped recovered fractions. The distribution and intensity of the recovered fractions generally align well with the GT.

In Table \ref{tab:table1}, we aim to demonstrate the value added by our proposals, going beyond the basic estimate $\hat{F}$ generated by the algorithm in Section \ref{sec:FEST}, and shown in the first column.
In Table \ref{tab:table1} we report the average reconstruction errors both for the fraction values $\overline{Err}_{f_i}$ and for the conductivities $\overline{Err}_{\sigma_i}$, on the whole \texttt{Overlap} test dataset.
All three methods improve the estimated fractions obtained by the F-EST algorithm.  mf-Net outperforms the other methods, halving the reconstruction errors for the fractions.

\begin{table}
\caption{Average Error on \texttt{Overlap} test dataset}
\begin{center}
\begin{tabular}{c|cccc}
 & F-EST  & mf-Net & FR-PRGN & \cite{Malone2014}\\
\hline\\
$\overline{\rm Err}_{f_1}$  &0.2850 & \textbf{0.0732}& 0.1579& 0.1642\\
$\overline{\rm Err}_{f_2}$  &0.4482 & \textbf{0.1684}& 0.3523& 0.3425 \\
$\overline{\rm Err}_{f_3}$  & 0.8046& \textbf{0.3186}& 0.5128& 0.5124\\
$\overline{\rm Err}_{\sigma_1}$ & - &\textbf{0.0444} & 0.1003& 0.3621\\
$\overline{\rm Err}_{\sigma_2}$ & - & \textbf{0.0205}& 0.0345 & 0.1067
\end{tabular}
\end{center}
\label{tab:table1}
\end{table}

\begin{table}
\caption{Average Error on \texttt{No-Overlap} test dataset}
\begin{center}
\begin{tabular}{c|cc|cc}
 & PnP &  mf-Net & PnP \scriptsize{(Noise)} &  mf-Net \scriptsize{(Noise)}\\
\hline \\
$\overline{\rm Err}_{\sigma_1}$ & 
0.0818 & \textbf{0.0441} & 
 0.2526  & \textbf{0.0921} \\
$\overline{\rm Err}_{\sigma_2}$ & 0.1764 &\textbf{0.0973} & 
0.2996 & \textbf{0.1978} 
\end{tabular}
\end{center}
\label{tab:table2}
\end{table}

\paragraph{Example 2. Conductivity reconstruction.}

If our only goal was the reconstruction of the conductivity, SMV methods could be applied for each frequency individually. However, to highlight the advantage of using MMV approaches—which account for the correlation between various tissue fractions—we are comparing our mf-Net against the Plug-and-Play (PnP) strategy for EIT conductivity reconstruction presented in \cite{Col2023DeepplugandplayPG}, as our chosen SMV benchmark.

In this example, we can only consider the data from the \texttt{No-Overlap} dataset ($T=4,M=2$), which can naturally be processed by those MMV methods that consider only 0/1 fractions and SMV methods.

Figure \ref{fig:PnP} presents a visual comparison of the recovered conductivities $\sigma_1$ and $\sigma_2$ with their respective GT, for three selected samples.
We notice that at frequency $\omega_1$ the tissue potato is not visible, as it presents a conductivity spectrum similar to the saline background.
The performances are evaluated against the GT, reporting the Err$_{\sigma_i}$ values below each reconstructed result.
Based on both visual inspection and the provided error metrics, the mf-Net method consistently outperforms the PnP method in reconstructing the shown conductivities. mf-NET yields reconstructions that are much closer to the GT, exhibiting clearer boundaries and more accurate representations of the original structures, while PnP produces significantly blurred and less accurate results.
Table \ref{tab:table2} clearly shows that mf-Net consistently outperforms PnP across both evaluated average error metrics on the \texttt{No-Overlap} test dataset. In particular, mf-Net achieves almost half the error of PnP, indicating superior accuracy and reconstruction quality in this specific testing scenario. 
 
\begin{figure*}[h]
    \centering
    \begin{tabular}{cc|cc|cc}
    Sample & GT & mf-Net & PnP  &  mf-Net \scriptsize{(Noise)} & PnP \scriptsize{(Noise)} \\
     \hline   
& \includegraphics[width=0.07\textwidth]{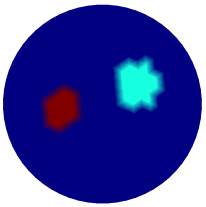} & 
\includegraphics[width=0.07\textwidth]{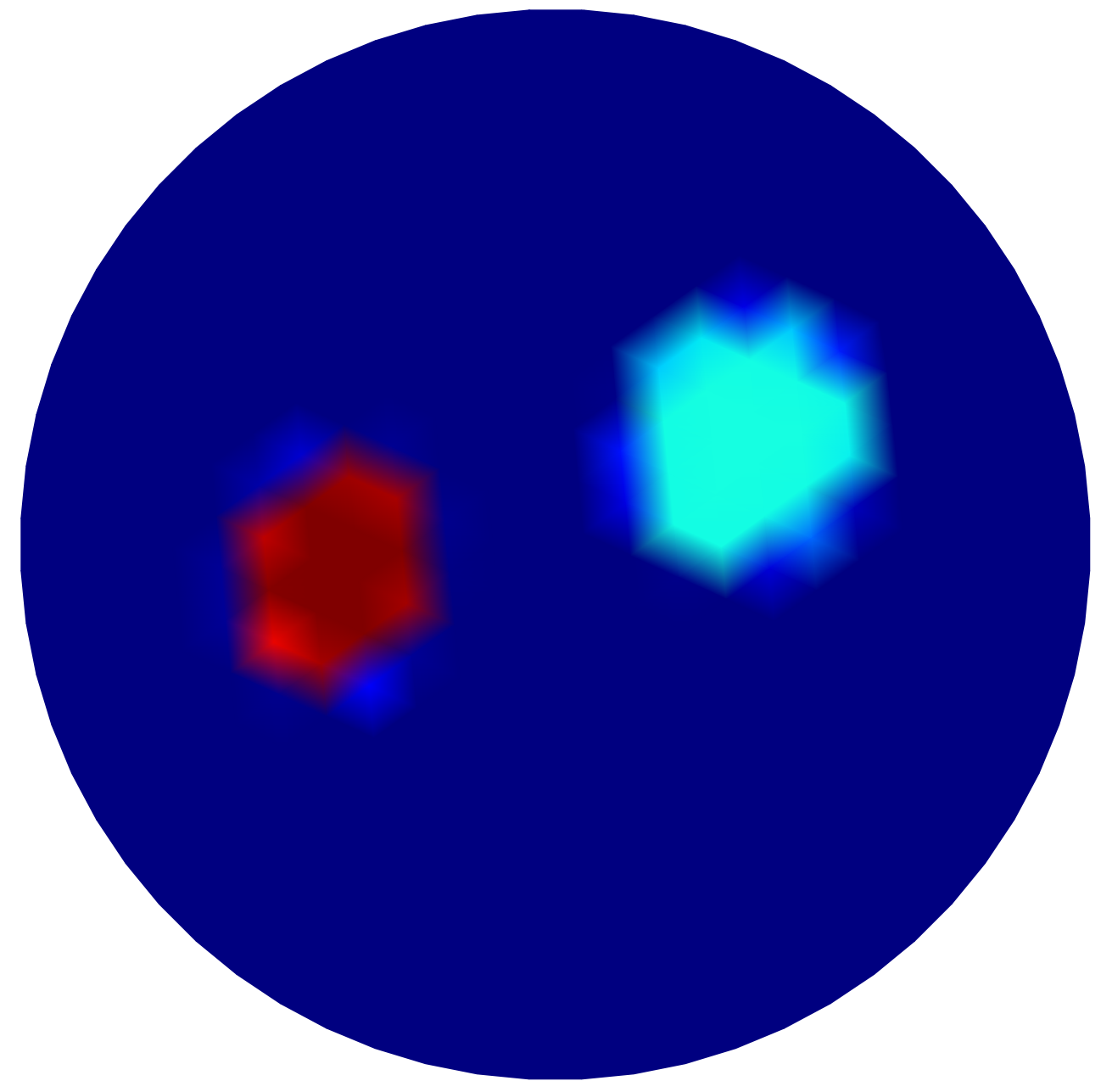} &
\includegraphics[width=0.07\textwidth]{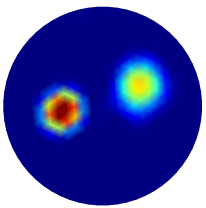} &
\includegraphics[width=0.07\textwidth]{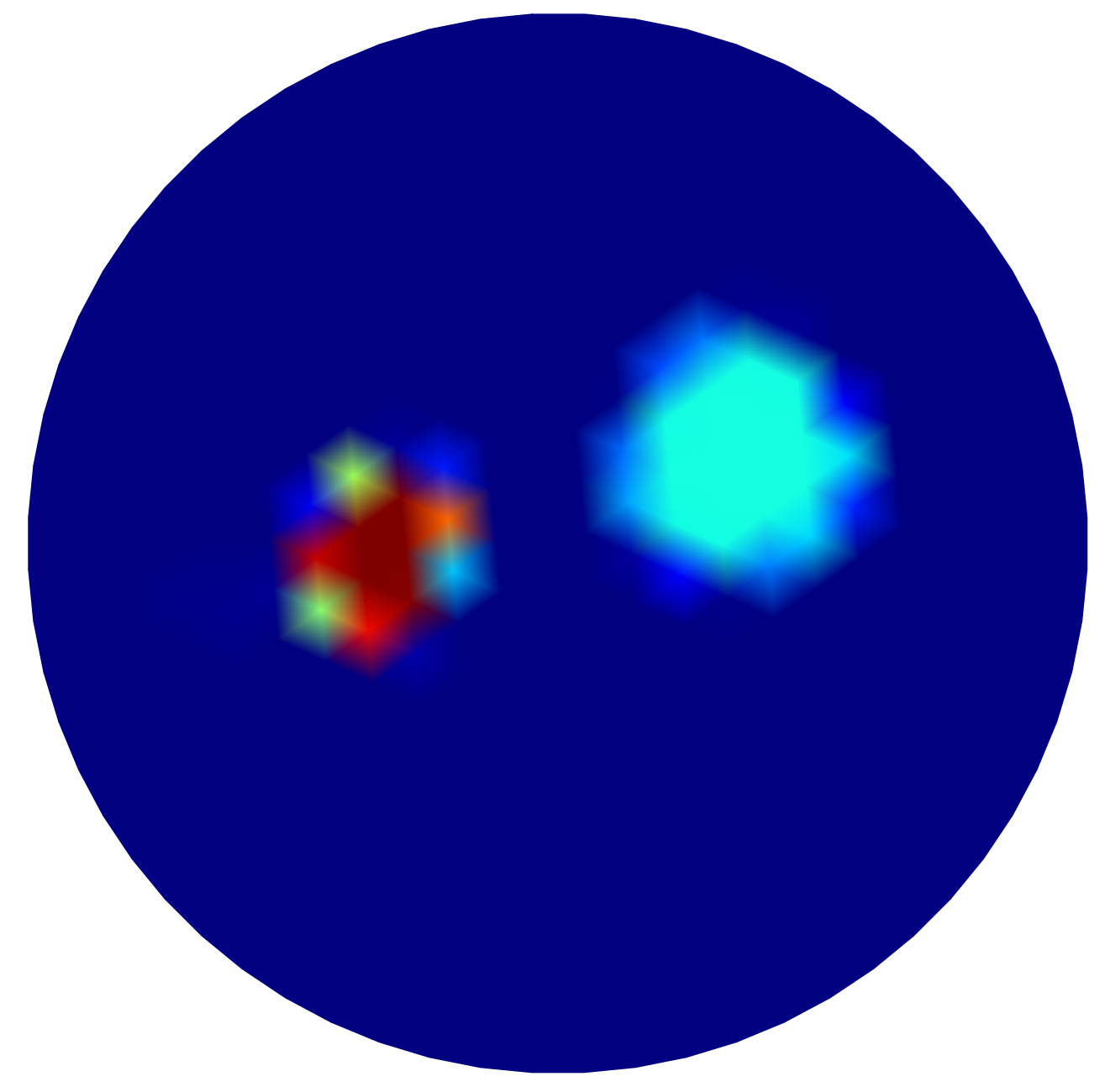} &
\includegraphics[width=0.07\textwidth]{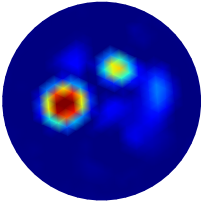} 
\\
& &
\scriptsize{\scriptsize{Err$_{\sigma_1}$=0.015}} &
\scriptsize{\scriptsize{Err$_{\sigma_1}$=0.070}} & 
\scriptsize{\scriptsize{Err$_{\sigma_1}$=0.107}} & 
\scriptsize{\scriptsize{Err$_{\sigma_1}$=0.116}}  
\\
& \includegraphics[width=0.07\textwidth]{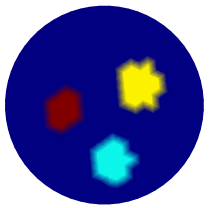} &
\includegraphics[width=0.07\textwidth]{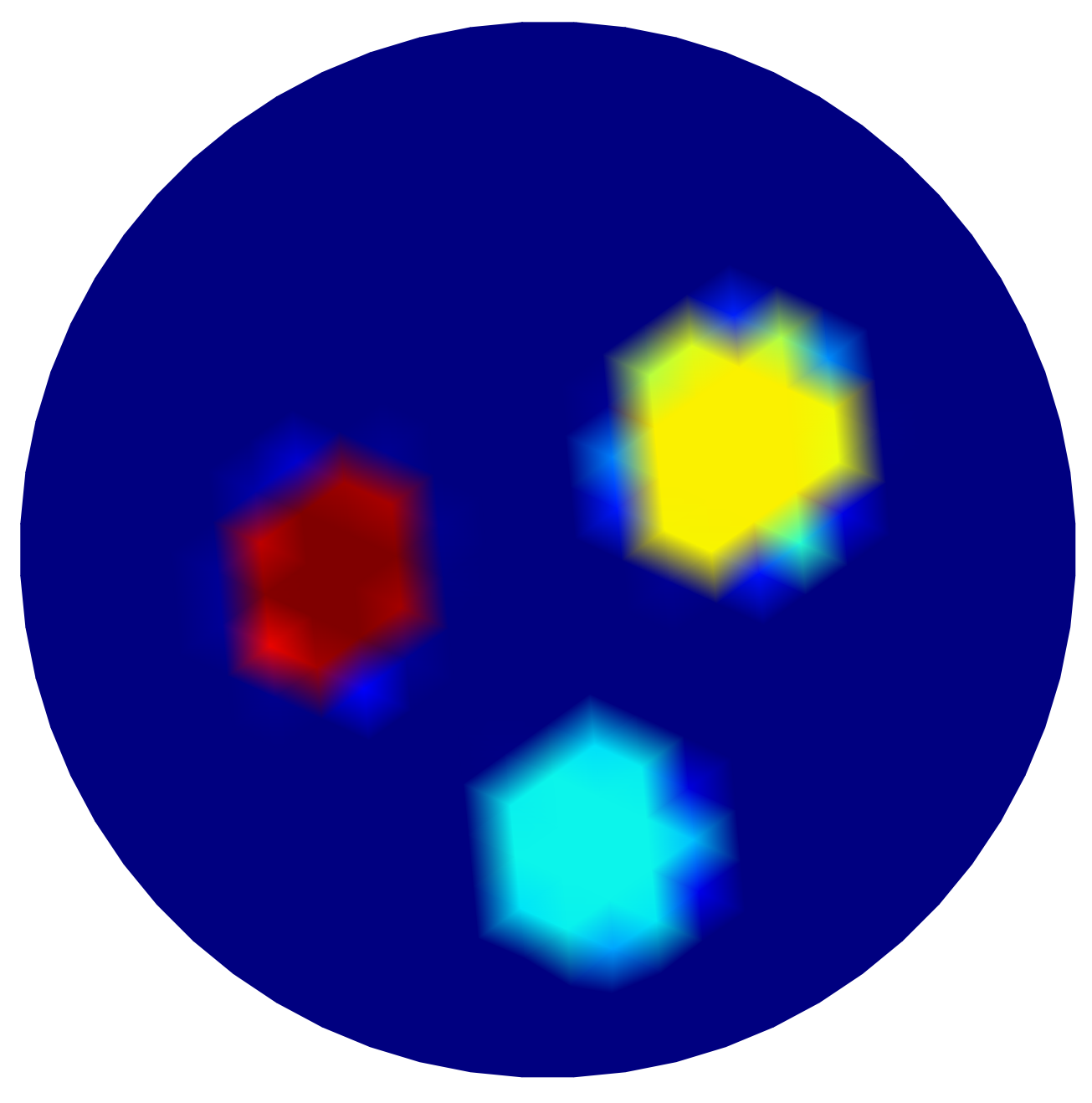} & 
\includegraphics[width=0.07\textwidth]{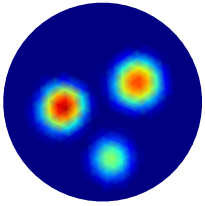} &
\includegraphics[width=0.07\textwidth]{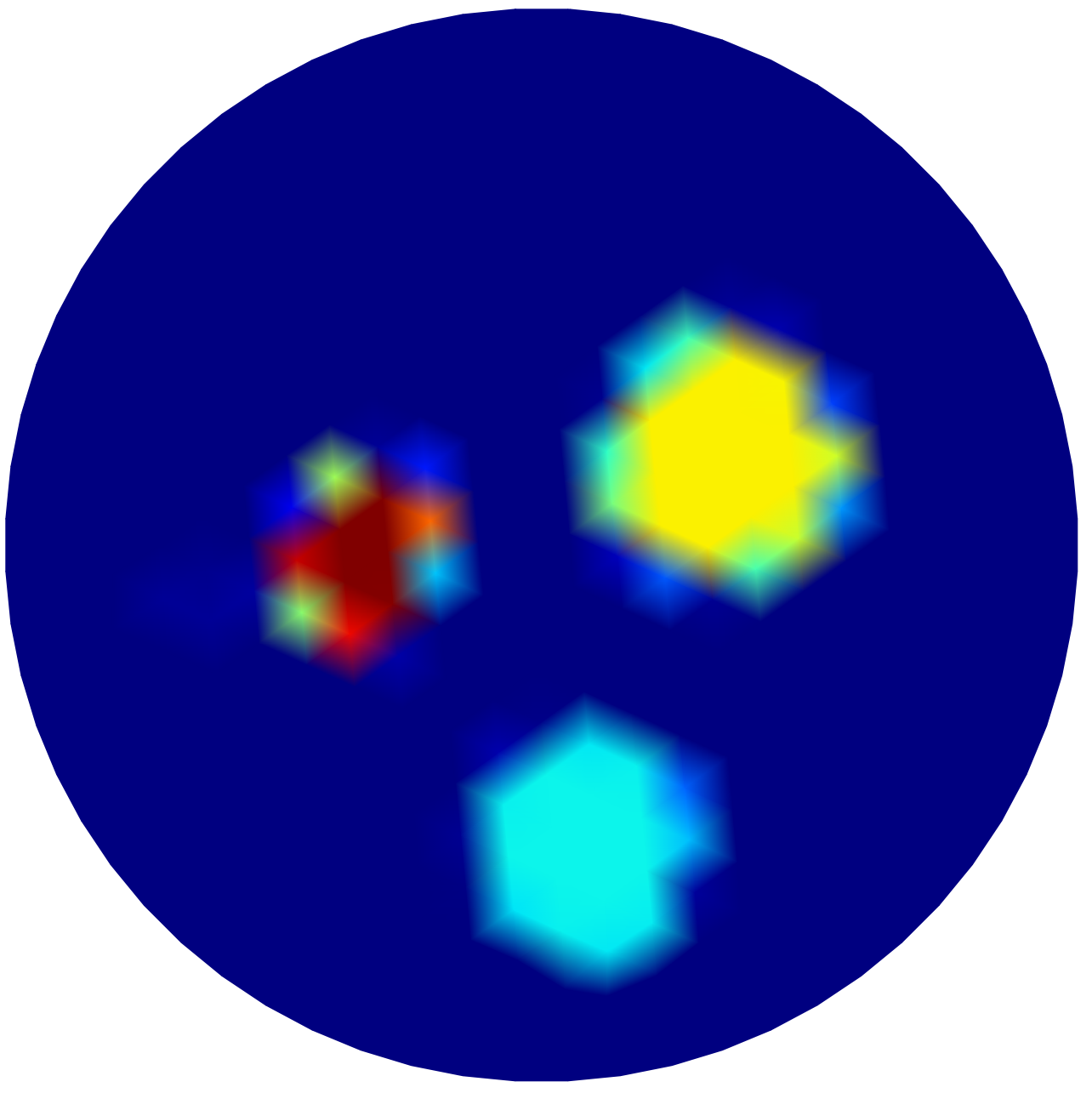} & 
\includegraphics[width=0.07\textwidth]{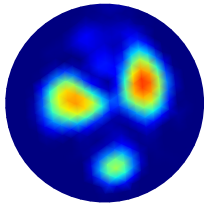} 
\\
\scriptsize{23}& &
\scriptsize{\scriptsize{Err$_{\sigma_2}$=0.048}} &
\scriptsize{\scriptsize{Err$_{\sigma_2}$=0.168}} & 
\scriptsize{\scriptsize{Err$_{\sigma_2}$=0.222}} & 
\scriptsize{\scriptsize{Err$_{\sigma_2}$=0.266}}  
\\
\hline
& \includegraphics[width=0.07\textwidth]{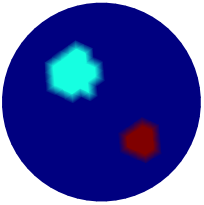} & 
\includegraphics[width=0.07\textwidth]{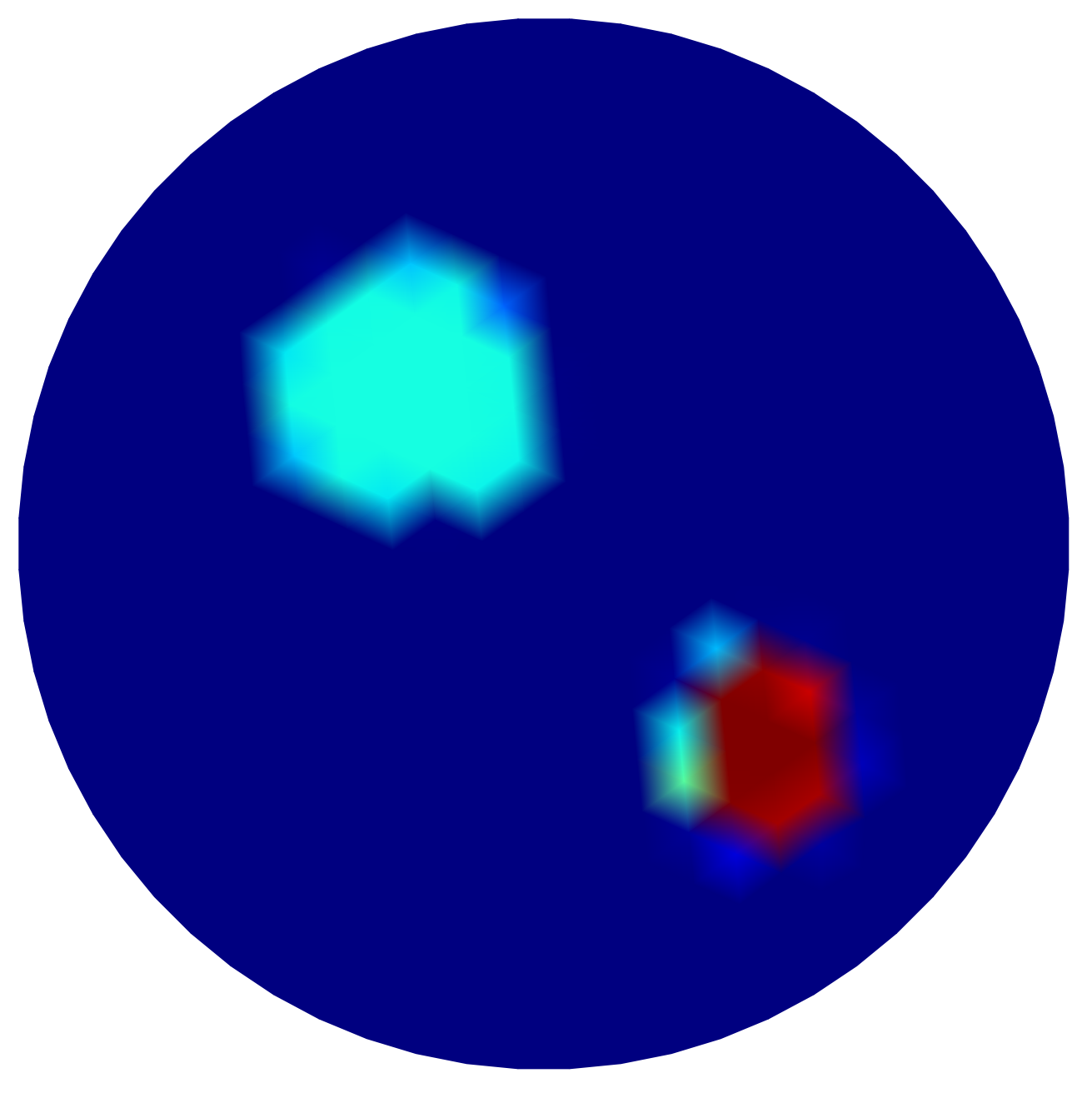} &
\includegraphics[width=0.07\textwidth]{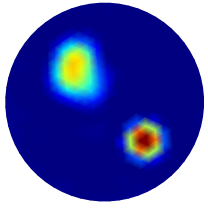} &
\includegraphics[width=0.07\textwidth]{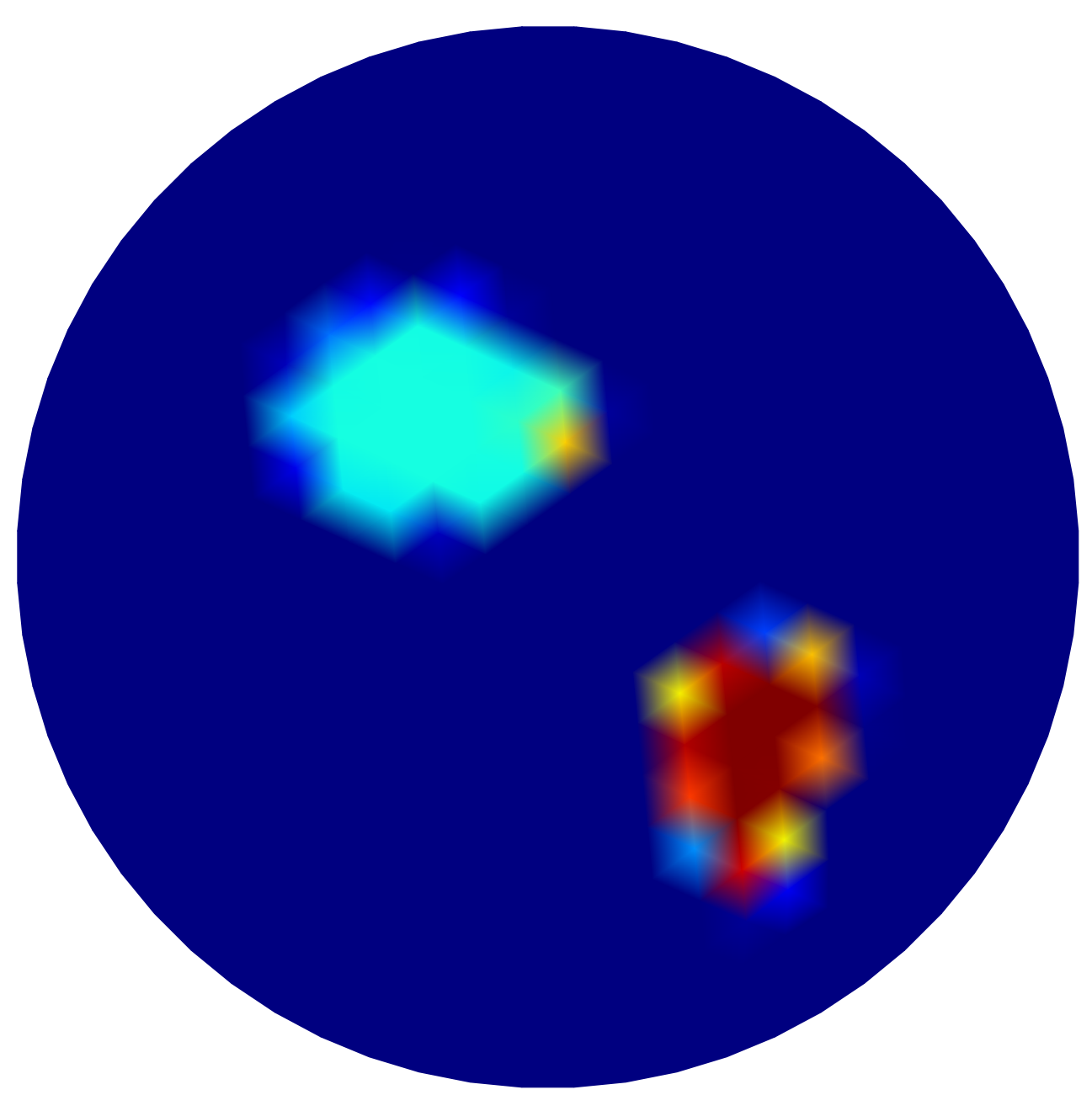} &
\includegraphics[width=0.07\textwidth]{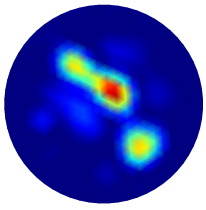} 
\\
& & \scriptsize{\scriptsize{Err$_{\sigma_1}$=0.042}} &
\scriptsize{\scriptsize{Err$_{\sigma_1}$=0.076}} & 
\scriptsize{\scriptsize{Err$_{\sigma_1}$=0.097}} & 
\scriptsize{\scriptsize{Err$_{\sigma_1}$=0.144}}

\\
& \includegraphics[width=0.07\textwidth]{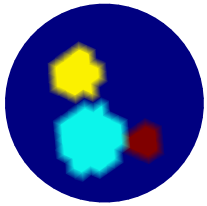} &
\includegraphics[width=0.07\textwidth]{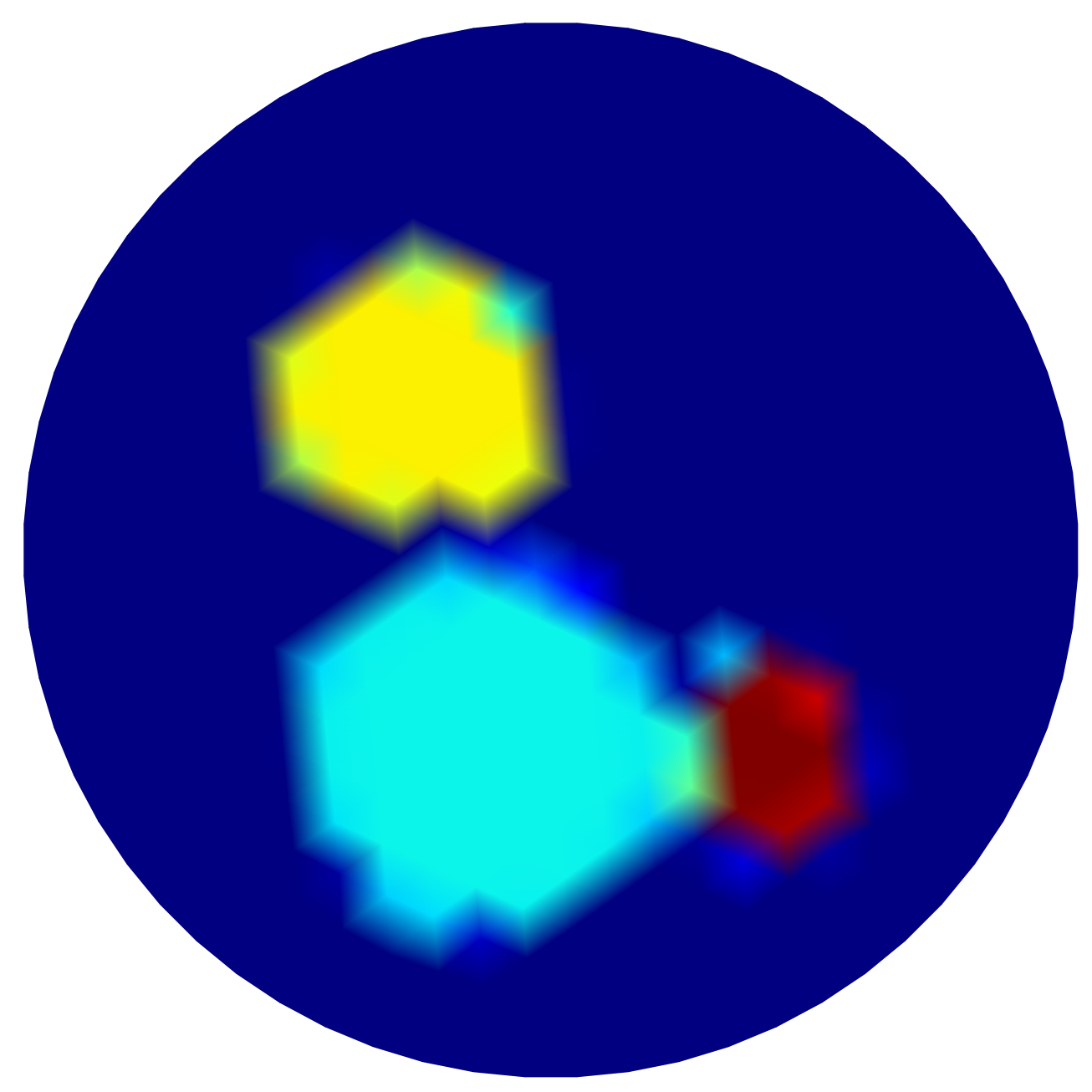} & 
\includegraphics[width=0.07\textwidth]{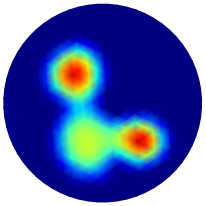} &
\includegraphics[width=0.07\textwidth]{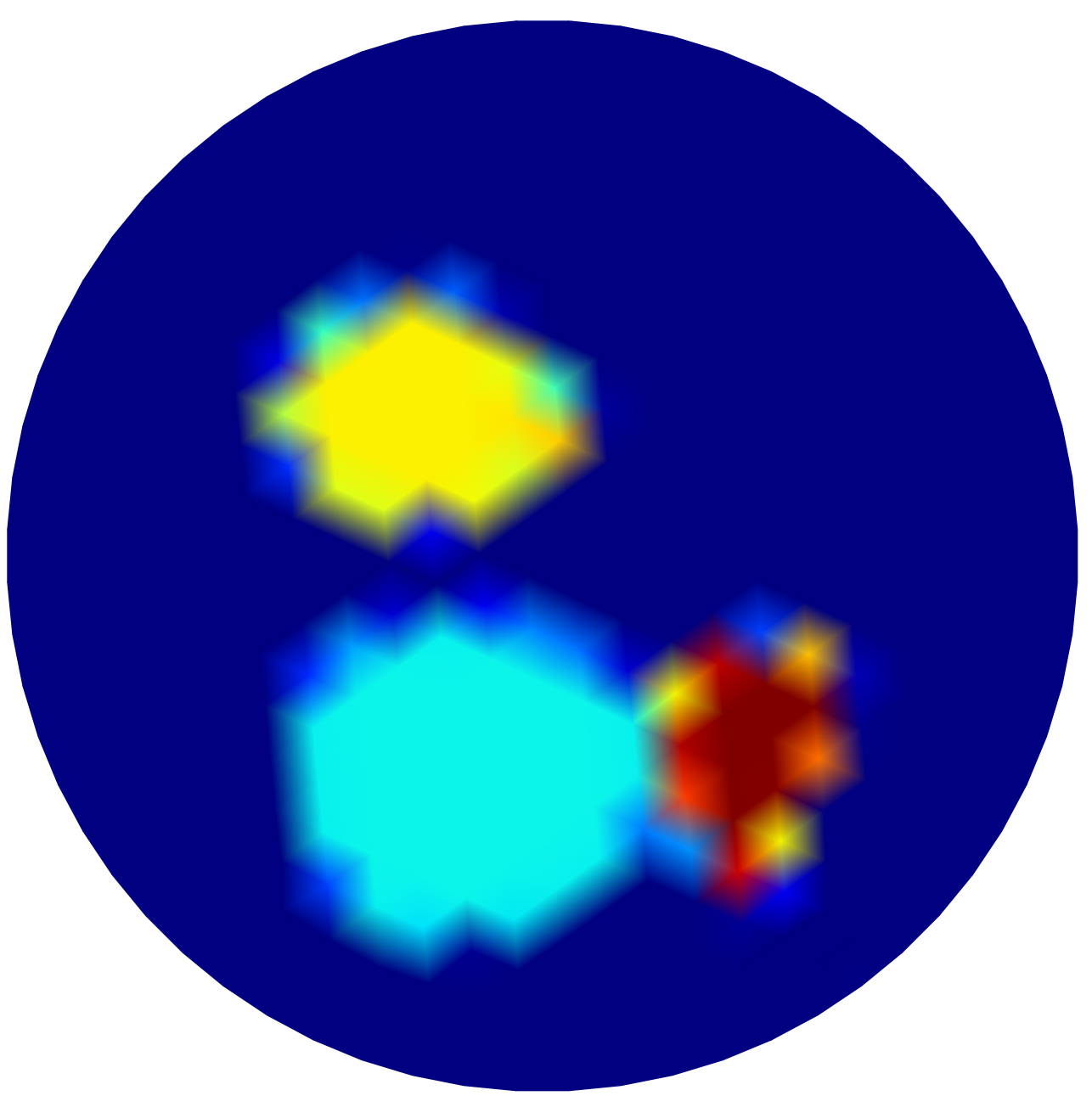} & 
\includegraphics[width=0.07\textwidth]{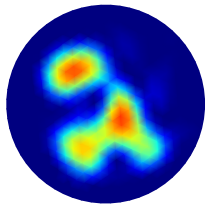} 
\\
\scriptsize{39}& & \scriptsize{\scriptsize{Err$_{\sigma_2}$=0.095}} &
\scriptsize{\scriptsize{Err$_{\sigma_2}$=0.178}} & 
\scriptsize{\scriptsize{Err$_{\sigma_2}$=0.207}} & 
\scriptsize{\scriptsize{Err$_{\sigma_2}$=0.296}}
\\
\hline
& \includegraphics[width=0.07\textwidth]{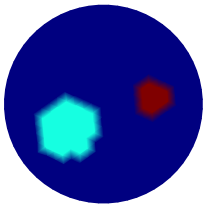} & 
\includegraphics[width=0.07\textwidth]{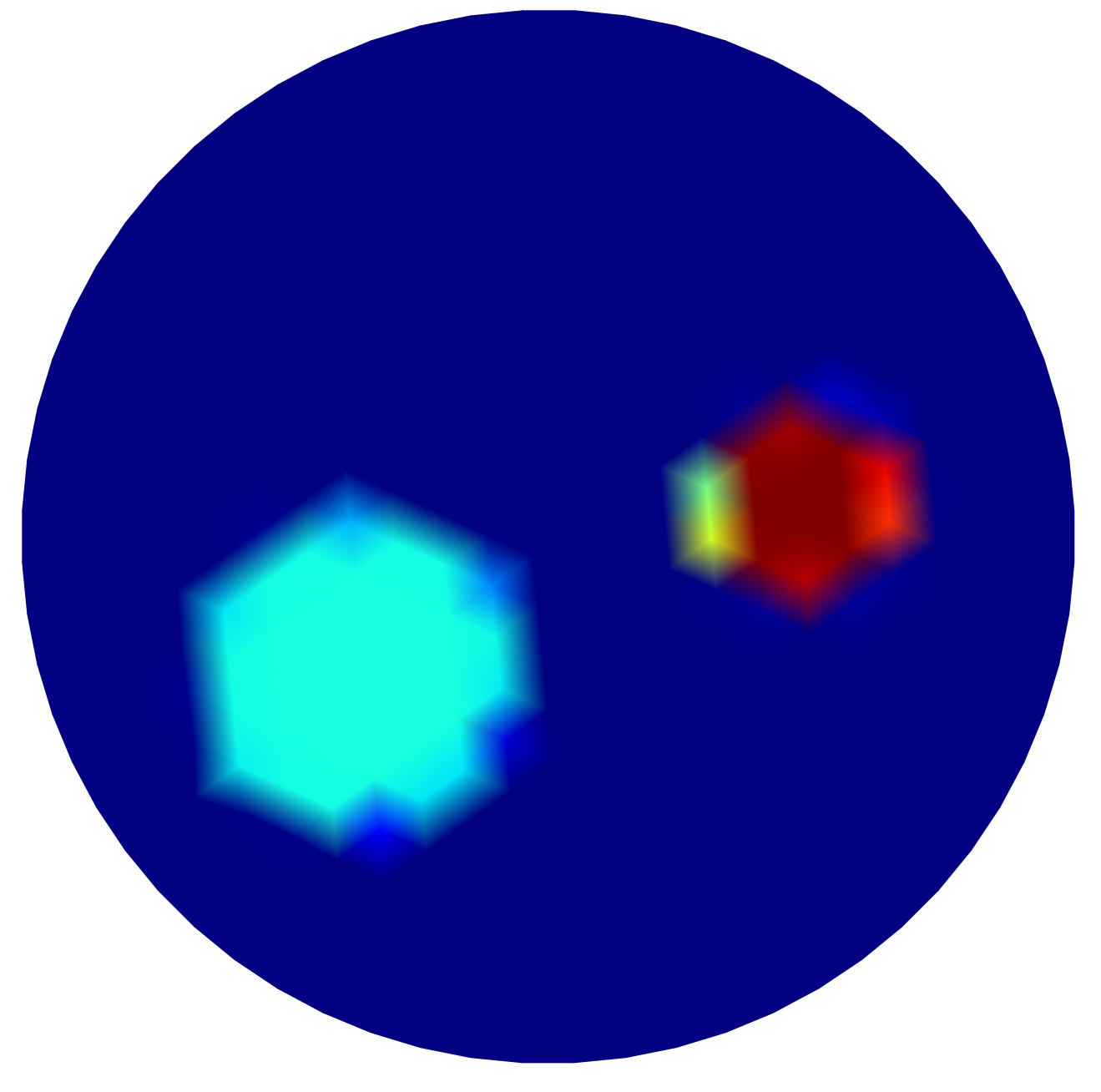} &
\includegraphics[width=0.07\textwidth]{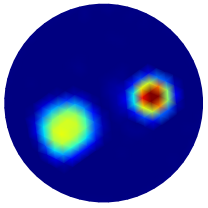} &
\includegraphics[width=0.07\textwidth]{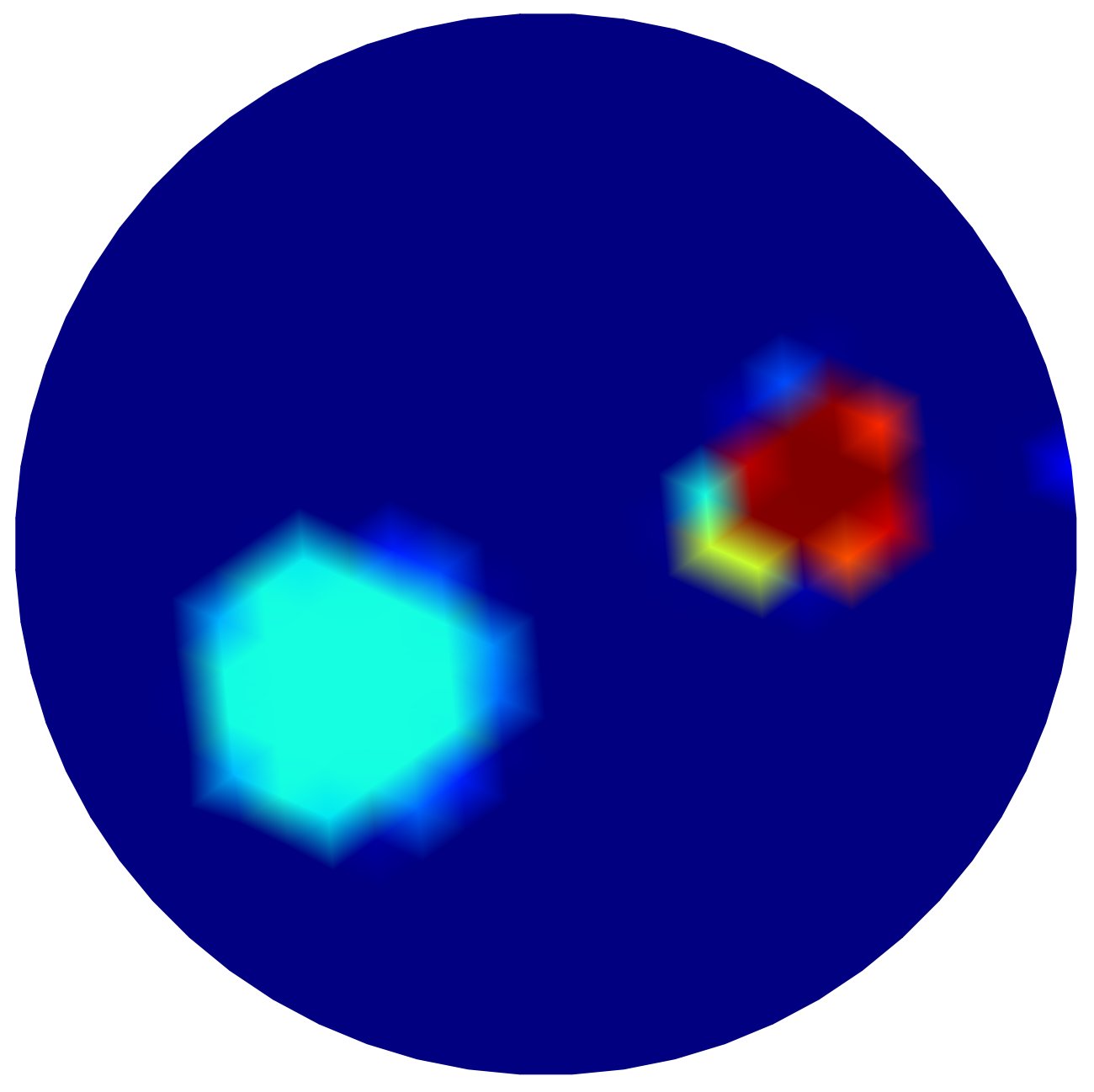} &
\includegraphics[width=0.07\textwidth]{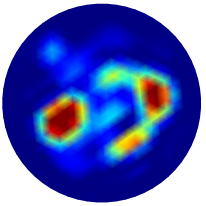} 
\\
& & \scriptsize{\scriptsize{Err$_{\sigma_1}$=0.034}} &
\scriptsize{\scriptsize{Err$_{\sigma_1}$=0.071}} & 
\scriptsize{\scriptsize{Err$_{\sigma_1}$=0.082}} & 
\scriptsize{\scriptsize{Err$_{\sigma_1}$=0.182}} \\
& \includegraphics[width=0.07\textwidth]{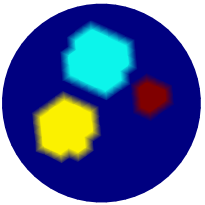} &
\includegraphics[width=0.07\textwidth]{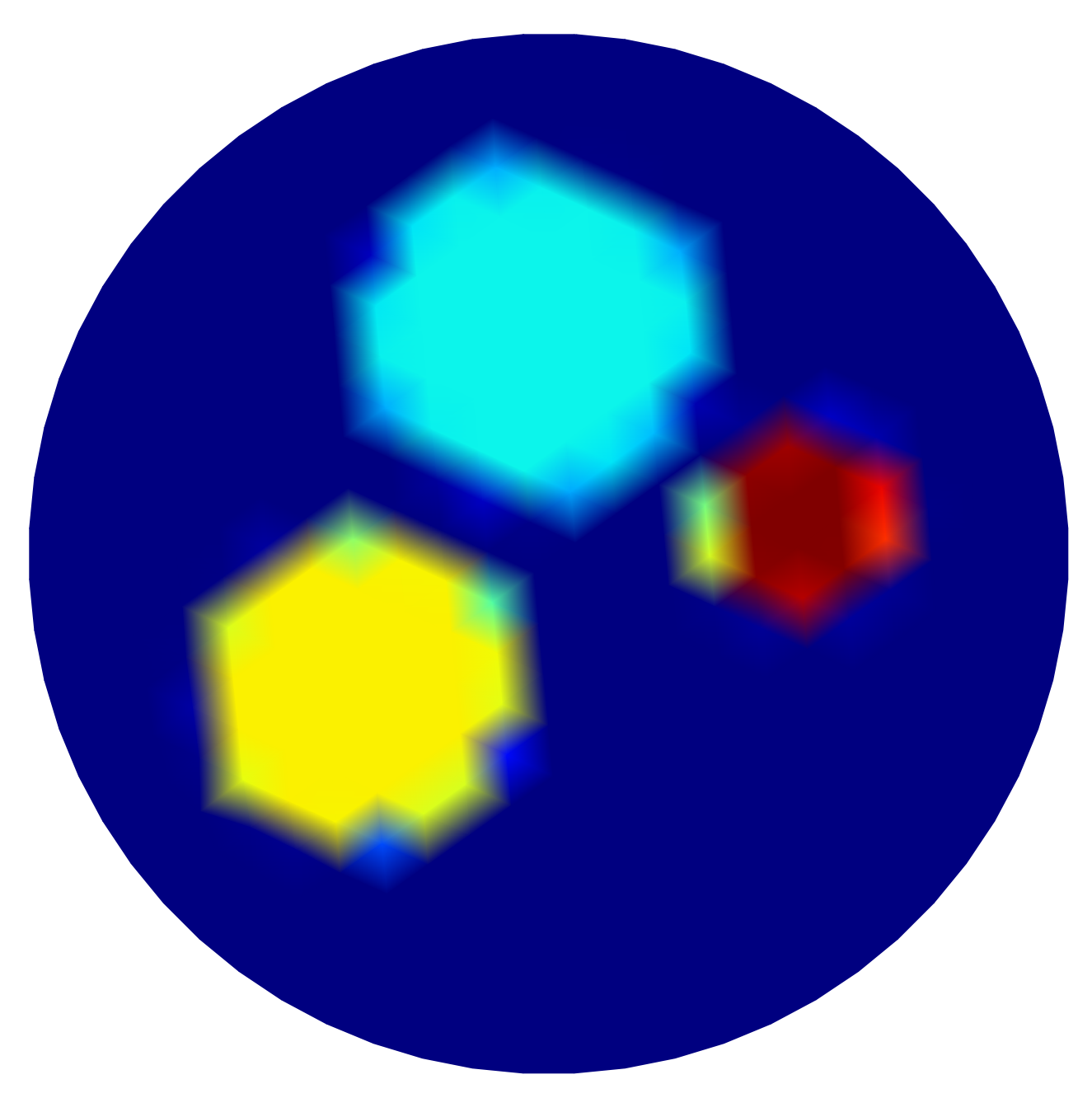} & 
\includegraphics[width=0.07\textwidth]{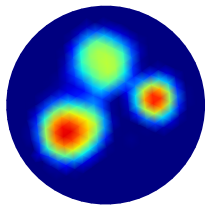} &
\includegraphics[width=0.07\textwidth]{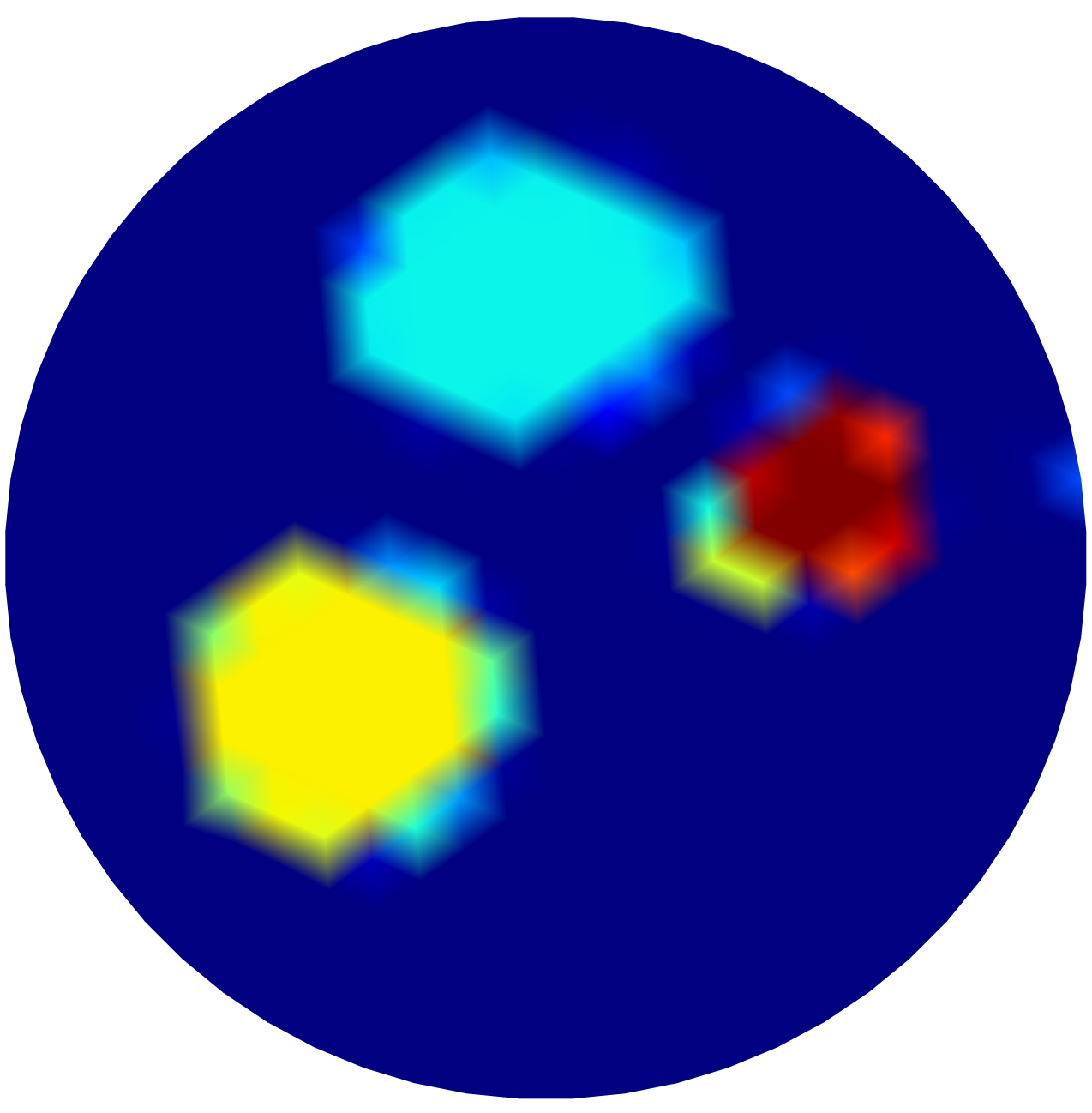} & 
\includegraphics[width=0.07\textwidth]{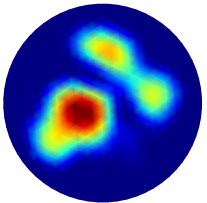} 
\\
\scriptsize{42}& & \scriptsize{\scriptsize{Err$_{\sigma_2}$=0.068}} &
\scriptsize{\scriptsize{Err$_{\sigma_2}$=0.176}} & 
\scriptsize{\scriptsize{Err$_{\sigma_2}$=0.186}} & 
\scriptsize{\scriptsize{Err$_{\sigma_2}$=0.304}} \\
\hline
\end{tabular}
\caption{Example 2: Conductivity reconstructions comparison among mf-Net and PnP on three test samples from the \texttt{No-Overlap} dataset ($T=4,M=2$).}
    \label{fig:PnP}
 \end{figure*}

Practical EIT measurements are inherently susceptible to noise, which can easily contaminate the acquired voltage data. To assess the robustness of our proposed method against such a perturbation, we introduced additive white Gaussian noise to the voltage measurements generated by our forward model.
In particular, according to the degradation model \eqref{eq:IP}, we generated the  noisy measurement vector $y$ 
	by adding to the noise-free  measurements $\widetilde{y}$  a vector	$ \eta\sim\mathcal{N}(0,\delta \overline{|\widetilde{y}|})$
	of Gaussian noise, where $\overline{|\widetilde{y}|}$ represents the average of the absolute measured voltage values, and $\delta \in \R_{++}$ the noise level. 
The measurements have been corrupted by a noise level $\delta=5 \times 10^{-3}$ which corresponds to SNR=54dB.

Table \ref{tab:table2} presents the average conductivity reconstruction values for our entire test dataset. We show results for noise-free test cases (first and second columns) and for degraded measurements (third and fourth columns), using both PnP and mf-Net trained exclusively on noise-free samples.
Figure \ref{fig:PnP} illustrates how degraded measurements affect the reconstruction of conductivities $\sigma_1$ and $\sigma_2$ for three selected test samples.
The results in Table \ref{tab:table2} and visual inspection of Fig.\ref{fig:PnP} both indicate that the mf-Net is significantly more robust to noise, partly due to its exploitation of the correlation in frequency-dependent measurements.

\begin{table}[!htbp]
    \centering
    \caption{Ablation Study}
    \label{tab:MSE_layer_actfun_hf}
    \begin{tabular}{c|c|c}
    \texttt{Iter Block K} & \texttt{$h_f$} & $\overline{Err}_F^K$\\
    \hline
    3 & 32 & $  0.3233 $ \\
    - & 64 & 0.3190 \\
     - & 128 & 0.3031 \\
    \hline
    5 & 32 & 0.2648 \\
    - & 64 & 0.2255\\
     - & 128 & 0.2776\\
    \hline
    7 & 32 & 0.2692 \\
    - & 64 & 0.2242\\
     - & 128 & 0.2521\\
    \hline
    \bf{9} & 32 & 0.2160 \\
    - & 64 &  0.2094  \\
     - & \bf{128} &\bf{0.1989} \\ 
      \end{tabular}
\end{table}

\subsection{Ablation studies}
\label{sec:AS}

In order to determine the optimal network architecture, three ablation studies were conducted. The first study focused on the number of iterative blocks \texttt{Iter Blocks K}, while the second and third studies were related to the GU-Net architecture. In particular, we analyzed the number of features in each hidden layer \texttt{$h_f$} and the depth \texttt{Depth}.
To assess the performance of mf-Net, we focused on scenarios with $M=2$ frequencies and $T=3$ tissues. 
In Table \ref{tab:MSE_layer_actfun_hf} we report the ablation results.

We created distinct mf-Net models, specifically mf-Net$_K$, where $K$ indicates the fixed number of blocks used during the deep iterative processing. We tested models with $K=3,5,7$, and $9$ blocks. A test dataset containing 50 samples was used for quantitative evaluations of these mf-Net$_K$ variants.
For each model, we compute
$$\overline{Err}_F^K= \frac{1}{T} \sum_{i=1}^T\overline{\rm Err}_{f_i}^K$$
where $\overline{\rm Err}_{f_i}^K$ represents the mean of the fraction recovery error of ${\rm Err}_{f_i}$ over all samples in the dataset. 

Our ablation study clearly shows how the number of layers $K$ and hidden features $h_f$
influence the error metric. As detailed in Table \ref{tab:MSE_layer_actfun_hf}, for a fixed GU-Net depth (3+3), increasing $K$ generally improves model performance by reducing the error. Similarly, increasing $h_f$  from 32 to 128 typically leads to better performance.

Optimal performance seems to occur  with $K = 9$ and $h_f$
values around 64 or 128, with an absolute lowest error of 0.1989  achieved with $h_f =128$. However, the modest improvement comes at the cost of significantly greater computational complexity (about $10\%$). 
Therefore, to prioritize computational efficiency in our experiments, we consider as sub-optimal baseline the one with $K=9$ and $h_f = 64$.

Established this baseline, we then investigated how increasing the encoder depth in the GU-Net autoencoder affects the error metric. We found that increasing the depth from 3+3 to 5+5 layers did not significantly alter the error, which shifted only slightly from 0.2094 to 0.2021. This allows us to confirm the baseline with $K=9$, $h_f = 64$ and \texttt{Depth}$=3+3$ (in the notation of Section~\ref{sec:GUNET}, this corresponds to $P=3$), which have been used for the experiments described in Section~\ref{sec:RE}. 

\section{Conclusion}\label{sec:conclusions}
We presented a model-based supervised learning method for mfEIT image reconstruction, named mf-Net. This approach leverages a fraction model for capturing complex intra-frequency correlations and overlapped inclusions. The proposed mf-Net variational network offers high efficiency, although it only approximately minimizes the underlying FR-PRGN variational model. Our numerical experiments, focusing on both fraction and conductivity mfEIT reconstructions, reveal that this approximate minimization consistently improves performance, and this enhanced data exploitation enables exceptionally accurate results. Looking ahead, we plan to validate our method on real-world experiments and to extend this approach to 3D multifrequency EIT imaging.

\section*{Appendix}

\begin{proof}[Proof of Lemma \ref{lem:entropy}]
We follow the same technique as in \cite[Proposition 5.1]{B2008}. By the definition of $\psi$ in \eqref{eq:entropy}, we have
\[
\langle \nabla \psi(F)- \nabla \psi(G),F-G \rangle = \sum_{n,j}(f_{nj}-g_{nj})\ln \frac{f_{nj}}{g_{nj}}.
\]
Since the function $\varphi\colon \R \rightarrow \R$ given by $\psi(t) = (t-1)\ln(t)-2\frac{(t-1)^2}{t+1}$ is non-negative, considering $t = \frac{f_{nj}}{g_{nj}}$ we get
\[
 \sum_{n,j}(f_{nj}-g_{nj})\ln \frac{f_{nj}}{g_{nj}}
 \geq \sum_{n,j} \frac{2(f_{nj}-g_{nj})^2}{f_{nj}+g_{nj}}.
\]
For any $n=1,\ldots,N$, consider now that
\[
\begin{aligned}
    \left(\sum_{j}|f_{nj}-g_{nj}|\right)^{\!\!2}
    &= \!\left(\sum_{j} \left(\frac{f_{nj}+g_{nj}}{2}\right)^{\!\!\frac{1}{2}} \frac{|f_{nj}-g_{nj}|}{ \left(\frac{f_{nj}+g_{nj}}{2} \right)^{\!\!\frac{1}{2}} } \right)^{\!\!2} \\
    &\leq \sum_j \frac{f_{nj}+g_{nj}}{2} \sum_j \frac{2(f_{nj}-g_{nj})^2}{f_{nj}+g_{nj}}
\end{aligned}
\]
and since $\Gamma$ is convex we have $\frac{F+G}{2} \in \Gamma$, thus $\sum_{j}\frac{f_{nj}+g_{nj}}{2}=1$ for all $n$. In conclusion,
\[
\begin{aligned}
\| F-G\|_1^2 & \leq N \sum_{n} \left(\sum_{j}|f_{nj}-g_{nj}|\right)^2 \\
& \leq N \langle \nabla \psi(F)- \nabla \psi(G),F-G \rangle.    
\end{aligned}
\]
To obtain the expression \eqref{eq:entropystar}, we start from the definition of the Fenchel conjugate, namely:
\[
\begin{aligned}
\psi^\star(F) &= \max_{G} \left\{ \langle G,F \rangle -\psi(G)\right\}\\
&= \max_{G \in \Gamma} \left\{ \sum_{n,j} g_{nj}f_{nj} - g_{nj}\ln(g_{nj}) \right\}.
\end{aligned}
\]
We introduce $N$ Lagrange multipliers $\mu_n$ associated with the constraints $\sum_{j}g_{nj} = 1$ and tackle the unconstrained problem
\[
\max_{G,\mu_1,\ldots,\mu_N}\!\! \left\{ \!\sum_{n,j} g_{nj}f_{nj} - g_{nj}\ln(g_{nj}) + \!\sum_n \mu_n \!\!\left(\!\sum_j g_{nj}-1\!\right)\!\!\right\}.
\]
Imposing first-order optimality conditions, we get
\[
\begin{aligned}
    f_{nj} - \ln(\hat{g}_{nj}) - 1 + \hat{\mu}_n &= 0 \quad \forall n,j \\
\sum_j \hat{g}_{nj} &= 1 \quad \forall n
\end{aligned}
\]
from which we deduce the expression of the maximum point
\[
\begin{aligned}
    \hat{g}_{nj} = e^{f_{nj}+\hat{\mu}_n-1}, \quad \hat{\mu}_n = 1-\ln\left(\sum_{j} e^{f_{nj}}\right)
\end{aligned}
\]
and, finally,
\[
\begin{aligned}
    \psi^\star(F) &= \sum_{n,j} \hat{g}_{nj}(f_{nj}-\ln(\hat{g}_{nj})) = \sum_{n}(1-\hat{\mu}_n)
\end{aligned}
\]
from which we get the desired expression.
\end{proof}

\section[*]{Acknowledgements}
The research of SM has been funded by PNRR CN-HPC, under the NextGeneration EU program CUP J33C22001170001. 
The work of SM, DL, and LR was in part supported by INDAM-GNCS 2025 projects, PRIN2022\_MORIGI, titled "Inverse Problems in the Imaging Sciences (IPIS)" 2022 ANC8HL - CUP J53D23003670006, and PRIN2022\_PNRR\_CRESCENTINI CUP J53D23014080001.
The research of LR has been funded by PNRR - M4C2 - Investimento 1.3. Partenariato Esteso PE00000013 - ``FAIR - Future Artificial Intelligence Research'' - Spoke 8 ``Pervasive AI'', which is funded by the European Commission under the NextGeneration EU programme. 
Co-funded by the European Union (ERC, SAMPDE, 101041040 – Next Generation EU, Missione 4 Componente 1 CUP D53D23005770006 and CUP D53D23016180001) and by the MIUR Excellence Department Project awarded to Dipartimento di Matematica, Università di Genova, CUP D33C23001110001. This material is based upon work supported by the Air Force Office of Scientific Research under award number FA8655-23-1-7083.

\bibliographystyle{abbrv}
\bibliography{refs}

\end{document}